\pgfplotsset{compat=newest}
\newcommand{\ee}{\mathrm{e}}
\newcommand{\RR}{\mathbb{R}}
\newcommand{\CC}{\mathbb{C}}
\newcommand{\ZZ}{\mathbb{Z}}
\newcommand{\bb}{\boldsymbol}
\newcommand{\bx}{\bb x}
\newtheorem{remark}{Remark}[section]
\newtheorem{theorem}{Theorem}[section]
\newcommand{\keywords}[1]{\noindent \textbf{\textit{Keywords:}} #1}
\newcommand{\msccodes}[1]{\noindent \textbf{\textit{MSC codes (2020):}} #1}
\title{Accelerating exponential integrators to efficiently solve\\
  {\color{black}semilinear} advection-diffusion-reaction equations}
\author{Marco Caliari\footnote{\noindent Department of Computer Science, University of
        Verona, Verona, Italy.\newline
        \textit{E-mail address}: \texttt{marco.caliari@univr.it}}
        \and Fabio Cassini\footnote{Department of Computer Science, University of
        Verona, Verona, Italy.\newline
        \textit{E-mail address}: \texttt{fabio.cassini@univr.it}}
        \and Lukas Einkemmer\footnote{Department of Mathematics, University of
        Innsbruck, Innsbruck, Austria.\newline
        \textit{E-mail address}: \texttt{lukas.einkemmer@uibk.ac.at}}
        \and Alexander Ostermann\footnote{Department of Mathematics, University of
        Innsbruck, Innsbruck, Austria.\newline
        \textit{E-mail address}: \texttt{alexander.ostermann@uibk.ac.at}}
}
\begin{document}
  \maketitle
\begin{abstract}
In this paper we consider an approach to improve the performance of exponential
{\color{black}Runge--Kutta integrators and Lawson schemes} in cases where the 
solution of a related, but
usually much simpler, problem can be computed efficiently. While for implicit
methods such an approach is common (e.g.~by
using preconditioners), for exponential integrators this has proven more
challenging. Here we propose to extract a constant coefficient differential
operator from the {\color{black}semilinear} advection-diffusion-reaction equation
for which, {\color{black}in many situations, efficient methods are known
  to compute the required matrix functions.} Both a
linear stability analysis and {\color{black} extensive} numerical experiments 
show that the resulting
schemes can be unconditionally stable. In fact, we find that exponential
integrators {\color{black}of Runge--Kutta type} and Lawson schemes can 
have better stability properties than
similarly constructed implicit-explicit schemes. We also {\color{black} derive two}
new Lawson type integrators that further improve on these stability properties.
The {\color{black} overall} effectiveness of the approach is highlighted by a 
number of {\color{black} performance comparisons on} examples in two and 
three space dimensions.
\end{abstract}

\keywords{Exponential integrators, stiff problems, evolutionary PDEs,
  {\color{black}semilinear} advection-diffusion-reaction equations, fast Fourier
  transform based solvers, $\mu$-mode integrator.}

\medskip
\msccodes{65L05, 65M06, 65M20, 65M70}
\section{Introduction}

Solving time dependent Partial Differential Equations (PDEs) efficiently is
important for better understanding many phenomena in science and engineering. In
most cases, in particular for problems which are diffusion dominated,
discretizing such PDEs in space results in a very stiff set of Ordinary
Differential Equations (ODEs) for which explicit integrators have to take
extremely small time steps.

The standard approach to reduce the computational effort for such problems is
to use implicit methods. {\color{black}These kinds} of methods, from a theoretical point of
view, can have very attractive properties. In particular, many implicit methods
are \emph{unconditionally stable}. {\color{black}Since in this paper
we are mostly interested in diffusion dominated problems}, we consider methods that are
unconditionally stable in the sense that they do not have a time step
restriction when applied to the linear problem
\begin{equation*}
  u'(t)=Au(t),
\end{equation*}
where $A$ is a matrix with negative eigenvalues.
That is, the time step size is only dictated by the
prescribed tolerance (i.e., accuracy), but not by stability considerations.
However, implicit methods in general require the solution of a nonlinear set
of equations at each time step. This is usually done either by performing Newton
iteration in combination with some linear solver
or by directly using an IMplicit-EXplicit (IMEX) scheme, which treats
implicitly only the linear part of the system.
Thus, using implicit methods shifts all the difficulty into finding an
efficient way to solve dense linear systems of
moderate size (such as those coming from pseudospectral techniques)
or large and sparse ones
(such as those stemming from finite differences or finite elements spatial
discretizations).
When the arising linear
system is solved by a direct method such as LU decomposition
the computational complexity
of the obtained algorithm is usually worse than that of an explicit
integrator. Otherwise, Krylov subspace based iterative methods are used,
such as the
Conjugate Gradient (CG) method or the Generalized Minimal RESidual method
(GMRES)~\cite{saad2003iterative}. For diffusion dominated problems Krylov
methods already significantly reduce the computational effort needed. However,
for a number of problems preconditioners can be constructed that further
reduce {\color{black}it}. Preconditioners transform the linear system
in such a way that the solution of the resulting linear system requires fewer
iterations. While there are preconditioners that, at least in principle, can
be applied generically (e.g.~ILU or the celebrated algebraic multi-grid
approach~\cite{xu2017algebraic}), for many challenging problems purpose-built
preconditioners have to be constructed. Often such preconditioners use the
solution of a related, but simpler, problem that can be solved efficiently
(see, e.g., physics based preconditioners~\cite{Chacon2002,Reynolds2010} or
block preconditioners~\cite{badia2014block,cyr2016teko}).

Exponentials integrators (see reference~\cite{HO10} for a review) are
another way to
solve in time stiff differential equations
{\color{black}in which} a linear part of the problem is
treated \emph{exactly}, while the nonlinear remainder is treated in an
explicit manner. 
{\color{black} In this work, we focus on exponential integrators of Runge--Kutta
type~\cite{HO05} and on Lawson schemes~\cite{lawson} (i.e., methods that 
just involve matrix exponentials but not related functions)}.
For example, to solve the equation
\[ \partial_t u(t) = A u(t) + g(t, u(t)) \]
it is possible to use the exponential Euler method
\begin{equation*}
u^{n+1}=u^{n}+\tau\varphi_{1}(\tau A)(A u^{n} + g(t_n, u^n))
\end{equation*}
{\color{black} or the Lawson--Euler scheme
\begin{equation*}
u^{n+1}=\ee^{\tau A}(u^{n}+\tau g(t_n, u^n)).
\end{equation*}
}%
Here $A$ is a linear operator/matrix, $\tau$ is the time step size, and
$\varphi_1(z) = (\ee^z - 1)/z$ is an entire function (the singularity at $0$ is
removable). Even for nonlinear problems,
exponential integrators require no Newton iteration, but rather
 the computation of the action of matrix functions related to the
exponential (such as the $\varphi_1$ function above) at each time step.
The main difficulty for exponential integrators is the evaluation of
{\color{black}such} exponential-like matrix
functions. For that purpose Krylov subspace methods~\cite{hochbruck1997krylov},
schemes based on Leja interpolation~\cite{CKOR16,caliari2004}, and Taylor
methods~\cite{al2011computing} are commonly used.
It has been shown in a number of studies that Krylov or Leja based
exponential integrators can be superior to implicit schemes for realistic
problems, see, e.g., references~\cite{clancy2013use,deka2022exponential,einkemmer2017performance,loffeld2013comparative}.
Unfortunately, however, the aforementioned technique of preconditioning
to further improve
performance does not apply to exponential integrators.

Thus, a natural question that arises is whether the solution of a related, but
simpler, problem for which an efficient solution is known can be used in the
framework of exponential integrators in order to improve the performance of
the scheme. The idea that we pursue in this paper is to choose the linear
operator $A$ in such a
way that the corresponding matrix functions can be computed efficiently.
In particular, {\color{black} we are interested in
(diffusion dominated) variable coefficients semilinear advection-diffusion-reaction
equations in the following form}
\begin{equation*}
  \partial_tu(t,\bx)=\nabla\cdot\left(a(\bx)\nabla u(t,\bx)\right)+
  \nabla\cdot\left(\bb b(\bb x)u(t,\bb x)\right)+
  r(t,\bx,u(t,\bx)),
    \quad  t\in[0,T],\quad \boldsymbol{x}\in\Omega\subset\RR^d,
\end{equation*}
equipped with appropriate initial and boundary conditions.  If we use
\begin{equation*}
  Au = \nabla \cdot \left(a(\bx) \nabla u\right)+
  \nabla\cdot\left(\bb b(\bb x)u\right)
\end{equation*}
  then we
have to resort to
general purpose Krylov or Leja based schemes, for instance,
to compute the actions of {\color{black}$\ee^{\tau A}$ and}
$\varphi_1(\tau A)$.
However, for constant diffusion tensor and velocity
field (i.e., if $a$ and $\bb b$ are independent of
$\bx$) we can use a fast Poisson solver, e.g.~Fast Fourier Transform (FFT)
based methods. This scales linearly
(up to a logarithm) in the number of unknowns. The idea is then to consider
the following \emph{equivalent} formulation of the equation
\begin{equation*}
    \partial_tu(t,\bx)=A u(t,\bx)
    +\underbrace{\nabla\cdot(a(\bx)\nabla u(t,\bx))
+  \nabla\cdot\left(\bb b(\bb x)u(t,\bb x)\right)
      -A u(t,\bx)+r(t,\bx,u(t,\bx))}_{g(t,\bx,u(t,\bx))},
\end{equation*}
where now $A$ is a constant coefficient approximation to the original
advection-diffusion operator. The hope is that
the actions of {\color{black}$\ee^{\tau A}$ and} $\varphi_1(\tau A)$ can
stabilize the nonlinear term $g(t,\bx,u(t,\bx))$ of the
exponential integrator. The choice of $A$ is clearly not uniquely determined
and we will consider how to choose it later in the paper. {\color{black} We remark,
  however, that $A$ has to be chosen in such a way that we can efficiently
  compute the corresponding matrix functions.}%

{\color{black}Let us note that a related approach to what we pursue in this paper
leads to split or partitioned exponential
integrators~\cite{NS21,NTST19,RT14}, which has also been
applied to non-diffusive
equations~\cite{BM22}. The idea of employing a constant coefficient operator can
be considered in the context of implicit methods as well. In particular,} an
IMEX scheme can be used. The
approximation operator is treated implicitly (which can be done efficiently
using, e.g., Fourier methods), while the nonlinear term
is treated explicitly.
Such schemes, known in the literature as explicit-implicit-null,
were considered for instance in reference~\cite{WZWS20}.
Clearly,
the class of operators for which efficient linear solvers are known is
not equivalent to the class of operators for which the matrix exponential
(and related matrix functions) can be computed efficiently.
{\color{black}For example, it is quite involved to build a linear solver if a
matrix has $d$-dimensional Kronecker sum form~\cite{CK20}. Instead, 
this structure can be more easily exploited for the matrix exponential
case~\cite{CCEOZ22,CCZ22b}.
Indeed, thanks to the equivalence
\begin{equation*}
  \ee^{A_d\oplus A_{d-1}\oplus\cdots\oplus A_1}\bb v =
  \mathrm{vec}\left(\bb V \times_1 \ee^{A_1} \times_2 \cdots \times_d \ee^{A_d}\right),
  \quad
  \bb V \in \CC^{N_{x_1}\times \cdots \times N_{x_d}}, \quad \bb v = \mathrm{vec}(\bb V),
\end{equation*}
we are able to compute the action of the exponential of a matrix in Kronecker sum form just
in terms of the small sized matrix exponentials $\ee^{A_\mu}$ and
tensor-matrix $\mu$-mode products (here denoted with the symbol $\times_\mu$) using
$d$ calls to level 3 BLAS. This idea leads to the so-called
$\mu$-mode integrator, with dramatic improvements in terms of efficiency
of computation.}
On the other hand, if for a specific operator a good preconditioner is known,
an efficient linear solver can be constructed, while the same is not
necessarily true for computing the corresponding matrix functions.

Let us also mention some related work in the context of computing matrix
functions. In reference~\cite{CS98} the fact that the matrix exponential
can be written
as the solution of a linear differential equation is used to decompose the
problem into a ``preconditioner'' and a correction. An exponential integrator is
then used to approximate the resulting system. The preconditioners chosen are
all purely algebraic in nature (diagonal, block diagonal, etc.) and are not
specifically adapted to the underlying PDE.
In reference~\cite{van2006preconditioning}
a rational Krylov subspace is constructed. That is, the inverse of the
preconditioner is directly built into the Krylov subspace in order to obtain
an approximation space that is well behaved. This approach was later extended
to trigonometric operators \cite{grimm2008rational}.

In this paper we will first perform a linear stability analysis of the
proposed approach. In particular, this will show that exponential integrators
can have better stability properties than their corresponding IMEX
counterparts. We will also propose a class of methods (called stabilized
Lawson schemes) that improve on the stability properties of the classic
Lawson schemes. We investigate the choice of {\color{black}the operator} $A$ and show that this can have a
drastic influence on performance. We note that in reference~\cite{WZWS20},
in the context
of IMEX schemes, the authors employ the
smallest amount of
diffusion in $A$ that is necessary to guarantee stability. We do observe
a similar behavior for some test examples. However, for different two-
and three-dimensional {\color{black}semilinear} advection-diffusion-reaction equations that we investigate
this is not the case. In this context we {\color{black} state our proposal,
also based on strong numerical evidence,} to select $A$ in a good way.
We then perform a number of numerical investigations
with appropriate techniques for evaluating the
matrix functions in order to highlight the efficiency of the proposed approach.
Finally, we show that {\color{black}the investigated accelerated exponential methods} can
outperform their IMEX counterparts in a number of situations.
\section{Linear stability analysis}\label{sec:linstaban}
As mentioned in the introduction, we investigate here how
to determine the approximation operator $A$ {\color{black}for exponential 
Runge--Kutta and Lawson methods} by studying a model equation.
In particular, we consider the constant coefficient heat equation
\begin{equation*}
\partial_tu=\Delta u
\end{equation*}
on the domain $\Omega=(-\pi,\pi)^{d}$ with periodic boundary conditions, and we
equivalently write
\begin{equation}\label{eq:modeleqstab}
\partial_tu=\lambda\Delta u + (1-\lambda)\Delta u,
\end{equation}
with $\lambda\in[0,1]$.
Note that in this case the approximation operator is simply
\begin{equation*}
A=\lambda\Delta.
\end{equation*}
We note that a similar analysis can be performed for more general operators as
long as the eigenvalues lie on the negative real axis.
Then, in order to determine the parameter $\lambda$ we perform a linear
stability analysis of the temporal exponential integrator in Fourier space,
{\color{black} which appears to be novel in the literature (see
reference~\cite[Sec. 4.1]{DASS20} for a similar investigation, in a different
context, for the exponential Euler method only)}.
For the convenience of the reader, all the schemes mentioned and studied in
this section are collected in the appendix (formulated for a generic abstract
semilinear ODE).

Let us first consider the well-known exponential Euler method, which integrates
equation~\eqref{eq:modeleqstab}
in time as follows
\begin{equation}\label{eq:expeulerstab}
u^{n+1}=u^{n}+\tau\varphi_{1}(\tau\lambda\Delta)\Delta u^{n},
\end{equation}
where $u^n$ is the numerical solution at time $t_n$, $\tau$ is the time
step size and $\varphi_1(z)=(\ee^z-1)/z$. Here and throughout the paper we
assume without loss of generality
that $\tau$ is constant. Then we have the following result.
\begin{theorem}\label{thm:expeuler}
The exponential Euler scheme~\eqref{eq:expeulerstab}
is unconditionally stable for $\lambda\geq\lambda^{\textsc{ee}}=1/2$.
\end{theorem}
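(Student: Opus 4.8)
The plan is to diagonalize the scheme in Fourier space and reduce the stability question to a scalar inequality in the single parameter that encodes the eigenvalue of the Laplacian. Since the domain is $(-\pi,\pi)^d$ with periodic boundary conditions, the eigenfunctions of $\Delta$ are the Fourier modes $\ee^{\mathrm{i}\bb k\cdot\bx}$ with eigenvalues $-|\bb k|^2 \le 0$ for $\bb k\in\ZZ^d$. Writing $u^n$ in terms of its Fourier coefficients, the update~\eqref{eq:expeulerstab} decouples mode by mode, and on the mode with Laplacian eigenvalue $-\mu$ (where $\mu = |\bb k|^2 \ge 0$) the exponential Euler step becomes multiplication by the amplification factor
\begin{equation*}
  R(\mu) = 1 + \tau\,\varphi_1(-\tau\lambda\mu)\,(-\mu)
         = 1 - \tau\mu\,\frac{\ee^{-\tau\lambda\mu}-1}{-\tau\lambda\mu}
         = 1 - \frac{1-\ee^{-\tau\lambda\mu}}{\lambda}.
\end{equation*}
Unconditional stability means $|R(\mu)|\le 1$ for all $\mu\ge 0$ and all $\tau>0$; since $R$ depends on $\tau$ and $\mu$ only through the product $z=\tau\mu\ge 0$, it suffices to show $|R|\le 1$ for all $z\ge 0$, where now $R(z) = 1 - (1-\ee^{-\lambda z})/\lambda$.

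Next I would analyze this scalar function. As $z$ ranges over $[0,\infty)$, the quantity $\ee^{-\lambda z}$ decreases monotonically from $1$ to $0$ (for $\lambda>0$), so $R(z)$ decreases monotonically from $R(0)=1$ to the limiting value $R(\infty) = 1 - 1/\lambda$. Hence the upper bound $R(z)\le 1$ holds automatically, and the binding constraint is the lower bound: we need $1 - 1/\lambda \ge -1$, i.e.\ $1/\lambda \le 2$, i.e.\ $\lambda \ge 1/2$. Conversely, for $\lambda<1/2$ the factor $R(z)$ dips below $-1$ for large $z$, so the scheme is not unconditionally stable. (The boundary case $\lambda=0$ is just explicit Euler and is evidently not unconditionally stable, so it can be excluded at the outset or treated separately.) This pins down $\lambda^{\textsc{ee}}=1/2$ exactly.

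I do not expect a serious obstacle here — the argument is essentially a one-line monotonicity observation once the right reduction is made. The only mildly delicate points are bookkeeping: correctly removing the removable singularity of $\varphi_1$ so that the $\mu=0$ (constant) mode is handled cleanly, noting that the relevant scaling collapses the two-parameter problem $(\tau,\mu)$ to the single variable $z=\tau\mu$ (this is what makes the stability \emph{unconditional} rather than $\tau$-dependent), and being careful that ``stability'' is the non-strict bound $|R|\le 1$ so that the threshold $\lambda=1/2$ is included. Everything else is elementary calculus on $R(z)=1-(1-\ee^{-\lambda z})/\lambda$.
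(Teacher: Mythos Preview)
Your proof is correct and follows essentially the same approach as the paper: diagonalize in Fourier space, reduce to the scalar amplification factor $R(z)=1-(1-\ee^{-\lambda z})/\lambda$ (equivalently the paper's $\Phi(z,\lambda)=1-1/\lambda+\ee^{\lambda z}/\lambda$ with the opposite sign convention for $z$), and read off the constraint $\lambda\ge 1/2$ from the limit as $|z|\to\infty$. Your explicit monotonicity observation is a slight elaboration on the paper's terser final step, but the argument is the same.
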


\begin{proof}
  Let us denote $\bb k = (k_1,\ldots,k_d)\in\ZZ^d$,
  $k^2=\sum_\mu k_\mu^2$, and let
$\hat{u}_{\bb k}^n$ be the $\bb k$th Fourier mode of $u^n$. Then, in Fourier
space we have
\begin{equation*}
\Phi(z, \lambda):=\frac{\hat{u}_{\bb k}^{n+1}}{\hat{u}_{\bb k}^{n}}=
1-\varphi_{1}(-\lambda\tau k^{2})\tau k^{2}=
1-\frac{1}{\lambda}+\frac{\ee^{z\lambda}}{\lambda},
\end{equation*}
where $z = -\tau k^2$.
Thus, we have unconditional stability if the stability function
$\Phi(z, \lambda) $ satisfies
\[ \vert \Phi(z,\lambda) \vert \leq 1 \quad \text{for all} \quad z \in (-\infty,0]. \]
Since $\lambda \in [0,1]$ this implies $\lambda\geq1/2$,
as desired.
\end{proof}
What Theorem~\ref{thm:expeuler} tells us is that we still get an
unconditionally stable scheme for the heat equation even if we only put
half of the Laplacian into $A$.
This is the key observation for the
numerical methods that we propose in this paper.

Let us now turn our attention to the Lawson--Euler scheme
\begin{equation}\label{eq:lawsoneulerstab}
u^{n+1}=\ee^{\tau\lambda\Delta}(u^{n}+\tau(1-\lambda)\Delta u^{n}),
\end{equation}
which can alternatively be seen as a Lie splitting where we approximate
the second subflow by explicit Euler, that is
\begin{equation*}
\ee^{\tau\lambda \Delta}\ee^{\tau(1-\lambda)\Delta}u^{n}
\approx \ee^{\tau\lambda\Delta}(u^{n}+\tau(1-\lambda)\Delta u^{n}) = u^{n+1}.
\end{equation*}
For this method we obtain the following result.
\begin{theorem}\label{thm:lawsoneuler}
The Lawson--Euler scheme~\eqref{eq:lawsoneulerstab}
is unconditionally stable for $\lambda\geq\lambda^{\textsc{le}}=0.218$.
\end{theorem}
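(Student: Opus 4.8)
The plan is to follow the template of the proof of Theorem~\ref{thm:expeuler}: pass to Fourier space, read off the scalar stability function of scheme~\eqref{eq:lawsoneulerstab}, and reduce unconditional stability to an inequality in one real variable. With $\bb k=(k_1,\dots,k_d)\in\ZZ^d$, $k^2=\sum_\mu k_\mu^2$ and $z=-\tau k^2\in(-\infty,0]$, the Fourier symbol of $\Delta$ is $-k^2$, so on the $\bb k$th mode the scheme multiplies by
\[
\Phi(z,\lambda)=\ee^{\lambda z}\bigl(1+(1-\lambda)z\bigr),
\]
and the goal is to show $|\Phi(z,\lambda)|\le1$ for all $z\le0$ precisely when $\lambda\ge\lambda^{\textsc{le}}$. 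The case $\lambda=1$ is immediate since then $\Phi=\ee^{z}$, so I would assume $\lambda\in(0,1)$ from here on.

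First I would dispose of the upper bound $\Phi(z,\lambda)\le1$, which I expect to hold for every $\lambda\in[0,1]$: when $z\ge-1/(1-\lambda)$ both factors lie in $[0,1]$, so does their product; and when $z<-1/(1-\lambda)$ one has $\Phi(z,\lambda)<0<1$. Hence the entire content of the theorem lies in the lower bound $\Phi(z,\lambda)\ge-1$. To locate the minimizer I would differentiate,
\[
\partial_z\Phi(z,\lambda)=\ee^{\lambda z}\bigl(1+\lambda(1-\lambda)z\bigr),
\]
note that the only zero is $z^\ast=-1/\bigl(\lambda(1-\lambda)\bigr)$, and check the sign of $\partial_z\Phi$ to conclude that $\Phi(\cdot,\lambda)$ is decreasing on $(-\infty,z^\ast)$ and increasing on $(z^\ast,0]$ (with $\Phi\to0$ as $z\to-\infty$), so $z^\ast$ is the global minimum on $(-\infty,0]$. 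Evaluating there, using $\lambda z^\ast=-1/(1-\lambda)$ and $(1-\lambda)z^\ast=-1/\lambda$, should give
\[
\Phi(z^\ast,\lambda)=-\frac{1-\lambda}{\lambda}\,\ee^{-1/(1-\lambda)},
\]
so unconditional stability is equivalent to $g(\lambda):=\frac{1-\lambda}{\lambda}\ee^{-1/(1-\lambda)}\le1$.

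The last step is to analyze $g$ on $(0,1)$. I would take logarithms, $\ln g(\lambda)=\ln(1-\lambda)-\ln\lambda-1/(1-\lambda)$, observe that its derivative $-\frac1{1-\lambda}-\frac1\lambda-\frac1{(1-\lambda)^2}$ is strictly negative, and combine this with $g(\lambda)\to+\infty$ as $\lambda\to0^+$ and $g(\lambda)\to0$ as $\lambda\to1^-$ to conclude that $g$ is a strictly decreasing bijection of $(0,1)$ onto $(0,+\infty)$. Therefore $g(\lambda)=1$, equivalently $\frac{1-\lambda}{\lambda}=\ee^{1/(1-\lambda)}$, has a unique root $\lambda^{\textsc{le}}\in(0,1)$ and $g(\lambda)\le1$ holds exactly for $\lambda\ge\lambda^{\textsc{le}}$. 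The only non-elementary ingredient, and the genuine difficulty here, is that this transcendental equation has no closed-form solution, so $\lambda^{\textsc{le}}$ must be determined numerically; doing so yields $\lambda^{\textsc{le}}\approx0.218$, which finishes the argument.
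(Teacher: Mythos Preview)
Your proof is correct and follows essentially the same approach as the paper: derive the stability function $\Phi(z,\lambda)=\ee^{\lambda z}(1+(1-\lambda)z)$, locate its minimizer $z^\ast=-1/(\lambda(1-\lambda))$, and reduce $\Phi(z^\ast,\lambda)\ge-1$ to a transcendental equation whose unique root is computed numerically. You are in fact more thorough than the paper, explicitly verifying the upper bound $\Phi\le1$, checking that $z^\ast$ is a global minimum rather than merely a critical point, and proving monotonicity of $g$ to guarantee uniqueness of $\lambda^{\textsc{le}}$; the paper states the equivalent condition $1-\tfrac1\lambda+\ee^{1/(1-\lambda)}\ge0$ and simply appeals to computing its zero.
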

\begin{proof}
The stability function satisfies
\begin{equation*}
\Phi(z, \lambda) =
\ee^{\lambda z}(1+(1-\lambda)z).
\end{equation*}
Taking the minimum with respect to $z$ we get
\[ z = \frac{1}{\lambda(\lambda-1)}. \]
Plugging this into the stability function we obtain
\begin{equation*}
  1-\frac{1}{\lambda} +\ee^{\frac{1}{1-\lambda}} \geq 0.
\end{equation*}
The result then simply follows by computing the zero of the left-hand side.
\end{proof}

This shows that there exist methods which are unconditionally stable
for values of $\lambda$ smaller than $\lambda^{\textsc{ee}}=1/2$.
This is of interest because it is often the case that the accuracy of the
method increases as $\lambda$
decreases (see the experiments in sections~\ref{sec:validation}
and~\ref{sec:numexpstab}).
In addition, as mentioned in the introduction,
the advantage of using schemes
that just employ the exponential function is the following:
in certain situations
it can be more efficient to compute the exponential than the $\varphi$ functions
(e.g., for exploiting the Kronecker structure or if a semi-Lagrangian
scheme is used).

A natural question to ask is if it is possible to construct a numerical method
for which the constraint on $\lambda$ is further reduced.
We propose then the following scheme
\begin{equation}\label{eq:stabilizedlawsoneulerstab}
u^{n+1}=u^{n}+\tau \ee^{\tau\lambda\Delta}\Delta u^{n}.
\end{equation}
We call this first order method the \emph{stabilized Lawson--Euler} scheme,
for which we obtain the following result.
\begin{theorem}\label{thm:stabilizedlawsoneuler}
  The stabilized Lawson--Euler scheme~\eqref{eq:stabilizedlawsoneulerstab} is
  unconditionally stable
for $\lambda\geq\lambda^{\textsc{sle}}=1/(2\ee)\approx0.184$.
\end{theorem}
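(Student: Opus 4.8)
\emph{Proof proposal.} The plan is to follow the same Fourier-space template used for Theorems~\ref{thm:expeuler} and~\ref{thm:lawsoneuler}. Writing $\hat u_{\bb k}^n$ for the $\bb k$th Fourier mode of $u^n$ and using that $\Delta$ acts as multiplication by $-k^2$ with $k^2=\sum_\mu k_\mu^2$, the recursion~\eqref{eq:stabilizedlawsoneulerstab} becomes $\hat u_{\bb k}^{n+1}=(1+z\,\ee^{\lambda z})\hat u_{\bb k}^n$ with $z=-\tau k^2\in(-\infty,0]$. Hence the stability function is
\[
\Phi(z,\lambda)=1+z\,\ee^{\lambda z},
\]
and unconditional stability is equivalent to $\lvert\Phi(z,\lambda)\rvert\le 1$ for all $z\le 0$.

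The first observation is that one of the two inequalities is automatic: since $z\le 0$ and $\ee^{\lambda z}>0$ we have $z\,\ee^{\lambda z}\le 0$, so $\Phi(z,\lambda)\le 1$ holds for every $z\le 0$ and every $\lambda$. The condition therefore reduces to $\Phi(z,\lambda)\ge -1$, that is, $z\,\ee^{\lambda z}\ge -2$ for all $z\le 0$. The core of the argument is then a one-variable minimization of $g(z)=z\,\ee^{\lambda z}$ on $(-\infty,0]$. Differentiating gives $g'(z)=\ee^{\lambda z}(1+\lambda z)$, which vanishes only at $z^\star=-1/\lambda$; since $g(0)=0$ and $g(z)\to 0$ as $z\to-\infty$ (exponential decay dominates the linear factor), $g$ attains its global minimum over the half-line at $z^\star$, with value $g(z^\star)=-1/(\lambda\ee)$. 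The stability requirement $g(z^\star)\ge -2$ then becomes $1/(\lambda\ee)\le 2$, i.e.\ $\lambda\ge 1/(2\ee)=\lambda^{\textsc{sle}}$, as claimed. (The degenerate case $\lambda=0$, where the scheme is just explicit Euler and thus only conditionally stable, is handled separately and is consistent with the bound.)

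I do not expect a genuine obstacle here: the only steps requiring a little care are confirming that the interior critical point is indeed the global minimum over the unbounded interval---via the boundary behaviour at $0$ and at $-\infty$---and noting that $z^\star=-1/\lambda$ always lies in $(-\infty,0]$ for $\lambda\in(0,1]$, so the constraint is genuinely active. One may optionally remark that $1/(2\ee)\approx 0.184<\lambda^{\textsc{le}}\approx 0.218<\lambda^{\textsc{ee}}=1/2$, which shows that the stabilized Lawson--Euler scheme improves on both the Lawson--Euler and the exponential Euler constraints on $\lambda$.
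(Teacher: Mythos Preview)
Your proof is correct and follows the same approach as the paper: derive the stability function $\Phi(z,\lambda)=1+z\,\ee^{\lambda z}$ and determine for which $\lambda$ it satisfies $|\Phi|\le 1$ on $(-\infty,0]$. The paper simply states the stability function and writes ``the stated result then follows immediately,'' whereas you have spelled out the minimization of $z\,\ee^{\lambda z}$ explicitly; your details are accurate and nothing is missing.
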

\begin{proof}
In this case, the stability function is given by
\begin{equation*}
\Phi(z, \lambda) = 1 + z \ee^{\lambda z}.
\end{equation*}
The stated result then follows immediately.
\end{proof}

\begin{remark}\label{rem:imex} \rm
A similar analysis can be performed also for other classes of schemes.
For example, in reference~\cite{WZWS20} some IMEX schemes have been analyzed in
a fully discretized context. In particular, for the well known
backward-forward Euler method
\begin{equation*}
(I-\tau\lambda\Delta)u^{n+1}=(I+\tau(1-\lambda)\Delta)u^{n}
\end{equation*}
the authors obtain the bound
$\lambda\geq\lambda^{\textsc{bfe}}=1/2$ for unconditional stability.
Moreover, they propose the second order
method
\begin{equation*}
\begin{aligned}
  \left(I-\frac{\tau}{2}\lambda\Delta \right)U&=\left(I+
   \frac{\tau}{2}(1-\lambda)\Delta \right)u^n, \\
  \left(I-\frac{\tau}{2}\lambda\Delta\right)u^{n+1}&=
   \left(I+\frac{\tau}{2}\lambda\Delta\right)u^n + \tau(1-\lambda)\Delta U
\end{aligned}
\end{equation*}
which has the same stability bound $\lambda\geq\lambda^{\textsc{imex2}}=1/2$.
\end{remark}

Let us now consider some examples of second order exponential integrators.
In particular, we start with the following
exponential Runge--Kutta type scheme
for equation~\eqref{eq:modeleqstab}
\begin{equation}\label{eq:exprk2phi2stab}
\begin{aligned}
U &= u^n + c_2\tau\varphi_1(c_2\tau\lambda\Delta)\Delta u^n, \\
u^{n+1} &= u^n + \tau\varphi_1(\tau\lambda\Delta)\Delta u^n +
\frac{\tau}{c_2}\varphi_2(\tau\lambda\Delta)(1-\lambda)\Delta(U-u^n),
\end{aligned}
\end{equation}
where $0<c_2\leq 1$ is a free parameter and $\varphi_2(z) = (\varphi_1(z)-1)/z$.
For this scheme we have the following result.
\begin{theorem}\label{thm:exprk2phi2}
  The exponential Runge--Kutta scheme~\eqref{eq:exprk2phi2stab} is
  unconditionally stable for $\lambda\geq 1/(1+c_2)$.
\end{theorem}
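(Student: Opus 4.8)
The plan is to follow the pattern of the proofs of Theorems~\ref{thm:expeuler}--\ref{thm:stabilizedlawsoneuler}: pass to Fourier space to obtain the scalar amplification factor $\Phi(z,\lambda)$, and then determine for which $\lambda$ one has $\vert\Phi(z,\lambda)\vert\le 1$ on the whole negative real axis $z=-\tau k^2\in(-\infty,0]$.

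First I would substitute $\Delta\mapsto -k^2$ and set $z=-\tau k^2$. Since $\lambda\Delta u^n+(1-\lambda)\Delta u^n=\Delta u^n$, the first stage reads $\hat U/\hat u^n=1+c_2 z\,\varphi_1(c_2\lambda z)$, which by $\varphi_1(\zeta)=(\ee^{\zeta}-1)/\zeta$ simplifies to $\hat U/\hat u^n=1+(\ee^{c_2\lambda z}-1)/\lambda$, so that $(\hat U-\hat u^n)/\hat u^n=(\ee^{c_2\lambda z}-1)/\lambda$. Inserting this into the second stage and using in addition $\varphi_2(\zeta)=(\ee^{\zeta}-1-\zeta)/\zeta^2$, a routine computation (with the shorthand $s:=\lambda z\le 0$) gives
\[
  \Phi(z,\lambda)=1+\frac{\ee^{s}-1}{\lambda}
  +\frac{(1-\lambda)(\ee^{s}-1-s)(\ee^{c_2 s}-1)}{c_2\lambda^2 s}.
\]
This bookkeeping with the $\varphi$-functions is the only mildly tedious part; the formula may be sanity-checked via $\Phi(z,\lambda)=1+z+z^2/2+O(z^3)$, consistent with second order.

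Next I would record signs. By strict convexity of the exponential, $\ee^{s}-1-s>0$ for $s\neq 0$, while $\ee^{c_2 s}-1\le 0$ and $s\le 0$; hence the last term of $\Phi$ is nonnegative. The lower bound is then immediate:
\[
  \Phi(z,\lambda)\ge 1+\frac{\ee^{s}-1}{\lambda}\ge 1-\frac{1}{\lambda}\ge 1-(1+c_2)=-c_2\ge -1,
\]
where the hypothesis $\lambda\ge 1/(1+c_2)$ and $c_2\le 1$ enter.

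The real work, and the step I expect to be the main obstacle, is the upper bound $\Phi\le 1$. Clearing the negative denominator $c_2\lambda^2 s$ and putting $t:=-s\ge 0$, this is equivalent to
\[
  \lambda\, t\,(1-\ee^{-t})\ \ge\ \frac{1-\lambda}{c_2}\,(\ee^{-t}-1+t)(1-\ee^{-c_2 t}).
\]
Because $\lambda\ge 1/(1+c_2)$ is exactly $\lambda\ge(1-\lambda)/c_2$, and the right-hand side is nonnegative, it suffices to prove the parameter-free inequality $t(1-\ee^{-t})\ge(\ee^{-t}-1+t)(1-\ee^{-c_2 t})$. Writing $u:=1-\ee^{-t}$ and $v:=1-\ee^{-c_2 t}$, one has $t-u=\ee^{-t}-1+t\ge 0$ and $0\le v\le u$ (here $c_2\le 1$ is used), so the claim becomes $tu\ge(t-u)v$, which follows from $(t-u)v\le(t-u)u\le tu$. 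Finally, since $t(1-\ee^{-t})/\bigl((\ee^{-t}-1+t)(1-\ee^{-c_2 t})\bigr)\to 1$ as $t\to\infty$, the bound cannot be relaxed below $\lambda=1/(1+c_2)$, so this threshold is sharp and the theorem follows.
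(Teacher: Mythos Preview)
Your proof is correct and follows the same framework as the paper: derive the scalar stability function in Fourier space and enforce $\lvert\Phi(z,\lambda)\rvert\le 1$ on $(-\infty,0]$. The paper's argument is much terser: it writes $\Phi(z,\lambda)=1+z\varphi_1(\lambda z)+z^2(1-\lambda)\varphi_2(\lambda z)\varphi_1(c_2\lambda z)$ and simply asserts that imposing $\lvert\Phi\rvert\le 1$ yields $\frac{1-\lambda}{c_2\lambda}-1\le 0$, which is exactly the condition one reads off from the limit $z\to-\infty$ (where $\Phi\to 1-1/\lambda+(1-\lambda)/(c_2\lambda^2)$). Your treatment is more complete: you explicitly verify both $\Phi\ge -1$ (via the sign of the $\varphi_2$-term and $c_2\le 1$) and $\Phi\le 1$ for all $z\le 0$ through the elementary inequality $tu\ge(t-u)v$ with $u=1-\ee^{-t}$, $v=1-\ee^{-c_2 t}$, and you confirm sharpness from $B/A\to 1$ as $t\to\infty$. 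So the route is the same, but you supply the sufficiency argument that the paper leaves implicit.
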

\begin{proof}
The stability function is given by
\begin{equation*}
\Phi(z, \lambda) = 1 +  z\varphi_1(\lambda z) +
z^2(1-\lambda)\varphi_2(\lambda z)\varphi_1 (c_2 \lambda z).
\end{equation*}
Then, by imposing
$\lvert \Phi(z, \lambda) \rvert \leq 1$ for all $z\in(-\infty,0]$ we have
\begin{equation*}
 \frac{1-\lambda}{c_2\lambda}- 1 \leq 0
\end{equation*}
from which we obtain $\lambda \geq 1/(1+c_2)$ as desired.
\end{proof}
The smallest result, in terms of unconditional stability, is obtained by
setting the free parameter $c_2=1$, which yields $\lambda \geq
\lambda^{\textsc{erk2p2}}=1/2$.

Let us consider now another second order exponential Runge--Kutta
integrator, which requires just the $\varphi_1$ function. For the model
equation~\eqref{eq:modeleqstab} this yields
\begin{equation}\label{eq:exprk2phi1stab}
\begin{aligned}
U &= u^n + c_2\tau\varphi_1(c_2\tau\lambda\Delta)\Delta u^n, \\
u^{n+1} &= u^n + \tau\varphi_1(\tau\lambda\Delta)\Delta u^n +
\frac{\tau}{2c_2}\varphi_1(\tau\lambda\Delta)(1-\lambda)\Delta(U-u^n).
\end{aligned}
\end{equation}
We  obtain the following unconditional stability result.
\begin{theorem}\label{thm:exprk2phi1}
  The exponential Runge--Kutta scheme~\eqref{eq:exprk2phi1stab} is
  unconditionally stable
for $\lambda\geq1/(1+2c_2)$.
\end{theorem}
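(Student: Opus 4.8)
The plan is to follow the same two-step recipe used for scheme~\eqref{eq:exprk2phi2stab} in Theorem~\ref{thm:exprk2phi2}: first pass to Fourier space to obtain the scalar stability function $\Phi(z,\lambda)$ associated with~\eqref{eq:exprk2phi1stab} applied to the model equation~\eqref{eq:modeleqstab}, then read off the constraint on $\lambda$ from the requirement $\lvert\Phi(z,\lambda)\rvert\le 1$ for all $z\in(-\infty,0]$.

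For the first step I would fix a Fourier mode $\hat{u}_{\bb k}^n$, substitute $\Delta\mapsto -k^2$, and set $z=-\tau k^2$, so that $\tau\lambda\Delta\mapsto\lambda z$ while $\tau\Delta\mapsto z$. The internal stage becomes $\hat U=(1+c_2 z\varphi_1(c_2\lambda z))\hat{u}_{\bb k}^n$, hence $\hat U-\hat{u}_{\bb k}^n=c_2 z\varphi_1(c_2\lambda z)\hat{u}_{\bb k}^n$; inserting this into the update and collecting terms gives
\begin{equation*}
\Phi(z,\lambda)=1+z\varphi_1(\lambda z)+\frac{1-\lambda}{2}\,z^2\varphi_1(\lambda z)\varphi_1(c_2\lambda z).
\end{equation*}
A Taylor check at $z=0$, using $\varphi_1(w)=1+w/2+O(w^2)$, shows $\Phi(z,\lambda)=1+z+z^2/2+O(z^3)$, which matches $\ee^z$ to second order and is harmless for small $\lvert z\rvert$.

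For the second step I would examine the limit $z\to-\infty$, which is the decisive regime. Since $\varphi_1(w)=(\ee^w-1)/w\sim -1/w$ as $w\to-\infty$, one gets $z\varphi_1(\lambda z)\to -1/\lambda$ and $z^2\varphi_1(\lambda z)\varphi_1(c_2\lambda z)\to 1/(c_2\lambda^2)$, so that
\begin{equation*}
\lim_{z\to-\infty}\Phi(z,\lambda)=1-\frac{1}{\lambda}+\frac{1-\lambda}{2c_2\lambda^2}.
\end{equation*}
Requiring this limit to lie in $[-1,1]$ reduces (the lower bound being automatic for $\lambda\in[1/(1+2c_2),1]$ and $c_2\le 1$) to $\frac{1-\lambda}{2c_2\lambda}-1\le 0$, i.e.\ $\lambda\ge 1/(1+2c_2)$, which is the stated bound.

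The main obstacle is the same as for the earlier theorems in this section: the asymptotic argument only delivers a necessary condition, so strictly one must still check that $\lvert\Phi(z,\lambda)\rvert\le 1$ holds on the \emph{whole} negative axis once $\lambda\ge 1/(1+2c_2)$, not merely in the limit. I expect this to go through because the exponential factors $\ee^{\lambda z}$ and $\ee^{c_2\lambda z}$ hidden inside the $\varphi_1$'s provide strong damping, so that $\Phi(\cdot,\lambda)$ cannot exceed $1$ in modulus at any interior point; a fully rigorous version would bound $\lvert\Phi\rvert$ directly (e.g.\ via monotonicity in $z$) or inspect its few stationary points, exactly as for the companion schemes.
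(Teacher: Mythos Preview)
Your proposal is correct and follows essentially the same route as the paper: derive the stability function $\Phi(z,\lambda)=1+z\varphi_1(\lambda z)+\tfrac{1-\lambda}{2}z^2\varphi_1(\lambda z)\varphi_1(c_2\lambda z)$ and read off the constraint $\lambda\ge 1/(1+2c_2)$ from the limit $z\to-\infty$. The paper's proof is in fact terser than yours and, like the companion Theorem~\ref{thm:exprk2phi2}, only spells out the necessary condition coming from that limit; your explicit acknowledgment that sufficiency on the whole negative axis is a separate (unwritten) check is an honest addition rather than a defect.
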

\begin{proof}
  The stability function is given by
\begin{equation*}
\Phi(z, \lambda) = 1 + z\varphi_1(\lambda z) +
\frac{z^2}{2}(1-\lambda)\varphi_1(\lambda z)\varphi_1 (c_2 \lambda z)
\end{equation*}
for which we obtain $\lambda \geq 1/(1+2c_2)$ as desired.
\end{proof}
The choice $c_2=1$ yields the bound
$\lambda \geq \lambda^{\textsc{erk2p1}}=1/3$,
which is smaller than the bound obtained for the previous
method.

Concerning the second order Lawson type schemes, we consider the
Lawson2a and the Lawson2b integrators, which integrate
equation~\eqref{eq:modeleqstab} as follows
\begin{equation}
    \begin{aligned}\label{eq:lawson2astab}
      U &= \ee^{\frac{\tau}{2}\lambda\Delta}\left(u^n +
      \tfrac{\tau}{2}(1-\lambda)\Delta u^n\right), \\
      u^{n+1} &= \ee^{\tau\lambda\Delta}u^n +
      \tau\ee^{\frac{\tau}{2}\lambda\Delta}(1-\lambda)\Delta U,
    \end{aligned}
  \end{equation}
and
  \begin{equation}\label{eq:lawson2bstab}
    \begin{aligned}
      U &= \ee^{\tau\lambda\Delta}\left(u^n + \tau(1-\lambda)\Delta u^n\right), \\
      u^{n+1} &= \ee^{\tau\lambda\Delta}u^n + \frac{\tau}{2}\ee^{\tau\lambda\Delta}(1-\lambda)\Delta u^n + \frac{\tau}{2}(1-\lambda)\Delta U,
    \end{aligned}
  \end{equation}
respectively. In terms of unconditional stability, we have the following result.
\begin{theorem}\label{thm:lawson2ab}
  The Lawson2a scheme~\eqref{eq:lawson2astab} and the Lawson2b
  scheme~\eqref{eq:lawson2bstab} are unconditionally stable
for $\lambda\geq\lambda^{\textsc{l2a}}=\lambda^{\textsc{l2b}}=0.301$.
\end{theorem}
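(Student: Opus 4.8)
The plan is to reuse the Fourier-space strategy of the previous proofs. Write $\bb k\in\ZZ^d$, $k^2=\sum_\mu k_\mu^2$, $z=-\tau k^2\in(-\infty,0]$, and let $\hat u_{\bb k}^n$ be the $\bb k$th Fourier coefficient of $u^n$; then $\ee^{\sigma\tau\lambda\Delta}$ acts on that mode as multiplication by $\ee^{\sigma\lambda z}$ and $\tau\Delta$ as multiplication by $z$. For \eqref{eq:lawson2astab} I would first eliminate the internal stage, $\hat U_{\bb k}=\ee^{\lambda z/2}\bigl(1+\tfrac12(1-\lambda)z\bigr)\hat u_{\bb k}^n$, and then substitute into the update to obtain
\[
  \Phi(z,\lambda)=\frac{\hat u_{\bb k}^{n+1}}{\hat u_{\bb k}^{n}}
  =\ee^{\lambda z}\Bigl(1+(1-\lambda)z+\tfrac12(1-\lambda)^2z^2\Bigr).
\]
Carrying out the analogous substitution for \eqref{eq:lawson2bstab} gives \emph{the same} stability function — the two schemes differ only in the bookkeeping of the second-order remainder — which is precisely why the two thresholds coincide. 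It is worth noting that $\Phi(z,\lambda)=\ee^{\lambda z}\,p_2\bigl((1-\lambda)z\bigr)$, where $p_2(t)=1+t+t^2/2$ is the degree-two Taylor polynomial of the exponential, mirroring the ``Lawson flow for $\lambda\Delta$ plus explicit second-order step for $(1-\lambda)\Delta$'' structure of both methods.

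Next I would reduce to a scalar inequality. Since $p_2(t)=\tfrac12(t+1)^2+\tfrac12>0$ for all real $t$, we have $\Phi(z,\lambda)>0$, so unconditional stability is equivalent to $\Phi(z,\lambda)\le 1$ for all $z\le 0$. The case $\lambda=1$ is trivial ($\Phi=\ee^z$), so assume $\lambda<1$, put $\alpha=\lambda/(1-\lambda)\ge 0$, and substitute $s=-(1-\lambda)z\ge 0$; the stability requirement becomes
\[
  h_\alpha(s):=\ee^{\alpha s}-1+s-\tfrac12 s^2\ \ge\ 0\qquad\text{for all }s\ge 0 .
\]
Because $h_\alpha(s)$ is strictly increasing in $\alpha$ for each fixed $s>0$, there is a threshold $\alpha^\star$ such that the inequality holds for all $s\ge 0$ iff $\alpha\ge\alpha^\star$. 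At $\alpha=\alpha^\star$ the minimum of $h_{\alpha^\star}$ over $[0,\infty)$ is attained at an interior point $s^\star$ with $h_{\alpha^\star}(s^\star)=0$ and $h_{\alpha^\star}'(s^\star)=0$, i.e. the graphs of $y=\ee^{\alpha s}$ and $y=1-s+s^2/2$ are tangent there. Eliminating the exponential from $\ee^{\alpha s}=1-s+s^2/2$ and $\alpha\,\ee^{\alpha s}=s-1$ yields $\alpha=2(s-1)\big/\bigl((s-1)^2+1\bigr)$ together with $\alpha s=\ln\bigl(1-s+s^2/2\bigr)$; solving this transcendental pair numerically gives $s^\star\approx 5.42$ and $\alpha^\star\approx 0.4306$, hence $\lambda^{\textsc{l2a}}=\lambda^{\textsc{l2b}}=\alpha^\star/(1+\alpha^\star)\approx 0.301$.

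The routine parts are the Fourier substitutions and the final root finding; the step that needs genuine care is showing that this tangency condition is indeed the binding one — that $h_\alpha$ dips below zero for $\alpha<\alpha^\star$ and stays nonnegative for $\alpha\ge\alpha^\star$. I would argue via the shape of $h_\alpha$: for $\alpha<1$ the second derivative $h_\alpha''(s)=\alpha^2\ee^{\alpha s}-1$ changes sign exactly once, so $h_\alpha'(s)=\alpha\,\ee^{\alpha s}+1-s$ is first decreasing then increasing and therefore has at most two zeros; since $h_\alpha(0)=0$, $h_\alpha'(0)=1+\alpha>0$ and $h_\alpha(s)\to+\infty$, the only possible interior minimum sits at the larger zero $s_2>1$, and by the envelope identity $\tfrac{d}{d\alpha}\,h_\alpha(s_2(\alpha))=s_2\,\ee^{\alpha s_2}>0$, so $\min_{s\ge0}h_\alpha$ is strictly increasing in $\alpha$ and crosses $0$ exactly at $\alpha^\star$. (For $\alpha\ge 1$, i.e. $\lambda\ge 1/2$, one has $h_\alpha''\ge 0$, so $h_\alpha'\ge h_\alpha'(0)>0$ and $h_\alpha\ge h_\alpha(0)=0$ at once, recovering the crude bound.)
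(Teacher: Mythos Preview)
Your argument is correct and follows the same Fourier-mode strategy as the paper: derive the stability function and locate the threshold numerically. The paper's own proof is very terse --- it simply displays
\[
\Phi_{\textsc{l2a}}(z,\lambda)=\ee^{\lambda z}\Bigl(1+z(1-\lambda)\bigl(1+\tfrac{z(1-\lambda)}{2}\bigr)\Bigr),\qquad
\Phi_{\textsc{l2b}}(z,\lambda)=\ee^{\lambda z}\Bigl(1+\tfrac{z}{2}(1-\lambda)+\tfrac{z}{2}(1-\lambda)(1+z(1-\lambda))\Bigr)
\]
and states that ``a numerical calculation leads to the restriction $\lambda\ge 0.301$'' without further comment.

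Your write-up adds two genuine improvements over the paper. First, you observe (and it is worth saying outright) that the two displayed stability functions are \emph{algebraically identical}, both equal to $\ee^{\lambda z}p_2\bigl((1-\lambda)z\bigr)$; the paper leaves them in different-looking forms and never remarks on this, so the coincidence $\lambda^{\textsc{l2a}}=\lambda^{\textsc{l2b}}$ appears there as a numerical accident rather than a structural fact. Second, your reparametrisation $s=-(1-\lambda)z$, $\alpha=\lambda/(1-\lambda)$ together with the tangency/envelope argument supplies an analytical justification that the numerically computed threshold is indeed sharp and unique --- the paper offers none of this and simply trusts the numerics. Both additions are sound; the shape analysis of $h_\alpha$ (single inflection for $\alpha<1$, hence at most one interior minimum, with $\partial_\alpha h_\alpha(s)=s\,\ee^{\alpha s}>0$ giving strict monotonicity of the minimum value) is the right way to close the argument.
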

\begin{proof}
  The stability functions are given by
\begin{equation*}
  \Phi(z, \lambda) =\ee^{\lambda z}\left(
   1+ z(1-\lambda)\left(1+\frac{z(1-\lambda)}{2}\right)\right)
\end{equation*}
and
\begin{equation*}
  \Phi(z, \lambda) =  \ee^{\lambda z}\left(1+\frac{z}{2}(1-\lambda)+\frac{z}{2}(1-\lambda)(1+z(1-\lambda))\right)
\end{equation*}
for Lawson2a and Lawson2b, respectively. In both cases, a numerical calculation
leads to the restriction $\lambda \geq 0.301$ for unconditional stability.
\end{proof}

We now propose a second order variant of the stabilized Lawson--Euler scheme.
Letting $0<\alpha\leq 1$ be a free parameter, we obtain for the model
equation~\eqref{eq:modeleqstab} the following method
\begin{equation}\label{eq:stablawson2stab}
\begin{aligned}
U &= u^n + \alpha\tau\ee^{\alpha\tau\lambda\Delta}\Delta u^n, \\
u^{n+1} &= u^n + \tau\ee^{\frac{\tau}{2}\lambda\Delta}\Delta u^n +
\frac{\tau}{2\alpha}\ee^{\tau\lambda\Delta}(1-\lambda)\Delta(U-u^n).
\end{aligned}
\end{equation}
We call this second order integrator the \textit{stabilized Lawson2} scheme, for
which we have the following result.
\begin{theorem}\label{thm:stablawson2}
  The stabilized Lawson2 scheme~\eqref{eq:stablawson2stab} with
  $\alpha=\alpha^{\textsc{sl2}}=0.327$
is unconditionally stable
for $\lambda\geq\lambda^{\textsc{sl2}}=0.183$.
\end{theorem}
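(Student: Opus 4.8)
The plan is to follow exactly the same recipe as in the proofs of Theorems~\ref{thm:stabilizedlawsoneuler} and~\ref{thm:lawson2ab}: pass to Fourier space, derive the scalar stability function $\Phi(z,\lambda)$ for the method~\eqref{eq:stablawson2stab} applied to the model equation~\eqref{eq:modeleqstab}, and then impose $\lvert\Phi(z,\lambda)\rvert\le 1$ for all $z\in(-\infty,0]$. First I would compute $\Phi$. Writing $z=-\tau k^2$ and $\hat u_{\bb k}^n$ for the $\bb k$th Fourier mode, the internal stage gives $\hat U = \hat u^n\bigl(1+\alpha z\,\ee^{\alpha\lambda z}\bigr)$, so that $\hat U - \hat u^n = \alpha z\,\ee^{\alpha\lambda z}\hat u^n$. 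Substituting into the update step and dividing by $\hat u^n$ yields
\begin{equation*}
\Phi(z,\lambda) = 1 + z\,\ee^{\frac{\lambda z}{2}} + \frac{z^2}{2}(1-\lambda)\,\ee^{\lambda z}\,\ee^{\alpha\lambda z}.
\end{equation*}
Note the $\alpha$-dependence drops out of the leading terms and only enters through the last term, which is why $\alpha$ can be tuned separately from the first-order accuracy.

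Next I would verify that $\Phi$ is the stability function of a genuinely second-order scheme, i.e.\ that $\Phi(z,1)=\ee^z + O(z^3)$ and more generally that the consistency conditions are met; this is a routine Taylor check and I would only mention it. The substantive step is the stability analysis: one must show that for $\alpha = \alpha^{\textsc{sl2}}=0.327$ the inequality $-1\le\Phi(z,\lambda)\le 1$ holds for all $z\le 0$ precisely when $\lambda\ge\lambda^{\textsc{sl2}}=0.183$. Since each exponential factor is positive and bounded by $1$ on $(-\infty,0]$, the upper bound $\Phi(z,\lambda)\le 1$ reduces (after cancelling the leading $1$) to $z\,\ee^{\lambda z/2} + \tfrac12 z^2(1-\lambda)\ee^{(1+\alpha)\lambda z}\le 0$, i.e.\ dividing by $z<0$, to $\ee^{\lambda z/2}\ge \tfrac12 (1-\lambda)(-z)\ee^{(1+\alpha)\lambda z}$; one then substitutes $w=-z\ge 0$ and analyzes the single-variable function on $w\in[0,\infty)$. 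The binding constraint comes from the lower bound $\Phi(z,\lambda)\ge -1$, where the middle term $z\,\ee^{\lambda z/2}$ can be as negative as $-2/(\ee\lambda)$ at $z=-2/\lambda$; the third term partially compensates, and the worst-case $z$ must be found by setting $\partial_z\Phi=0$.

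The main obstacle is that, unlike the first-order stabilized Lawson--Euler case where the critical point can be found in closed form, here the equation $\partial_z\Phi(z,\lambda)=0$ is transcendental and couples $z$, $\lambda$ and $\alpha$, so the values $\alpha^{\textsc{sl2}}=0.327$ and $\lambda^{\textsc{sl2}}=0.183$ must be obtained numerically. Concretely, I would treat this as a min--max problem: define $\lambda^{\star}(\alpha)=\min\{\lambda : \lvert\Phi(z,\lambda)\rvert\le 1\ \forall z\le 0\}$, obtained by solving $\min_{z\le 0}\Phi(z,\lambda)=-1$ together with the stationarity condition, and then minimize $\lambda^{\star}(\alpha)$ over $\alpha\in(0,1]$; the optimum is attained at $\alpha\approx 0.327$, giving $\lambda^{\star}\approx 0.183$. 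As in Theorem~\ref{thm:lawson2ab}, I would simply state that ``a numerical calculation leads to the restriction $\lambda\ge 0.183$ for unconditional stability,'' optionally remarking that the value of $\alpha$ was chosen to minimize this bound, and (if desired) displaying a plot of $\Phi(\cdot,\lambda^{\textsc{sl2}})$ to confirm it stays in $[-1,1]$.
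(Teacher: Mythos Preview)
Your proposal is correct and follows essentially the same approach as the paper: derive the stability function $\Phi(z,\lambda)=1+z\ee^{\lambda z/2}+\tfrac{z^2}{2}(1-\lambda)\ee^{(1+\alpha)\lambda z}$ (your expression with $\ee^{\lambda z}\ee^{\alpha\lambda z}$ is the same thing) and then appeal to a numerical calculation to obtain $\lambda\ge 0.183$ at $\alpha=0.327$. The paper's proof is in fact even terser than yours---it simply states $\Phi$ and invokes the numerical computation---while your added discussion of the upper/lower bound split and the min--max framing for selecting $\alpha$ is correct supplementary detail (the paper relegates the remark about choosing $\alpha$ to minimize $\lambda^{\textsc{sl2}}$ to the sentence following the proof).
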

\begin{proof}
The stability function is given by
\begin{equation*}
  \Phi(z, \lambda) = 1 + z\ee^{\frac{\lambda z}{2}} +
  \frac{z^2}{2}(1-\lambda)\ee^{(1+\alpha)\lambda z}.
\end{equation*}
Then, setting $\alpha=\alpha^{\textsc{sl2}}=0.327$, a numerical calculation
leads to the
restriction $\lambda \geq 0.183$ for unconditional stability.
\end{proof}
Note that the value $\alpha^{\textsc{sl2}}=0.327$ is chosen such that the
corresponding parameter $\lambda^{\textsc{sl2}}$ is as small as possible.

In Table~\ref{tab:stab} we have collected, in descending order,
the stability lower bounds of all the methods considered in this section,
together with the labels that we will use throughout the paper.
To summarize, we find that the stabilized Lawson schemes can be
operated with the smallest values of $\lambda$, followed by the Lawson
schemes. Exponential integrators and IMEX schemes require a larger
value of $\lambda$ to be unconditionally stable.
\begin{table}[htb!]
  \centering
  \begin{tabular}{c|c|c|c}
method & label   & order & stab. lower bound\\
  \hline
  \rule{0pt}{12pt}
  backward--forward Euler & \textsc{bfe} & 1 & $\lambda^{\textsc{bfe}}= 1/2$ \\
  implicit--explicit2 & \textsc{imex2} & 2 & $\lambda^{\textsc{imex2}}= 1/2$ \\
  exponential Euler & \textsc{ee} & 1 & $\lambda^{\textsc{ee}}= 1/2$ \\
  exponential RK2 with $\varphi_2$ ($c_2=1$) & \textsc{erk2p2} & 2 & $\lambda^{\textsc{erk2p2}}=1/2$ \\
  exponential RK2 with $\varphi_1$ ($c_2=1$) & \textsc{erk2p1} & 2 & $\lambda^{\textsc{erk2p1}}= 1/3$ \\
  Lawson2a & \textsc{l2a} & 2 & $\lambda^{\textsc{l2a}}= 0.301$ \\
  Lawson2b & \textsc{l2b} & 2 & $\lambda^{\textsc{l2b}}= 0.301$ \\
  Lawson--Euler & \textsc{le} & 1 & $\lambda^{\textsc{le}}= 0.218$ \\
  stabilized Lawson--Euler & \textsc{sle} & 1 & $\lambda^{\textsc{sle}}= 1/(2\ee)\approx 0.184$\\
  stabilized Lawson2 ($\alpha^{\textsc{sl2}}=0.327$) & \textsc{sl2} & 2 & $\lambda^{\textsc{sl2}}= 0.183$
  \end{tabular}
  \caption{Collection of methods, in descending order in terms of unconditional
  stability. Floating point notation means obtained by numerical approximations.}
  \label{tab:stab}
\end{table}

\begin{remark} \rm
The stabilized Lawson--Euler and the stabilized Lawson2 schemes are
\emph{not} exact for linear problems, as opposed to the other exponential
integrators. Moreover, it is straightforward to see that
they are not A-stable (see Figure~\ref{fig:astab} for a plot of the A-stability
regions). Nevertheless, the stability regions contain the whole negative
real axis. Therefore, with both schemes we can treat the Laplacian
operator {\color{black}and small perturbations of it} without incurring a time step size restriction.
\begin{figure}[htb!]
  \centering
  \input{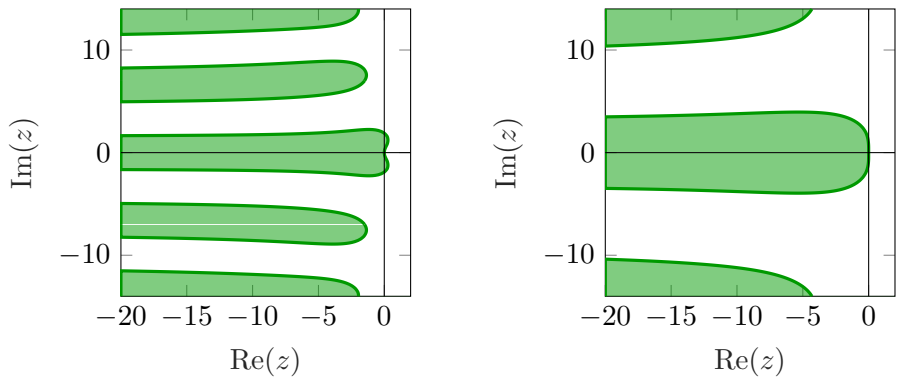}
  \caption{A-stability regions (green) of the \textsc{sle} method
    (left) and of the \textsc{sl2} scheme
    (right).}
  \label{fig:astab}
\end{figure}
\end{remark}

\section{Accelerated exponential methods}\label{sec:methods}
In this section we use the above analysis to
propose a choice for the approximation operator $A$
in a more general context. We
consider in fact the following {\color{black}semilinear} advection-diffusion-reaction equation
\begin{equation} \label{eq:adr}
  \partial_t u(t,\bb x) = \nabla\cdot(a(\bb x) \nabla u(t,\bb x)) +
  \nabla\cdot(\bb b(\bb x) u(t, \bb x)) + r(t, \bx,u(t,\bb x)),
\end{equation}
where $a(\bb x)\in \RR^{d\times d}$ is the diffusion
tensor and $\bb b(\bb x)=(b_\mu(\bb x))_\mu$, for $\mu=1,\ldots,d$,
is a velocity vector field,
with appropriate boundary and initial conditions on the domain
$\Omega\subset\RR^d$. We now rewrite equation~\eqref{eq:adr}
in the following equivalent form
\begin{subequations}\label{eq:fullstab}
\begin{equation} \label{eq:adr_equiv}
    \partial_t u(t,\bb x) = Au(t,\bb x)+g(t,\bb x,u(t,\bb x)),
\end{equation}
where
\begin{equation}\label{eq:linop}
Au(t,\bb x) = \lambda \sum_{\mu=1}^da_{\mu\mu}^{\text{max}} \partial_{x_\mu x_\mu} u(t, \bb x)
+ \bb \beta \cdot \nabla u(t, \bb x)
\end{equation}
and
\begin{equation}\label{eq:nonlin}
g(t, \bx,u(t, \bb x)) =
\nabla\cdot \big((a(\bb x) - \lambda a^{\text{max}}) \nabla u(t,\bb x)\big)
+\nabla\cdot\left((\bb b(\bb x) - \bb \beta)u(t, \bb x)\right)\\
+ r(t, \bx,u(t,\bb x)).
\end{equation}
Here $\lambda\in[0,1]$,
while $a^\text{max}$ and $\bb\beta$ are the diagonal matrix
and the vector with components
\begin{equation}\label{eq:amaxbeta}
a_{\mu \mu}^{\text{max}} = \max_{\bb x} a_{\mu\mu}(\bb x),\quad
  \beta_\mu = \frac{1}{\vert \Omega \vert}\int_{\Omega} \big(b_\mu(\bb x) +
\partial_{x_\mu}a_{\mu\mu}(\bb x) \big) \,d\bb x,  \quad \mu=1,\ldots,d,
\end{equation}
\end{subequations}
respectively.
The idea here is that we extract $\lambda a^{\text{max}}$ from the diffusion
part of the equation as described in the previous section.
In addition, we move into the linear operator also a constant part from
the advection term, which is in general beneficial for exponential integrators.
We have chosen here to use the average
velocity as this physically represents a good constant coefficient
approximation of the advection operator. Moreover,
note that we have to take into
account not only the advection given by $\bb b(\bb x)$ but also the advection
that is implicitly contained in the diffusion operator.

It is possible now to apply to equation~\eqref{eq:adr_equiv}
the methods analysed in section~\ref{sec:linstaban}
for the linear homogeneous diffusion case and formulated
in the appendix for the general case.
The required matrix functions/linear solves for $A$ can be efficiently
computed in a variety of ways (e.g.~using FFT or $\mu$-mode techniques).
More details and an investigation of the performance of these schemes will be
considered in section \ref{sec:numexpstab}.

Note that in order to obtain an efficient scheme the choice of
the parameter $\lambda$ is crucial. In
reference~\cite{WZWS20}, in the
context of IMEX schemes, the authors obtained satisfactory performances
choosing $\lambda$ as small as the stability
condition permits. By following this approach, we also observed
good results in the numerical validation performed in
section~\ref{sec:validation}.
However, additional numerical experiments presented in
section~\ref{sec:numexpstab} show that this
is not necessarily always the best choice. {\color{black}Thus, we wish
to determine $\lambda$ in such a way that the error
of the numerical approximation is as small as possible, while still retaining
efficiency.
Based on numerical evidence of many examples conducted,}
we suggest the following approach. First, we perform a simulation where only a
\textit{small} number of degrees of freedom is used for the space 
discretization {\color{black}(while still capturing the overall physical dynamics
of the equation)}.
Using this simulation we perform a parameter scan for the admissible
values of $\lambda$ in order to determine the one that gives the
smallest error. Only this $\lambda$ is then used in the simulation of the
advection-diffusion-reaction equation. Note that this procedure has
negligible computational cost, since it is performed only for a small-sized
problem. Nevertheless, numerical experiments show that the obtained values
for $\lambda$ still give good approximations to the optimal
ones which correspond to the highest accuracy of the methods
(see, for instance, Figure~\ref{fig:adr2d_diffl}).

\section{Numerical validation of the stability
  constraints}\label{sec:validation}
In this section we show that the stability bounds derived for the model
equation in section~\ref{sec:linstaban} do also apply to more complicated
equations. In particular, we will consider a linear and a nonlinear
one-dimensional diffusion(-reaction) equation with variable coefficients.
\subsection{Linear diffusion}
We start by considering the following one-dimensional linear diffusion
  equation with space dependent coefficients
  \begin{equation}\label{eq:adlinearzero}
    \left\{
    \begin{aligned}
      \partial_t u(t,x)&=a(x)\partial_{xx}u(t,x),&x\in(-\pi,\pi),\ t\in[0,T],\\
      u(0,x)&=\sin x ,
    \end{aligned}\right.
  \end{equation}
  subject to periodic boundary conditions.
  In the experiments we use $a(x)=1+10\sin^2 x $.
As described above, we then rewrite equation~\eqref{eq:adlinearzero} as
\begin{equation} \label{eq:ex0stab}
  \partial_tu(t,x)=\underbrace{\lambda a^{\mathrm{max}}\partial_{xx} u(t,x)}_{A u(t,x)}+
  \underbrace{(a(x)-\lambda a^{\mathrm{max}})\partial_{xx}u(t,x)}_{g(x,u(t,x))},
\end{equation}
with $\lambda\in[0,1]$.

The structure of the equation allows for an effective discretization in
space by means of a Fourier spectral technique. In particular, we denote with
$N$ the number of Fourier modes.
Then, the temporal schemes studied in section~\ref{sec:linstaban} and resumed
in the appendix can be applied in a straightforward manner,
with the computation of derivatives and matrix functions
by pointwise operations on Fourier coefficients.

Here, we verify that the theoretical lower bounds for $\lambda$ found in
section~\ref{sec:linstaban} also
apply to this space dependent coefficients diffusion equation.
The actual simulations have been carried out with $N=2^{12}$ and $T=1/40$, and
the achieved errors for different $\lambda$ and varying number of time steps
$m=2^\ell$, with $\ell=4,6,8,\ldots,14$,
have been measured at the final time $T$ in the infinity norm,
relatively with respect to the exact solution of
equation~\eqref{eq:adlinearzero}.
The results are collected in {\color{black}Figures~\ref{fig:ad1dex0_diffdeltaa} 
and~\ref{fig:ad1dex0_diffdeltab}}. First of all,
we observe that all the considered exponential methods show the expected order
of convergence (in particular also the newly derived stabilized Lawson
schemes).
Then, we also clearly see that, as $\lambda$ decreases, some
methods fail to be unconditionally stable,
{\color{black} producing large errors when decreasing the time step size
  (for visualization reasons, we restrict the plot
  to values below $10^{-1}$)}.
In particular, if we compare with
the bounds resumed in Table~\ref{tab:stab}, we observe that the values found
can be applied sharply also to the case of equation~\eqref{eq:adlinearzero}.
Indeed, up to $\lambda=1/2$ all the methods are stable. Then,
if we further decrease $\lambda$, some methods start to blow up when
incrementing the number of time steps, in accordance to what presented in
Table~\ref{tab:stab}
({\color{black}for instance, at $\lambda=1/3$ we clearly see that the exponential
Euler scheme and the \textsc{erk2p2} method are not unconditionally stable}).
Finally, as predicted by the linear analysis, for $\lambda<\lambda^{\textsc{sl2}}$
all the schemes {\color{black}lose the unconditional stability property}.

\begin{figure}[!hp]
  \centering
%
%
\definecolor{mycolor1}{rgb}{0.00000,0.44700,0.74100}%
\definecolor{mycolor2}{rgb}{255,0,0}%
\definecolor{mycolor3}{rgb}{0.92900,0.69400,0.12500}%
\definecolor{mycolor4}{rgb}{0.49400,0.18400,0.55600}%
\definecolor{mycolor5}{rgb}{0.46600,0.67400,0.18800}%
\definecolor{mycolor6}{rgb}{0.30100,0.74500,0.93300}%
\definecolor{mycolor7}{rgb}{0.63500,0.07800,0.18400}%
\definecolor{mycolor8}{rgb}{255,0,205}%
\begin{tikzpicture}

\begin{axis}[%
width=2.251in,
height=3.545in,
at={(0.948in,0.573in)},
scale only axis,
xmode=log,
xmin=10,
xmax=20000,
xminorticks=true,
xlabel style={font=\color{white!15!black}},
xlabel={$m$},
ymode=log,
ymin=1e-11,
ymax=1e-1,
yminorticks=true,
ylabel style={font=\color{white!15!black}},
ylabel={Error},
axis background/.style={fill=white},
title={$\boldsymbol{\lambda=1}$},
]
\addplot [color=mycolor1, line width=1.1pt, mark size=3.5pt, mark=o, mark options={solid, mycolor1}]
  table[row sep=crcr]{%
16	0.00423707305496343\\
64.0000000000001	0.00108108655299582\\
256	0.000270511481595112\\
1024	6.7626916419749e-05\\
4096	1.69063292842301e-05\\
16384	4.22655409102637e-06\\
};

\addplot [color=mycolor2, line width=1.1pt, mark size=3.5pt, mark=x, mark options={solid, mycolor2}]
  table[row sep=crcr]{%
16	0.00916012013080849\\
64.0000000000001	0.00288149591423319\\
256	0.000793418029777495\\
1024	0.000204419218019381\\
4096	5.14937168754167e-05\\
16384	1.2897202022864e-05\\
};

\addplot [color=mycolor3, line width=1.1pt, mark size=3.5pt, mark=+, mark options={solid, mycolor3}]
  table[row sep=crcr]{%
16	0.00192512919614741\\
64	0.000490216178041984\\
256	0.000122988681635148\\
1024	3.07721099012625e-05\\
4096	7.69454863964331e-06\\
16384	1.92373412585277e-06\\
};

\addplot [color=mycolor4, line width=1.1pt, mark size=3.5pt, mark=diamond, mark options={solid, mycolor4}]
  table[row sep=crcr]{%
16	0.000285071986247785\\
64	2.32555533473746e-05\\
256	1.59420350459551e-06\\
1024	1.02312523570604e-07\\
4096	6.44042420217275e-09\\
16384	4.05011554485088e-10\\
};

\addplot [color=mycolor5, line width=1.1pt, mark size=3.5pt, mark=triangle, mark options={solid, mycolor5}]
  table[row sep=crcr]{%
16	0.000645242563331627\\
64	5.33674120768873e-05\\
256	3.69226405602089e-06\\
1024	2.37873737189419e-07\\
4096	1.49880003548267e-08\\
16384	9.40426068901151e-10\\
};

\addplot [color=mycolor6, line width=1.1pt, mark size=3.5pt, mark=triangle, mark options={solid, rotate=270, mycolor6}]
  table[row sep=crcr]{%
16	0.000377985069814101\\
64.0000000000001	3.14910144760689e-05\\
256	2.17859068745e-06\\
1024	1.40244963567249e-07\\
4096.00000000001	8.83486629638264e-09\\
16384	5.55039286402483e-10\\
};

\addplot [color=mycolor7, line width=1.1pt, mark size=3.5pt, mark=triangle, mark options={solid, rotate=90, mycolor7}]
  table[row sep=crcr]{%
16	0.00111714158295021\\
64.0000000000001	0.000125324878516901\\
256	9.9318195277156e-06\\
1024	6.61272902653368e-07\\
4096.00000000001	4.19523796011746e-08\\
16384	2.63283396862452e-09\\
};

\addplot [color=mycolor8, line width=1.1pt, mark size = 2.5pt, mark=triangle, mark options={solid, rotate=180, mycolor8}]
  table[row sep=crcr]{%
16	0.000830893229003044\\
64.0000000000001	0.000100305683926654\\
256	8.19746560465537e-06\\
1024	5.50790354654275e-07\\
4096.00000000001	3.50180862789402e-08\\
16384	2.19896139644816e-09\\
};
\end{axis}

\end{tikzpicture}%
%
%
\definecolor{mycolor1}{rgb}{0.00000,0.44700,0.74100}%
\definecolor{mycolor2}{rgb}{255,0,0}%
\definecolor{mycolor3}{rgb}{0.92900,0.69400,0.12500}%
\definecolor{mycolor4}{rgb}{0.49400,0.18400,0.55600}%
\definecolor{mycolor5}{rgb}{0.46600,0.67400,0.18800}%
\definecolor{mycolor6}{rgb}{0.30100,0.74500,0.93300}%
\definecolor{mycolor7}{rgb}{0.63500,0.07800,0.18400}%
\definecolor{mycolor8}{rgb}{255,0,205}%
\begin{tikzpicture}

\begin{axis}[%
width=2.251in,
height=3.545in,
at={(0.948in,0.573in)},
scale only axis,
xmode=log,
xmin=10,
xmax=20000,
xminorticks=true,
xlabel style={font=\color{white!15!black}},
xlabel={$m$},
ymode=log,
ymin=1e-11,
ymax=1e-1,
yminorticks=true,
ylabel style={font=\color{white!15!black}},
ylabel={Error},
axis background/.style={fill=white},
title={$\boldsymbol{\lambda=1/2}$},
]
\addplot [color=mycolor1, line width=1.1pt, mark size=3.5pt, mark=o, mark options={solid, mycolor1}]
  table[row sep=crcr]{%
16	0.00199874808226911\\
64.0000000000001	0.000494878466287692\\
256	0.000123281833768064\\
1024	3.07904737083712e-05\\
4096	7.69569703774469e-06\\
16384	1.92380591132402e-06\\
};

\addplot [color=mycolor2, line width=1.1pt, mark size=3.5pt, mark=x, mark options={solid, mycolor2}]
  table[row sep=crcr]{%
16	0.0035982974109011\\
64.0000000000001	0.000999922408098618\\
256	0.000258527584778083\\
1024	6.51966856462329e-05\\
4096	1.63343244291428e-05\\
16384	4.0857660323972e-06\\
};

\addplot [color=mycolor3, line width=1.1pt, mark size=3.5pt, mark=+, mark options={solid, mycolor3}]
  table[row sep=crcr]{%
16	0.00120770813413409\\
64.0000000000001	0.000298735157089986\\
256	7.44879541391514e-05\\
1024	1.86098321406297e-05\\
4096	4.65172510406573e-06\\
16384	1.16291136473732e-06\\
};

\addplot [color=mycolor4, line width=1.1pt, mark size=3.5pt, mark=diamond, mark options={solid, mycolor4}]
  table[row sep=crcr]{%
16	6.85721873297132e-05\\
64.0000000000001	5.06678401438467e-06\\
256	3.33800872619692e-07\\
1024	2.11636235540015e-08\\
4096.00000000001	1.32924830256509e-09\\
16384	8.48747407446946e-11\\
};

\addplot [color=mycolor5, line width=1.1pt, mark size=3.5pt, mark=triangle, mark options={solid, mycolor5}]
  table[row sep=crcr]{%
16	0.000171252803424007\\
64.0000000000001	1.27615012906381e-05\\
256	8.45833116706716e-07\\
1024	5.37362489935689e-08\\
4096.00000000001	3.37431910856005e-09\\
16384	2.12813851818298e-10\\
};

\addplot [color=mycolor6, line width=1.1pt, mark size=3.5pt, mark=triangle, mark options={solid, rotate=270, mycolor6}]
  table[row sep=crcr]{%
16	9.50728081660571e-05\\
64.0000000000001	7.12568566353044e-06\\
256	4.72053282975986e-07\\
1024	2.99780154620206e-08\\
4096.00000000001	1.88297900623374e-09\\
16384	1.1951291446954e-10\\
};

\addplot [color=mycolor7, line width=1.1pt, mark size=3.5pt, mark=triangle, mark options={solid, rotate=90, mycolor7}]
  table[row sep=crcr]{%
16	0.000240845615881032\\
64.0000000000001	2.08169751824487e-05\\
256	1.43858089228628e-06\\
1024	9.22345710450661e-08\\
4096.00000000001	5.80288048845724e-09\\
16384	3.64455078610185e-10\\
};

\addplot [color=mycolor8, line width=1.1pt, mark size=3.5pt, mark=triangle, mark options={solid, rotate=180, mycolor8}]
  table[row sep=crcr]{%
16	0.000141591106454595\\
64.0000000000001	1.35218166096352e-05\\
256	9.65137253713493e-07\\
1024	6.24016250944227e-08\\
4096.00000000001	3.93454913691426e-09\\
16384	2.47569585839744e-10\\
};

\end{axis}
\end{tikzpicture}%
%
%
\definecolor{mycolor1}{rgb}{0.00000,0.44700,0.74100}%
\definecolor{mycolor2}{rgb}{255,0,0}%
\definecolor{mycolor3}{rgb}{0.92900,0.69400,0.12500}%
\definecolor{mycolor4}{rgb}{0.49400,0.18400,0.55600}%
\definecolor{mycolor5}{rgb}{0.46600,0.67400,0.18800}%
\definecolor{mycolor6}{rgb}{0.30100,0.74500,0.93300}%
\definecolor{mycolor7}{rgb}{0.63500,0.07800,0.18400}%
\definecolor{mycolor8}{rgb}{255,0,205}%
\begin{tikzpicture}

\begin{axis}[%
width=2.251in,
height=3.545in,
at={(0.948in,0.573in)},
scale only axis,
xmode=log,
xmin=10,
xmax=20000,
xminorticks=true,
xlabel style={font=\color{white!15!black}},
xlabel={$m$},
ymode=log,
ymin=1e-11,
ymax=1e-1,
yminorticks=true,
ylabel style={font=\color{white!15!black}},
ylabel={Error},
axis background/.style={fill=white},
title={$\boldsymbol{\lambda=1/3}$},
]
\addplot [color=mycolor1, line width=1.1pt, mark size=3.5pt, mark=o, mark options={solid, mycolor1}]
  table[row sep=crcr]{%
16	0.00121295800258889\\
64.0000000000001	0.000298000405612343\\
256	7.41330536453391e-05\\
1024	1.85095261311084e-05\\
4096	4.62588571827088e-06\\
16384	1.15638018180848e-06\\
};

\addplot [color=mycolor2, line width=1.1pt, mark size=3.5pt, mark=x, mark options={solid, mycolor2}]
  table[row sep=crcr]{%
16	0.00199926133822241\\
64.0000000000001	0.000530120548956881\\
256	0.000134752360774442\\
1024	3.38298406721121e-05\\
4096	8.46638257189193e-06\\
16384	2.11715377704209e-06\\
};

\addplot [color=mycolor3, line width=1.1pt, mark size=3.5pt, mark=+, mark options={solid, mycolor3}]
  table[row sep=crcr]{%
16	0.00161332835038153\\
35.2246181271557	12.5892541179417\\
};

\addplot [color=mycolor4, line width=1.1pt, mark size=3.5pt, mark=diamond, mark options={solid, mycolor4}]
  table[row sep=crcr]{%
16	2.66878040455841e-05\\
21.2691463167461	12.5892541179417\\
};

\addplot [color=mycolor5, line width=1.1pt, mark size=3.5pt, mark=triangle, mark options={solid, mycolor5}]
  table[row sep=crcr]{%
16	7.38266119038114e-05\\
64.0000000000001	5.25156656449825e-06\\
256	3.41977607644209e-07\\
1024	2.16136344292725e-08\\
4096.00000000001	1.35629883197778e-09\\
16384	8.64559565894357e-11\\
};

\addplot [color=mycolor6, line width=1.1pt, mark size=3.5pt, mark=triangle, mark options={solid, rotate=270, mycolor6}]
  table[row sep=crcr]{%
16	3.85836099047089e-05\\
64.0000000000001	2.75703060055272e-06\\
256	1.79499449252316e-07\\
1024	1.13432401238343e-08\\
4096.00000000001	7.12548798654149e-10\\
16384	4.62212524137113e-11\\
};

\addplot [color=mycolor7, line width=1.1pt, mark size=3.5pt, mark=triangle, mark options={solid, rotate=90, mycolor7}]
  table[row sep=crcr]{%
16	8.89883775933262e-05\\
64.0000000000001	6.86248790932651e-06\\
256	4.55219890560559e-07\\
1024	2.88895126282396e-08\\
4096.00000000001	1.81391621319197e-09\\
16384	1.14822374544655e-10\\
};

\addplot [color=mycolor8, line width=1.1pt, mark size=3.5pt, mark=triangle, mark options={solid, rotate=180, mycolor8}]
  table[row sep=crcr]{%
16	3.73941053200776e-05\\
64.0000000000001	3.32665322365956e-06\\
256	2.29673115524828e-07\\
1024	1.47246737377834e-08\\
4096.00000000001	9.2734665170747e-10\\
16384	5.92804030130875e-11\\
};

\end{axis}
\end{tikzpicture}%
%
%
\definecolor{mycolor1}{rgb}{0.00000,0.44700,0.74100}%
\definecolor{mycolor2}{rgb}{255,0,0}%
\definecolor{mycolor3}{rgb}{0.92900,0.69400,0.12500}%
\definecolor{mycolor4}{rgb}{0.49400,0.18400,0.55600}%
\definecolor{mycolor5}{rgb}{0.46600,0.67400,0.18800}%
\definecolor{mycolor6}{rgb}{0.30100,0.74500,0.93300}%
\definecolor{mycolor7}{rgb}{0.63500,0.07800,0.18400}%
\definecolor{mycolor8}{rgb}{255,0,205}%
\begin{tikzpicture}

\begin{axis}[%
width=2.251in,
height=3.545in,
at={(0.948in,0.573in)},
scale only axis,
xmode=log,
xmin=10,
xmax=20000,
xminorticks=true,
xlabel style={font=\color{white!15!black}},
xlabel={$m$},
ymode=log,
ymin=1e-11,
ymax=1e-1,
yminorticks=true,
ylabel style={font=\color{white!15!black}},
ylabel={Error},
axis background/.style={fill=white},
title={$\boldsymbol{\lambda=0.301}$},
legend style={at={(0.07,0.03)}, font=\scriptsize, anchor=south west, legend cell align=left, align=left, draw=white!15!black, legend columns=1}
]
\addplot [color=mycolor1, line width=1.1pt, mark size=3.5pt, mark=o, mark options={solid, mycolor1}]
  table[row sep=crcr]{%
16	0.00105865256664539\\
64.0000000000001	0.000259734435862798\\
256	6.45967755646223e-05\\
1024	1.61275468409584e-05\\
4096	4.03052479468083e-06\\
16384	1.0075483554887e-06\\
};
\addlegendentry{\textsc{sle}}

\addplot [color=mycolor2, line width=1.1pt, mark size=3.5pt, mark=x, mark options={solid, mycolor2}]
  table[row sep=crcr]{%
16	0.00171275442711519\\
64.0000000000001	0.000450175723005691\\
256	0.000114113504167043\\
1024	2.86283392938143e-05\\
4096	7.16337506536909e-06\\
16384	1.7912378100778e-06\\
};
\addlegendentry{\textsc{le}}

\addplot [color=mycolor3, line width=1.1pt, mark size=3.5pt, mark=+, mark options={solid, mycolor3}]
  table[row sep=crcr]{%
16	0.00169184749768018\\
26.1106992760665	12.5892541179417\\
};
\addlegendentry{\textsc{ee}}

\addplot [color=mycolor4, line width=1.1pt, mark size=3.5pt, mark=diamond, mark options={solid, mycolor4}]
  table[row sep=crcr]{%
16	0.000312437822491241\\
19.202841613847	12.5892541179417\\
};
\addlegendentry{\textsc{erk2p2}}

\addplot [color=mycolor5, line width=1.1pt, mark size=3.5pt, mark=triangle, mark options={solid, mycolor5}]
  table[row sep=crcr]{%
16	5.92744077547448e-05\\
64.0000000000001	4.17119958089086e-06\\
256	2.70570537808496e-07\\
1024	1.70822184598647e-08\\
4096.00000000001	1.07195050559046e-09\\
16384	6.86486098874435e-11\\
};
\addlegendentry{\textsc{sl2}}

\addplot [color=mycolor6, line width=1.1pt, mark size=3.5pt, mark=triangle, mark options={solid, rotate=270, mycolor6}]
  table[row sep=crcr]{%
16	3.03387434736157e-05\\
64.0000000000001	0.000502269585121561\\
75.9224557673045	12.5892541179416\\
};
\addlegendentry{\textsc{erk2p1}}

\addplot [color=mycolor7, line width=1.1pt, mark size=3.5pt, mark=triangle, mark options={solid, rotate=90, mycolor7}]
  table[row sep=crcr]{%
16	6.84061940092523e-05\\
64.0000000000001	5.15143340858422e-06\\
256	3.39170052380535e-07\\
1024	2.14843112186925e-08\\
4096.00000000001	1.34863686511182e-09\\
16384	8.55555859162807e-11\\
};
\addlegendentry{\textsc{l2a}}

\addplot [color=mycolor8, line width=1.1pt, mark size=3.5pt, mark=triangle, mark options={solid, rotate=180, mycolor8}]
  table[row sep=crcr]{%
16	2.78798968996643e-05\\
64.0000000000001	2.20032062329372e-06\\
256	1.51509748631173e-07\\
1024	9.70690040705074e-09\\
4096.00000000001	6.11587896512005e-10\\
16384	5.04437990933306e-11\\
};
\addlegendentry{\textsc{l2b}}

\end{axis}
\end{tikzpicture}%
  \caption{Solution of problem~\eqref{eq:adlinearzero} rewritten as
           equation~\eqref{eq:ex0stab},
           with different schemes, decreasing $\lambda$ and varying number of
           time steps $m$. 
            {\color{black}The common legend is displayed in the bottom right
              plot. Missing marks mean that the error is larger than
               $10^{-1}$, i.e., the method fails to be unconditionally
            stable.}}
  \label{fig:ad1dex0_diffdeltaa}
\end{figure}
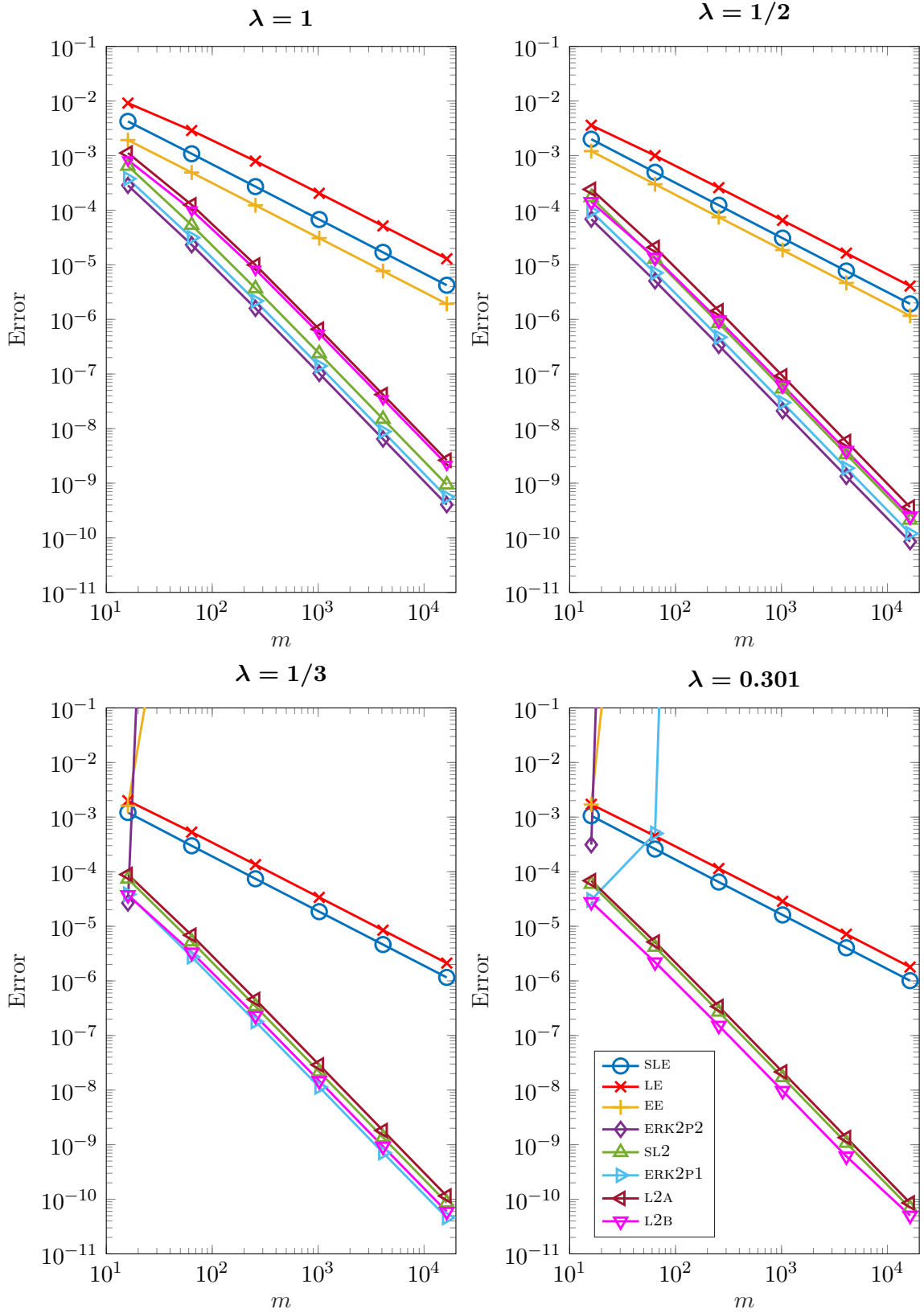
\begin{figure}[!hp]
  \centering
%
%
\definecolor{mycolor1}{rgb}{0.00000,0.44700,0.74100}%
\definecolor{mycolor2}{rgb}{255,0,0}%
\definecolor{mycolor3}{rgb}{0.92900,0.69400,0.12500}%
\definecolor{mycolor4}{rgb}{0.49400,0.18400,0.55600}%
\definecolor{mycolor5}{rgb}{0.46600,0.67400,0.18800}%
\definecolor{mycolor6}{rgb}{0.30100,0.74500,0.93300}%
\definecolor{mycolor7}{rgb}{0.63500,0.07800,0.18400}%
\definecolor{mycolor8}{rgb}{255,0,205}%
\begin{tikzpicture}

\begin{axis}[%
width=2.251in,
height=3.545in,
at={(0.948in,0.573in)},
scale only axis,
xmode=log,
xmin=10,
xmax=20000,
xminorticks=true,
xlabel style={font=\color{white!15!black}},
xlabel={$m$},
ymode=log,
ymin=1e-11,
ymax=1e-1,
yminorticks=true,
ylabel style={font=\color{white!15!black}},
ylabel={Error},
axis background/.style={fill=white},
title={$\boldsymbol{\lambda=0.218}$},
]
\addplot [color=mycolor1, line width=1.1pt, mark size=3.5pt, mark=o, mark options={solid, mycolor1}]
  table[row sep=crcr]{%
16	0.00136157072083516\\
64.0000000000001	0.000337072688365754\\
256	8.40641856715537e-05\\
1024	2.10033775755172e-05\\
4096	5.25007936984525e-06\\
16384	1.31249796785149e-06\\
};

\addplot [color=mycolor2, line width=1.1pt, mark size=3.5pt, mark=x, mark options={solid, mycolor2}]
  table[row sep=crcr]{%
16	0.00139947356560928\\
64.0000000000001	0.000346465068523869\\
256	8.64071571382091e-05\\
1024	2.15888010434998e-05\\
4096	5.39641496775279e-06\\
16384	1.34907940393613e-06\\
};

\addplot [color=mycolor3, line width=1.1pt, mark size=3.5pt, mark=+, mark options={solid, mycolor3}]
  table[row sep=crcr]{%
16	0.0018846696308818\\
20.1730376100773	12.5892541179418\\
};


\addplot [color=mycolor5, line width=1.1pt, mark size=3.5pt, mark=triangle, mark options={solid, mycolor5}]
  table[row sep=crcr]{%
16	2.90161154184961e-05\\
64.0000000000001	1.97520127008208e-06\\
256	1.26690240540664e-07\\
1024	7.97407705463469e-09\\
4096.00000000001	5.00763660898246e-10\\
16384	3.28645832925689e-11\\
};

\addplot [color=mycolor6, line width=1.1pt, mark size=3.5pt, mark=triangle, mark options={solid, rotate=270, mycolor6}]
  table[row sep=crcr]{%
16	0.000176240400696085\\
19.8757222840139	12.5892541179416\\
};

\addplot [color=mycolor7, line width=1.1pt, mark size=3.5pt, mark=triangle, mark options={solid, rotate=90, mycolor7}]
  table[row sep=crcr]{%
16	2.87034351417647e-05\\
28.7596596347026	12.5892541179417\\
};

\addplot [color=mycolor8, line width=1.1pt, mark size=3.5pt, mark=triangle, mark options={solid, rotate=180, mycolor8}]
  table[row sep=crcr]{%
16	2.08618740774127e-05\\
28.505815317335	12.5892541179417\\
};

\end{axis}
\end{tikzpicture}%
%
%
\definecolor{mycolor1}{rgb}{0.00000,0.44700,0.74100}%
\definecolor{mycolor2}{rgb}{255,0,0}%
\definecolor{mycolor3}{rgb}{0.92900,0.69400,0.12500}%
\definecolor{mycolor4}{rgb}{0.49400,0.18400,0.55600}%
\definecolor{mycolor5}{rgb}{0.46600,0.67400,0.18800}%
\definecolor{mycolor6}{rgb}{0.30100,0.74500,0.93300}%
\definecolor{mycolor7}{rgb}{0.63500,0.07800,0.18400}%
\definecolor{mycolor8}{rgb}{255,0,205}%
\begin{tikzpicture}

\begin{axis}[%
width=2.251in,
height=3.545in,
at={(0.948in,0.573in)},
scale only axis,
xmode=log,
xmin=10,
xmax=20000,
xminorticks=true,
xlabel style={font=\color{white!15!black}},
xlabel={$m$},
ymode=log,
ymin=1e-11,
ymax=1e-1,
yminorticks=true,
ylabel style={font=\color{white!15!black}},
ylabel={Error},
axis background/.style={fill=white},
title={$\boldsymbol{\lambda=1/(2\mathrm{e)}}$},
]
\addplot [color=mycolor1, line width=1.1pt, mark size=3.5pt, mark=o, mark options={solid, mycolor1}]
  table[row sep=crcr]{%
16	0.00152794454512032\\
64.0000000000001	0.000378039213948396\\
256	9.42669766767329e-05\\
1024	2.35516531695002e-05\\
4096	5.8869969619872e-06\\
16384	1.47171791252177e-06\\
};

\addplot [color=mycolor2, line width=1.1pt, mark size=3.5pt, mark=x, mark options={solid, mycolor2}]
  table[row sep=crcr]{%
16	0.0015547390329062\\
64	0.000384713767930697\\
103.69284500531	12.5892541179417\\
};

\addplot [color=mycolor3, line width=1.1pt, mark size=3.5pt, mark=+, mark options={solid, mycolor3}]
  table[row sep=crcr]{%
16	0.00197588065669322\\
19.2237166021692	12.5892541179416\\
};


\addplot [color=mycolor5, line width=1.1pt, mark size=3.5pt, mark=triangle, mark options={solid, mycolor5}]
  table[row sep=crcr]{%
16	1.97029392809251e-05\\
64.0000000000001	1.3179094544265e-06\\
256	8.40610072683831e-08\\
1024	5.28339765973368e-09\\
4096.00000000001	3.32143348340766e-10\\
16384	2.69498637986954e-11\\
};

\addplot [color=mycolor6, line width=1.1pt, mark size=3.5pt, mark=triangle, mark options={solid, rotate=270, mycolor6}]
  table[row sep=crcr]{%
16	0.0772531253519745\\
17.2359075084677	12.5892541179417\\
};

\addplot [color=mycolor7, line width=1.1pt, mark size=3.5pt, mark=triangle, mark options={solid, rotate=90, mycolor7}]
  table[row sep=crcr]{%
16	0.000884466455828353\\
20.2184493312236	12.5892541179417\\
};

\addplot [color=mycolor8, line width=1.1pt, mark size=3.5pt, mark=triangle, mark options={solid, rotate=180, mycolor8}]
  table[row sep=crcr]{%
16	0.00167367680968051\\
19.8211180961655	12.5892541179418\\
};

\end{axis}
\end{tikzpicture}%
%
%
\definecolor{mycolor1}{rgb}{0.00000,0.44700,0.74100}%
\definecolor{mycolor2}{rgb}{255,0,0}%
\definecolor{mycolor3}{rgb}{0.92900,0.69400,0.12500}%
\definecolor{mycolor4}{rgb}{0.49400,0.18400,0.55600}%
\definecolor{mycolor5}{rgb}{0.46600,0.67400,0.18800}%
\definecolor{mycolor6}{rgb}{0.30100,0.74500,0.93300}%
\definecolor{mycolor7}{rgb}{0.63500,0.07800,0.18400}%
\definecolor{mycolor8}{rgb}{255,0,205}%
\begin{tikzpicture}

\begin{axis}[%
width=2.251in,
height=3.545in,
at={(0.959in,0.568in)},
scale only axis,
xmode=log,
xmin=10,
xmax=20000,
xminorticks=true,
xlabel style={font=\color{white!15!black}},
xlabel={$m$},
ymode=log,
ymin=1e-11,
ymax=1e-1,
yminorticks=true,
ylabel style={font=\color{white!15!black}},
ylabel={Error},
axis background/.style={fill=white},
title={$\boldsymbol{\lambda=0.183}$},
]
\addplot [color=mycolor1, line width=1.1pt, mark size=3.5pt, mark=o, mark options={solid, mycolor1}]
  table[row sep=crcr]{%
16	0.00153253457142279\\
64.0000000000001	0.000379169465279559\\
256	9.45484701480841e-05\\
1024	2.36219598538853e-05\\
4096	5.90456948355975e-06\\
5553.53686585094	12.5892541179417\\
};

\addplot [color=mycolor2, line width=1.1pt, mark size=3.5pt, mark=x, mark options={solid, mycolor2}]
  table[row sep=crcr]{%
16	0.00155905043130211\\
64	0.000385775705594358\\
100.632075044529	12.5892541179417\\
};

\addplot [color=mycolor3, line width=1.1pt, mark size=3.5pt, mark=+, mark options={solid, mycolor3}]
  table[row sep=crcr]{%
16	0.00197986382978937\\
19.2014297842096	12.5892541179417\\
};


\addplot [color=mycolor5, line width=1.1pt, mark size=3.5pt, mark=triangle, mark options={solid, mycolor5}]
  table[row sep=crcr]{%
16	1.9658448736235e-05\\
64.0000000000001	1.30179959270465e-06\\
256	8.30198161843351e-08\\
1024	5.21773069789467e-09\\
4096.00000000001	3.28029229696882e-10\\
16384	2.68909958401947e-11\\
};

\addplot [color=mycolor6, line width=1.1pt, mark size=3.5pt, mark=triangle, mark options={solid, rotate=270, mycolor6}]
  table[row sep=crcr]{%
16	0.0900547383212365\\
17.1993533891795	12.5892541179417\\
};

\addplot [color=mycolor7, line width=1.1pt, mark size=3.5pt, mark=triangle, mark options={solid, rotate=90, mycolor7}]
  table[row sep=crcr]{%
16	0.00102741527207708\\
20.0776675215351	12.5892541179417\\
};

\addplot [color=mycolor8, line width=1.1pt, mark size=3.5pt, mark=triangle, mark options={solid, rotate=180, mycolor8}]
  table[row sep=crcr]{%
16	0.0019448457363291\\
19.7762287252291	12.5892541179417\\
};

\end{axis}
\end{tikzpicture}%
%
%
\definecolor{mycolor1}{rgb}{0.00000,0.44700,0.74100}%
\definecolor{mycolor2}{rgb}{255,0,0}%
\definecolor{mycolor3}{rgb}{0.92900,0.69400,0.12500}%
\definecolor{mycolor4}{rgb}{0.49400,0.18400,0.55600}%
\definecolor{mycolor5}{rgb}{0.46600,0.67400,0.18800}%
\definecolor{mycolor6}{rgb}{0.30100,0.74500,0.93300}%
\definecolor{mycolor7}{rgb}{0.63500,0.07800,0.18400}%
\definecolor{mycolor8}{rgb}{255,0,205}%
\begin{tikzpicture}

\begin{axis}[%
width=2.251in,
height=3.545in,
at={(0.959in,0.568in)},
scale only axis,
xmode=log,
xmin=10,
xmax=20000,
xminorticks=true,
xlabel style={font=\color{white!15!black}},
xlabel={$m$},
ymode=log,
ymin=1e-11,
ymax=1e-1,
yminorticks=true,
ylabel style={font=\color{white!15!black}},
ylabel={Error},
axis background/.style={fill=white},
title={$\boldsymbol{\lambda=0.17}$},
legend style={at={(0.5,0.08)}, font=\scriptsize, anchor=south west, legend cell align=left, align=left, draw=white!15!black, legend columns=1}
]
\addplot [color=mycolor1, line width=1.1pt, mark size=3.5pt, mark=o, mark options={solid, mycolor1}]
  table[row sep=crcr]{%
16	0.00159603050915978\\
64	0.000394805182136802\\
256	0.000667843038369431\\
293.573294034139	12.5892541179417\\
};
\addlegendentry{\textsc{sle}}

\addplot [color=mycolor2, line width=1.1pt, mark size=3.5pt, mark=x, mark options={solid, mycolor2}]
  table[row sep=crcr]{%
16	0.00161883979208275\\
64	0.000400490771727264\\
80.9714738716388	12.5892541179417\\
};
\addlegendentry{\textsc{le}}

\addplot [color=mycolor3, line width=1.1pt, mark size=3.5pt, mark=+, mark options={solid, mycolor3}]
  table[row sep=crcr]{%
16	0.00205348101133825\\
18.9094163972867	12.5892541179418\\
};
\addlegendentry{\textsc{ee}}

\addplot [color=mycolor4, line width=1.1pt, mark size=3.5pt, mark=diamond, mark options={solid, mycolor4}]
  table[row sep=crcr]{%
16	1e0\\
};
\addlegendentry{\textsc{erk2p2}}

\addplot [color=mycolor5, line width=1.1pt, mark size=3.5pt, mark=triangle, mark options={solid, mycolor5}]
  table[row sep=crcr]{%
16	2.09610027254457e-05\\
64.0000000000001	1.26704814926524e-06\\
122.022374875128	12.5892541179417\\
};
\addlegendentry{\textsc{sl2}}

\addplot [color=mycolor6, line width=1.1pt, mark size=3.5pt, mark=triangle, mark options={solid, rotate=270, mycolor6}]
  table[row sep=crcr]{%
16	1e0\\
};
\addlegendentry{\textsc{erk2p1}}

\addplot [color=mycolor7, line width=1.1pt, mark size=3.5pt, mark=triangle, mark options={solid, rotate=90, mycolor7}]
  table[row sep=crcr]{%
16	0.00863508545848884\\
18.5491696915454	12.5892541179417\\
};
\addlegendentry{\textsc{l2a}}

\addplot [color=mycolor8, line width=1.1pt, mark size=3.5pt, mark=triangle, mark options={solid, rotate=180, mycolor8}]
  table[row sep=crcr]{%
16	0.0161177106677327\\
18.2974047681636	12.5892541179417\\
};
\addlegendentry{\textsc{l2b}}

\end{axis}
\end{tikzpicture}%
  \caption{Solution of problem~\eqref{eq:adlinearzero} rewritten as
           equation~\eqref{eq:ex0stab},
           with different schemes, decreasing $\lambda$ and varying number of
           time steps $m$. 
            {\color{black}The common legend is displayed in the bottom right
            plot. Missing marks mean that the error is larger than
            $10^{-1}$, i.e., the method fails to be unconditionally
            stable.}}
  \label{fig:ad1dex0_diffdeltab}
\end{figure}

\subsection{Nonlinear diffusion-reaction}
  We now turn our attention to the following one-dimensional nonlinear
  diffusion-reaction equation
  \begin{equation}\label{eq:adnonlinear}
    \left\{
    \begin{aligned}
      \partial_t u(t,x)&=\partial_x(a(x)\partial_x u(t,x))+r(u(t,x)),&x\in(-\pi,\pi),\ t\in[0,T],\\
      u(0,x)&=\sin x,
    \end{aligned}\right.
  \end{equation}
  in an inhomogeneous medium with periodic boundary conditions, see
  reference~\cite{PX91}.
  Here we select $a(x)=1+10\sin^2 x$ as {\color{black}the} space
  dependent diffusion coefficient, and the reaction is of quadratic type
  $r(u)= u(1-u)$.
  Similarly to the previous example, we rewrite equation~\eqref{eq:adnonlinear}
  as
\begin{equation}\label{eq:ex1stab}
  \partial_tu(t,x)=\underbrace{\lambda a^{\mathrm{max}} \partial_{xx} u(t,x)}_{A u(t,x)}+
  \underbrace{\partial_x\left((a(x)-\lambda a^{\mathrm{max}})\partial_{x}u(t,x)\right)
  +r(u(t,x))}_{g(x,u(t,x))}.
\end{equation}
  We first discretize in space with a Fourier spectral
  method employing $N=2^{10}$ modes. Then, as for the previous example, the
  temporal schemes can be applied straightforwardly with pointwise operations
  on Fourier coefficients. We simulate until the final time $T=1/10$ with a
  number of time steps equal to $m=2^{12}$ for all the methods. We choose
  different values
  of $\lambda$ and we measure
  the relative errors in the infinity norm with respect to a reference solution
  computed with \textsc{erk2p2}
  as time integrator (applied to original equation~\eqref{eq:adnonlinear}
  semidiscretized in space with spectral finite differences, and with a sufficiently
  large number of time steps). We collect the results in
  Figure~\ref{fig:ad1dpx91_diffdelta}.
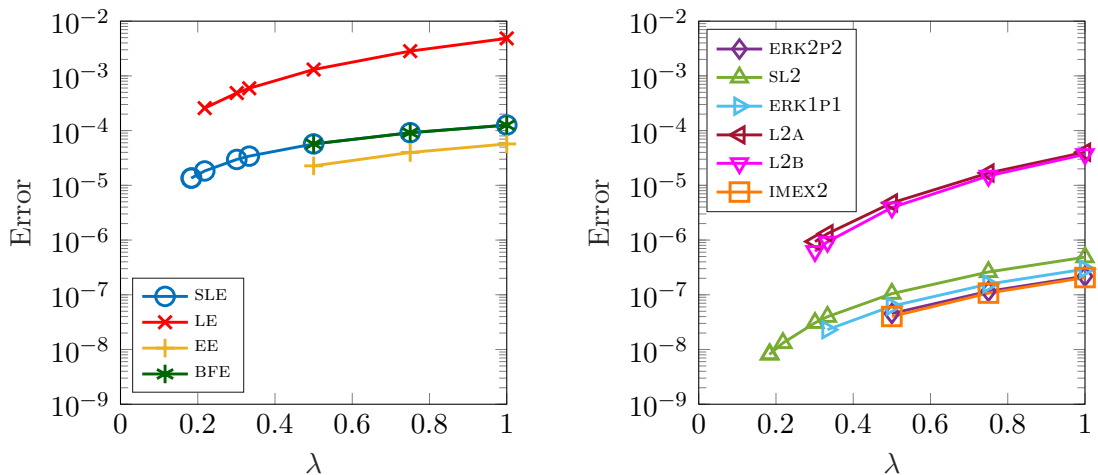
\begin{figure}[htb!]
  \centering
%
%
\definecolor{mycolor1}{rgb}{0.00000,0.44700,0.74100}%
\definecolor{mycolor2}{rgb}{255,0,0}%
\definecolor{mycolor3}{rgb}{0.92900,0.69400,0.12500}%
\definecolor{mycolor4}{rgb}{0.49400,0.18400,0.55600}%
\definecolor{mycolor5}{rgb}{0.46600,0.67400,0.18800}%
\definecolor{mycolor6}{rgb}{0.30100,0.74500,0.93300}%
\definecolor{mycolor7}{rgb}{0.63500,0.07800,0.18400}%
\definecolor{mycolor8}{rgb}{255,0,205}%
\definecolor{mycolor9}{RGB}{0,101,0}%
\definecolor{mycolor10}{RGB}{255,120,0}%
%
\begin{tikzpicture}

\begin{axis}[%
width=2.0in,
height=2.0in,
at={(1.006in,0.722in)},
scale only axis,
xmin=0,
xmax=1,
xlabel style={font=\color{white!15!black}},
xlabel={$\lambda$},
ymode=log,
ymin=1e-09,
ymax=0.01,
yminorticks=true,
ylabel style={font=\color{white!15!black}},
ylabel={Error},
axis background/.style={fill=white},
title style={font=\bfseries},
legend style={at={(0.03,0.03)}, anchor=south west, font=\scriptsize, legend cell align=left, align=left, draw=white!15!black}
]
\addplot [color=mycolor1, line width=1.1pt, mark size=3.5pt, mark=o, mark options={solid, mycolor1}]
  table[row sep=crcr]{%
1	0.000126214156559945\\
0.75	9.159432706208e-05\\
0.5	5.70882140624806e-05\\
0.333333333333333	3.41471469683753e-05\\
0.301	2.97032488312186e-05\\
0.218	1.83072276953998e-05\\
0.183939720585721	1.3637394950495e-05\\
0.183	1.35086205056413e-05\\
};
\addlegendentry{\textsc{sle}}

\addplot [color=mycolor2, line width=1.1pt, mark size=3.5pt, mark=x, mark options={solid, mycolor2}]
  table[row sep=crcr]{%
1	0.00484133061045172\\
0.75	0.00283241622777376\\
0.5	0.00130470469813112\\
0.333333333333333	0.000593226530434661\\
0.301	0.000485857165343979\\
0.218	0.000257438011667187\\
};
\addlegendentry{\textsc{le}}

\addplot [color=mycolor3, line width=1.1pt, mark size=3.5pt, mark=+, mark options={solid, mycolor3}]
  table[row sep=crcr]{%
1	5.7047227927228e-05\\
0.75	3.98546926296974e-05\\
0.5	2.26893194992049e-05\\
};
\addlegendentry{\textsc{ee}}

\addplot [color=mycolor9, line width=1.1pt, mark size=3.5pt, mark=asterisk, mark options={solid, mycolor9}]
  table[row sep=crcr]{%
1	0.000125721579024993\\
0.75	9.13179792691407e-05\\
0.5	5.6965716621269e-05\\
};
\addlegendentry{\textsc{bfe}}

\end{axis}

\begin{axis}[%
width=2.0in,
height=2.0in,
at={(4.0in,0.722in)},
scale only axis,
xmin=0,
xmax=1,
xlabel style={font=\color{white!15!black}},
xlabel={$\lambda$},
ymode=log,
ymin=1e-09,
ymax=0.01,
yminorticks=true,
ylabel style={font=\color{white!15!black}},
ylabel={Error},
axis background/.style={fill=white},
title style={font=\bfseries},
legend style={at={(0.02,0.98)}, anchor=north west, font=\scriptsize, legend cell align=left, align=left, draw=white!15!black}
]
\addplot [color=mycolor4, line width=1.1pt, mark size=3.5pt, mark=diamond, mark options={solid, mycolor4}]
  table[row sep=crcr]{%
1	2.18171665233022e-07\\
0.75	1.15636162487638e-07\\
0.5	4.54794073015633e-08\\
};
\addlegendentry{\textsc{erk2p2}}

\addplot [color=mycolor5, line width=1.1pt, mark size=3.5pt, mark=triangle, mark options={solid, mycolor5}]
  table[row sep=crcr]{%
1	4.86274401250389e-07\\
0.75	2.60852884000447e-07\\
0.5	1.04857304492054e-07\\
0.333333333333334	3.97504337269677e-08\\
0.301	3.07335778727722e-08\\
0.218	1.31520008095652e-08\\
0.183939720585721	8.22425080696326e-09\\
0.183	8.10631356900742e-09\\
};
\addlegendentry{\textsc{sl2}}

\addplot [color=mycolor6, line width=1.1pt, mark size=3.5pt, mark=triangle, mark options={solid, rotate=270, mycolor6}]
  table[row sep=crcr]{%
1	2.94511205133239e-07\\
0.75	1.56755711549669e-07\\
0.5	6.20916462320093e-08\\
0.333333333333333	2.30509896480954e-08\\
};
\addlegendentry{\textsc{erk1p1}}

\addplot [color=mycolor7, line width=1.1pt, mark size=3.5pt, mark=triangle, mark options={solid, rotate=90, mycolor7}]
  table[row sep=crcr]{%
1	4.02776923070653e-05\\
0.75	1.68029828516536e-05\\
0.5	4.78117833428809e-06\\
0.333333333333333	1.30993082623905e-06\\
0.301	9.37740739094311e-07\\
};
\addlegendentry{\textsc{l2a}}

\addplot [color=mycolor8, line width=1.1pt, mark size=3.5pt, mark=triangle, mark options={solid, rotate=180, mycolor8}]
  table[row sep=crcr]{%
1	3.69142379078708e-05\\
0.75	1.48927231763901e-05\\
0.5	3.92512093626621e-06\\
0.333333333333333	9.27428229656738e-07\\
0.301	6.25505479242713e-07\\
};
\addlegendentry{\textsc{l2b}}

\addplot [color=mycolor10, line width=1.1pt, mark size=3.5pt, mark=square, mark options={solid, mycolor10}]
  table[row sep=crcr]{%
1	2.06475713117735e-07\\
0.75	1.07308708086794e-07\\
0.5	4.01678694871665e-08\\
};
\addlegendentry{\textsc{imex2}}

\end{axis}

\end{tikzpicture}%
  \caption{Solution of problem~\eqref{eq:adnonlinear} rewritten as
           equation~\eqref{eq:ex1stab},
           with different schemes of first order (left), of second order (right),
           and varying value $\lambda$. The number of
           time steps
           is fixed to $m=2^{12}$ for each method. Missing marks mean that
           the error is {\color{black}larger than} $10^{-2}$.}
  \label{fig:ad1dpx91_diffdelta}
\end{figure}

Also in this nonlinear case, we observe that the linear analysis predicts
very sharply the amount of diffusion that we can consider in the operator
$A$ while keeping unconditional stability.
Moreover, it is also clear that each method becomes more precise as the value
$\lambda$ decreases, with in general a greater gain for second order methods
than the first order ones.
For completeness, we added also the results obtained with the IMEX schemes
mentioned in Remark~\ref{rem:imex}. Overall, in terms of achieved accuracy,
we observe that the stabilized
Lawson--Euler and the stabilized Lawson2 schemes perform best
among the methods of first and second order, respectively.

\section{Performance comparison}\label{sec:numexpstab}
In this section, we present performance results on two- and three-dimensional
{\color{black}semilinear} advec\-tion-diffusion-reaction equations. All the experiments have been performed
on an Intel Core i7-10750H CPU with six physical cores and 16GB of RAM,
using Matlab R2022a. Moreover, the errors showed in the plots are always
computed in the infinity norm at the final time relatively to a reference
solution computed with the Lawson2b integrator and sufficiently small time
step size.

For comparison, we perform a
  ``standard''
integration of the problem, i.e., we employ the time integrators summarized
in the appendix putting the entire diffusion and advection operators in the
linear part. The remainder $g$ is then equal to the reaction.
{\color{black}For computing the required actions of $\varphi$ functions,
we use} the general purpose \texttt{kiops} method~\cite{GRT18}, whose
Matlab implementation is available
at \url{https://gitlab.com/stephane.gaudreault/kiops/-/tree/master/}.
While there are many techniques available to compute the {\color{black}relevant} matrix
functions~\cite{al2011computing,CCZ20,CCZ22,LPR19,NW12},
\texttt{kiops}
is generally recognized to be among those {\color{black}that} perform best.
This routine requires an input tolerance, which we set as $\tau^{p+1}/100$,
where $\tau$ is the time step size and $p$ is the order of the time
integrator. For the IMEX schemes the linear systems are solved with the
biconjugate gradient stabilized iterative method
(implemented in the internal Matlab function \texttt{bicgstab}). Also for this
routine we set the input tolerance as $\tau^{p+1}/100$.

\subsection{Two-dimensional advection-diffusion-reaction}\label{sec:dr2d}
We start by considering the following two-dimensional
advection-diffusion-reaction
equation
  \begin{equation}\label{eq:ad2d}
    \left\{
    \begin{aligned}
      \partial_t u(t,x_1,x_2)&=\nabla\cdot(a(x_1,x_2)\nabla u(t,x_1,x_2))+
      \nabla\cdot\left(\bb b(x_1,x_2)u(t,x_1,x_2)\right)+r(u(t,x_1,x_2)),\\
      u(0,x_1,x_2)&=\exp(-(x_1^2+x_2^2))
    \end{aligned}\right.
  \end{equation}
  in an inhomogeneous and anisotropic medium with periodic boundary conditions.
  Here $(x_1,x_2)\in(-3\pi,3\pi)^2$, $t\in[0,T]$, the diffusion
  tensor is given by
\begin{equation*}
  a(x_1,x_2) =
  \begin{pmatrix}
    a_{11}(x_1,x_2) & 0 \\
    0 & a_{22}(x_1,x_2)
  \end{pmatrix}
  =
  \begin{pmatrix}
    \frac{1}{2}+\frac{1}{6}\sin^2(x_1)\sin^2(x_2) & 0 \\
    0 & \frac{1}{2}+\frac{1}{6}\cos^2(x_1)\cos^2(x_2)
  \end{pmatrix},
\end{equation*}
the velocity field is given by
\begin{equation*}
  b_1(x_1,x_2)=\frac{1}{5}\sin^2(x_1),\quad
  b_2(x_1,x_2)=\frac{1}{5}\sin^2(x_2),
\end{equation*}
  and the reaction is of quadratic type
  $r(u)= \frac{1}{4}u(1-u)$.
  For the proposed approach we then rewrite equation~\eqref{eq:ad2d} as
\begin{equation}\label{eq:ad2dstab}
    \partial_tu(t,x_1,x_2)=A u(t,x_1,x_2)+g(x_1,x_2,u(t,x_1,x_2)),
\end{equation}
see formulas~\eqref{eq:fullstab}.
The structure of the equation allows for an effective discretization
in space using a Fourier pseudospectral approach. In particular, we consider
$N=N_{x_1}=N_{x_2}$ Fourier modes per direction, and we employ the internal
Matlab functions \texttt{fft2} and
\texttt{ifft2} to perform the direct and inverse transforms, respectively, which
are based on the highly efficient FFTW algorithm~\cite{FJ98}. Note that,
once in Fourier space, the application of the relevant matrix functions is
simply performed by pointwise operations on the Fourier coefficients.

In order to determine the free parameter $\lambda$ in the operator $A$, we
follow the approach described in section~\ref{sec:methods}. In fact, we take
$N=2^6$ Fourier modes,  $m=2^8$ time steps and as final simulation time $T=4$.
The results are presented in Figure~\ref{fig:adr2d_diffl}. For comparison,
we plot also the results obtained with $N=2^8$. As we can observe the two plots
are very similar, but the upper ones have been obtained with a negligible
computational time, still guaranteeing a very good guess for the
optimal $\lambda$.
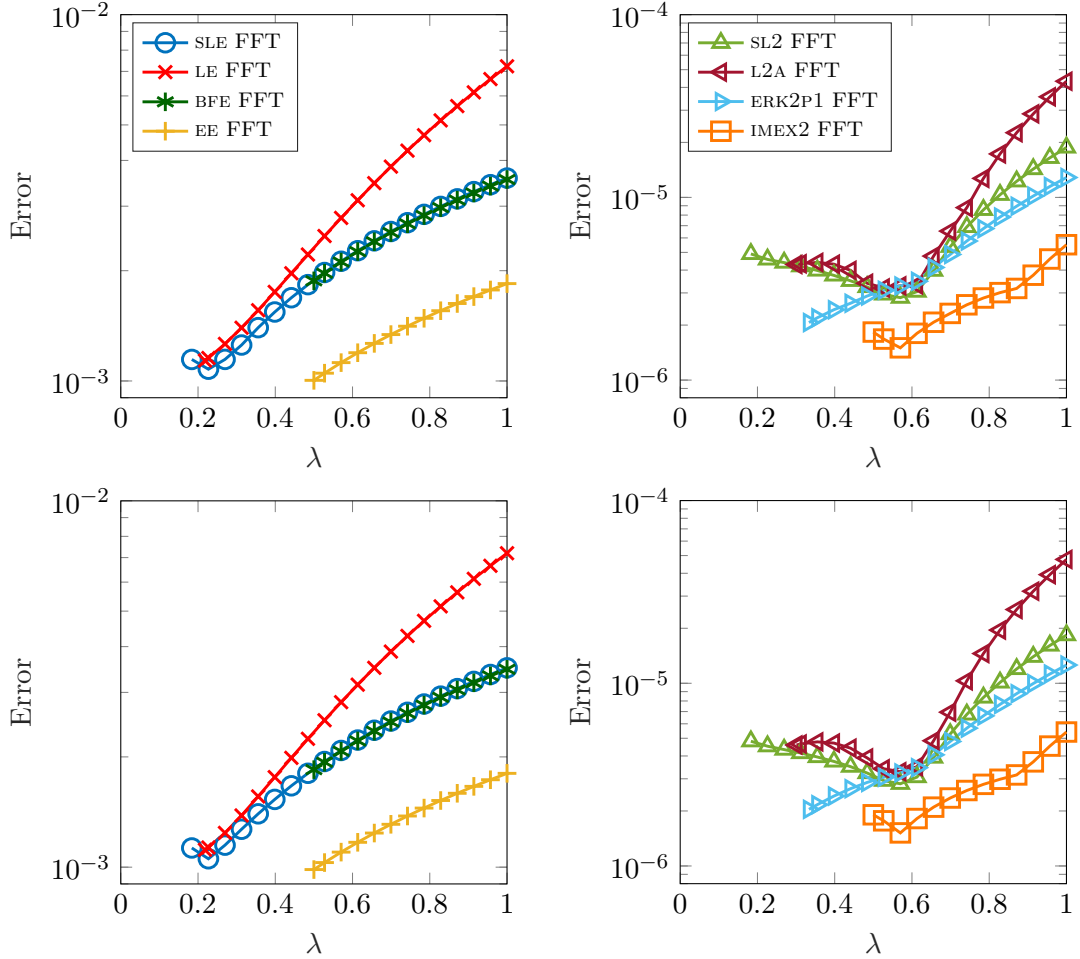
\begin{figure}[htb!]
  \centering
%
%
\definecolor{mycolor1}{rgb}{0.00000,0.44700,0.74100}%
\definecolor{mycolor2}{rgb}{255,0,0}%
\definecolor{mycolor3}{rgb}{0.92900,0.69400,0.12500}%
\definecolor{mycolor9}{RGB}{0,101,0}%
\begin{tikzpicture}

\begin{axis}[%
width=2.0in,
height=2.0in,
at={(0.959in,0.568in)},
scale only axis,
xmin=0,
xmax=1,
xminorticks=true,
xlabel style={font=\color{white!15!black}},
xlabel={$\lambda$},
ymode=log,
ymin=9e-4,
ymax=1e-2,
yminorticks=true,
ylabel style={font=\color{white!15!black}},
ylabel={Error},
axis background/.style={fill=white},
legend style={at={(0.03,0.98)}, anchor=north west,legend cell align=left, font=\scriptsize, align=left, draw=white!15!black}
]
\addplot [color=mycolor1, line width=1.1pt, mark size=3.5pt, mark=o, mark options={solid, mycolor1}]
  table[row sep=crcr]{%
0.183939720585721	0.00114486438980511\\
0.226890261607525	0.00107674912765501\\
0.269840802629329	0.00114516470036293\\
0.312791343651134	0.00125357379525445\\
0.355741884672938	0.00139808089270352\\
0.398692425694742	0.00154271101345854\\
0.441642966716546	0.00168746429196891\\
0.48459350773835	0.00183234086217247\\
0.527544048760154	0.00197734085748588\\
0.570494589781958	0.00212246441079629\\
0.613445130803763	0.0022677116544502\\
0.656395671825567	0.00241308272024512\\
0.699346212847371	0.00255857773941945\\
0.742296753869175	0.00270419684264255\\
0.785247294890979	0.00284994016000517\\
0.828197835912783	0.0029958078210089\\
0.871148376934588	0.00314179995455658\\
0.914098917956392	0.00328791668894168\\
0.957049458978196	0.00343415815183805\\
1	0.00358052447028984\\
};
\addlegendentry{\textsc{sle} FFT}

\addplot [color=mycolor2, line width=1.1pt, mark size=3.5pt, mark=x, mark options={solid, mycolor2}]
  table[row sep=crcr]{%
0.218	0.00113663774013074\\
0.226890261607525	0.00115531275537997\\
0.269840802629329	0.00126239234515102\\
0.312791343651134	0.00139736015458367\\
0.355741884672938	0.00156013967542219\\
0.398692425694742	0.00175063406514963\\
0.441642966716546	0.00196872640655702\\
0.48459350773835	0.00221428003238274\\
0.527544048760154	0.00248713891196803\\
0.570494589781958	0.00278712809690544\\
0.613445130803763	0.00311405422175105\\
0.656395671825567	0.00346770605584345\\
0.699346212847371	0.00384785510145182\\
0.742296753869175	0.00425425623368125\\
0.785247294890979	0.00468664837724583\\
0.828197835912783	0.00514475521483576\\
0.871148376934588	0.00562828592243012\\
0.914098917956392	0.00613693592628579\\
0.957049458978196	0.00667038767683849\\
1	0.00722831143487964\\
};
\addlegendentry{\textsc{le} FFT}

\addplot [color=mycolor9, line width=1.1pt, mark size=3.5pt, mark=asterisk, mark options={solid, mycolor9}]
  table[row sep=crcr]{%
0.5	0.00187650492353341\\
0.527544048760154	0.00196854123871222\\
0.570494589781958	0.00211205389784098\\
0.613445130803763	0.00225556247461166\\
0.656395671825567	0.00239906684660721\\
0.699346212847371	0.00254256689086858\\
0.742296753869175	0.00268606248395073\\
0.785247294890979	0.00282955350192462\\
0.828197835912783	0.00297303982031857\\
0.871148376934588	0.00311652131418391\\
0.914098917956392	0.00325999785809258\\
0.957049458978196	0.00340346932608259\\
1	0.00354693559170931\\
};
\addlegendentry{\textsc{bfe} FFT}

\addplot [color=mycolor3, line width=1.1pt, mark size=3.5pt, mark=+, mark options={solid, mycolor3}]
  table[row sep=crcr]{%
0.5	0.00100392990270124\\
0.527544048760154	0.00105014924706303\\
0.570494589781958	0.00112223738121445\\
0.613445130803763	0.00119434555569133\\
0.656395671825567	0.00126647375809831\\
0.699346212847371	0.00133862197594305\\
0.742296753869175	0.00141079019663645\\
0.785247294890979	0.00148297840749096\\
0.828197835912783	0.0015551865957223\\
0.871148376934588	0.00162741474844742\\
0.914098917956392	0.00169966285268491\\
0.957049458978196	0.00177193089535543\\
1	0.00184421886327937\\
};
\addlegendentry{\textsc{ee} FFT}

\end{axis}
\end{tikzpicture}
%
%
\definecolor{mycolor4}{rgb}{0.49400,0.18400,0.55600}%
\definecolor{mycolor5}{rgb}{0.46600,0.67400,0.18800}%
\definecolor{mycolor6}{rgb}{0.30100,0.74500,0.93300}%
\definecolor{mycolor7}{rgb}{0.63500,0.07800,0.18400}%
\definecolor{mycolor8}{rgb}{255,0,205}%
\definecolor{mycolor10}{RGB}{255,120,0}%
\begin{tikzpicture}

\begin{axis}[%
width=2.0in,
height=2.0in,
at={(0.959in,0.568in)},
scale only axis,
xmin=0,
xmax=1,
xminorticks=true,
xlabel style={font=\color{white!15!black}},
xlabel={$\lambda$},
ymode=log,
ymin=8e-7,
ymax=1e-4,
yminorticks=true,
ylabel style={font=\color{white!15!black}},
ylabel={Error},
axis background/.style={fill=white},
legend style={at={(0.02,0.98)}, anchor=north west,legend cell align=left,
font=\scriptsize, align=left, draw=white!15!black}
]
\addplot [color=mycolor5, line width=1.1pt, mark size=3.5pt, mark=triangle, mark options={solid, mycolor5}]
  table[row sep=crcr]{%
0.183	4.90464201995568e-06\\
0.226	4.5758730815384e-06\\
0.269	4.39754422438348e-06\\
0.312	4.19989333183979e-06\\
0.355	3.98088777131286e-06\\
0.398	3.74072027576217e-06\\
0.441	3.49409448267367e-06\\
0.484	3.23341441850625e-06\\
0.527	2.94345320788361e-06\\
0.57	2.82577128864082e-06\\
0.613	3.0499145682129e-06\\
0.656	3.97205753945719e-06\\
0.699	5.36724562712537e-06\\
0.742	6.89305757205227e-06\\
0.785	8.54901590005244e-06\\
0.828	1.03346493655512e-05\\
0.871	1.22494928081435e-05\\
0.914	1.4293087014754e-05\\
0.957	1.646497858612e-05\\
1	1.87647198112497e-05\\
};
\addlegendentry{\textsc{sl2} FFT}

\addplot [color=mycolor7, line width=1.1pt, mark size=3.5pt, mark=triangle, mark options={solid, rotate=90, mycolor7}]
  table[row sep=crcr]{%
0.301	4.29271764766399e-06\\
0.312	4.33308848474681e-06\\
0.355	4.40729610835527e-06\\
0.398	4.30908123684305e-06\\
0.441	3.98665073222241e-06\\
0.484	3.3887396191289e-06\\
0.527	3.19294358246004e-06\\
0.57	3.28055231234961e-06\\
0.613	3.32098590032413e-06\\
0.656	4.77211547600556e-06\\
0.699	6.53809821587397e-06\\
0.742	8.78957737727705e-06\\
0.785	1.27080398574182e-05\\
0.828	1.72936619651855e-05\\
0.871	2.25931258647271e-05\\
0.914	2.86526054408546e-05\\
0.957	3.55177638109832e-05\\
1	4.32337501668689e-05\\
};
\addlegendentry{\textsc{l2a} FFT}

\addplot [color=mycolor6, line width=1.1pt, mark size=3.5pt, mark=triangle, mark options={solid, rotate=270, mycolor6}]
  table[row sep=crcr]{%
0.333333333333333	2.06542287246143e-06\\
0.355	2.1940252287949e-06\\
0.398	2.42791166677976e-06\\
0.441	2.63346075759849e-06\\
0.484	2.88556176979526e-06\\
0.527	3.10095213498975e-06\\
0.57	3.27472906815074e-06\\
0.613	3.47512722494846e-06\\
0.656	4.13621710469769e-06\\
0.699	4.87243332181897e-06\\
0.742	5.76932533379075e-06\\
0.785	6.74889319305577e-06\\
0.828	7.80902173062024e-06\\
0.871	8.94941991801669e-06\\
0.914	1.01697997364854e-05\\
0.957	1.1469876111525e-05\\
1	1.2849366856088e-05\\
};
\addlegendentry{\textsc{erk2p1} FFT}

\addplot [color=mycolor10, line width=1.1pt, mark size=3.5pt, mark=square, mark options={solid, mycolor10}]
  table[row sep=crcr]{%
0.5	1.8320587235607e-06\\
0.527	1.67682734796389e-06\\
0.57	1.50152850905626e-06\\
0.613	1.80302037189122e-06\\
0.656	2.07311306080361e-06\\
0.699	2.31724414025097e-06\\
0.742	2.58240829302981e-06\\
0.785	2.81363284677798e-06\\
0.828	3.0109092156029e-06\\
0.871	3.17423000729107e-06\\
0.914	3.75131801876945e-06\\
0.957	4.59975408944807e-06\\
1	5.50670410903476e-06\\
};
\addlegendentry{\textsc{imex2} FFT}
\end{axis}

\end{tikzpicture}
%
%
\definecolor{mycolor1}{rgb}{0.00000,0.44700,0.74100}%
\definecolor{mycolor2}{rgb}{255,0,0}%
\definecolor{mycolor3}{rgb}{0.92900,0.69400,0.12500}%
\definecolor{mycolor9}{RGB}{0,101,0}%
\begin{tikzpicture}

\begin{axis}[%
width=2.0in,
height=2.0in,
at={(0.959in,0.568in)},
scale only axis,
xmin=0,
xmax=1,
xminorticks=true,
xlabel style={font=\color{white!15!black}},
xlabel={$\lambda$},
ymode=log,
ymin=9e-4,
ymax=1e-2,
yminorticks=true,
ylabel style={font=\color{white!15!black}},
ylabel={Error},
axis background/.style={fill=white},
]
\addplot [color=mycolor1, line width=1.1pt, mark size=3.5pt, mark=o, mark options={solid, mycolor1}]
  table[row sep=crcr]{%
0.183939720585721	0.00112839318670118\\
0.226890261607525	0.00105411665753365\\
0.269840802629329	0.00114849476103801\\
0.312791343651134	0.00126918396500743\\
0.355741884672938	0.00139835980336247\\
0.398692425694742	0.00153116196423713\\
0.441642966716546	0.00166766586466882\\
0.48459350773835	0.00180441218951746\\
0.527544048760154	0.00194320122901103\\
0.570494589781958	0.00208240073194602\\
0.613445130803763	0.00222342011028595\\
0.656395671825567	0.00236456472768175\\
0.699346212847371	0.00250583473537725\\
0.742296753869175	0.0026472302841781\\
0.785247294890979	0.00278875152444015\\
0.828197835912783	0.00293039860606283\\
0.871148376934588	0.00307217167848141\\
0.914098917956392	0.0032147169595043\\
0.957049458978196	0.0033577026592477\\
1	0.00350081042052462\\
};

\addplot [color=mycolor2, line width=1.1pt, mark size=3.5pt, mark=x, mark options={solid, mycolor2}]
  table[row sep=crcr]{%
0.218	0.00111519422971395\\
0.226890261607525	0.00113156457386746\\
0.269840802629329	0.0012402215653698\\
0.312791343651134	0.00138419696265212\\
0.355741884672938	0.00156000144111313\\
0.398692425694742	0.00176149409046911\\
0.441642966716546	0.00198854616303711\\
0.48459350773835	0.00224101021243033\\
0.527544048760154	0.00251872055226816\\
0.570494589781958	0.00282149377290749\\
0.613445130803763	0.00314912931099792\\
0.656395671825567	0.00350141006683204\\
0.699346212847371	0.00387810306364554\\
0.742296753869175	0.0042789601433881\\
0.785247294890979	0.00470371869335202\\
0.828197835912783	0.00515210239770883\\
0.871148376934588	0.00562637132998349\\
0.914098917956392	0.00612622777917442\\
0.957049458978196	0.00664949860968541\\
1	0.00719585866877569\\
};

\addplot [color=mycolor9, line width=1.1pt, mark size=3.5pt, mark=asterisk, mark options={solid, mycolor9}]
  table[row sep=crcr]{%
0.5	0.00184615334592339\\
0.527544048760154	0.001934301893614\\
0.570494589781958	0.00207209459880053\\
0.613445130803763	0.00221141069953581\\
0.656395671825567	0.00235072769288878\\
0.699346212847371	0.00249004546876016\\
0.742296753869175	0.00262936391652126\\
0.785247294890979	0.00276868292503911\\
0.828197835912783	0.00290800238260274\\
0.871148376934588	0.00304732217698832\\
0.914098917956392	0.00318742277073187\\
0.957049458978196	0.00332770045520846\\
1	0.00346797305898219\\
};

\addplot [color=mycolor3, line width=1.1pt, mark size=3.5pt, mark=+, mark options={solid, mycolor3}]
  table[row sep=crcr]{%
0.5	0.00098508490644841\\
0.527544048760154	0.00102933486127439\\
0.570494589781958	0.00109923912483275\\
0.613445130803763	0.00116922548763291\\
0.656395671825567	0.00123923251968875\\
0.699346212847371	0.00130926021171608\\
0.742296753869175	0.00137937446139251\\
0.785247294890979	0.00144995564357365\\
0.828197835912783	0.0015205563579797\\
0.871148376934588	0.00159117659201857\\
0.914098917956392	0.00166181633300185\\
0.957049458978196	0.0017326957873409\\
1	0.00180374379738495\\
};

\end{axis}
\end{tikzpicture}
%
%
\definecolor{mycolor4}{rgb}{0.49400,0.18400,0.55600}%
\definecolor{mycolor5}{rgb}{0.46600,0.67400,0.18800}%
\definecolor{mycolor6}{rgb}{0.30100,0.74500,0.93300}%
\definecolor{mycolor7}{rgb}{0.63500,0.07800,0.18400}%
\definecolor{mycolor8}{rgb}{255,0,205}%
\definecolor{mycolor10}{RGB}{255,120,0}%
\begin{tikzpicture}

\begin{axis}[%
width=2.0in,
height=2.0in,
at={(0.959in,0.568in)},
scale only axis,
xmin=0,
xmax=1,
xminorticks=true,
xlabel style={font=\color{white!15!black}},
xlabel={$\lambda$},
ymode=log,
ymin=8e-7,
ymax=1e-4,
yminorticks=true,
ylabel style={font=\color{white!15!black}},
ylabel={Error},
axis background/.style={fill=white},
legend style={at={(0.03,0.98)}, anchor=north west,legend cell align=left, font=\footnotesize, align=left, draw=white!15!black}
]
\addplot [color=mycolor5, line width=1.1pt, mark size=3.5pt, mark=triangle, mark options={solid, mycolor5}]
  table[row sep=crcr]{%
0.183	4.81656348160345e-06\\
0.226	4.52454387152881e-06\\
0.269	4.33574295483652e-06\\
0.312	4.15111594046095e-06\\
0.355	3.94902210784038e-06\\
0.398	3.73429577815255e-06\\
0.441	3.49726335326386e-06\\
0.484	3.22677432364227e-06\\
0.527	2.92926658194726e-06\\
0.57	2.81805321597858e-06\\
0.613	3.0663666050553e-06\\
0.656	3.92125695436969e-06\\
0.699	5.27395639777593e-06\\
0.742	6.75368701082896e-06\\
0.785	8.36008587873668e-06\\
0.828	1.01047973484615e-05\\
0.871	1.19769326407462e-05\\
0.914	1.39749494065046e-05\\
0.957	1.60984048629519e-05\\
1	1.83468617429099e-05\\
};

\addplot [color=mycolor7, line width=1.1pt, mark size=3.5pt, mark=triangle, mark options={solid, rotate=90, mycolor7}]
  table[row sep=crcr]{%
0.301	4.59335818868854e-06\\
0.312	4.65136300951682e-06\\
0.355	4.77865397667558e-06\\
0.398	4.7064217437193e-06\\
0.441	4.41418396666878e-06\\
0.484	3.90395704886918e-06\\
0.527	3.40460882210155e-06\\
0.57	3.28109154424903e-06\\
0.613	3.4929777797473e-06\\
0.656	4.84632759843286e-06\\
0.699	6.94548876163771e-06\\
0.742	1.0310677437812e-05\\
0.785	1.45446319511352e-05\\
0.828	1.95621674436522e-05\\
0.871	2.53342200325501e-05\\
0.914	3.19064581639744e-05\\
0.957	3.93238948707502e-05\\
1	4.76308840101909e-05\\
};

\addplot [color=mycolor6, line width=1.1pt, mark size=3.5pt, mark=triangle, mark options={solid, rotate=270, mycolor6}]
  table[row sep=crcr]{%
0.333333333333333	2.04162061371193e-06\\
0.355	2.16637674108794e-06\\
0.398	2.41353294062124e-06\\
0.441	2.64937548935514e-06\\
0.484	2.86583547541403e-06\\
0.527	3.05949889959941e-06\\
0.57	3.23084441200319e-06\\
0.613	3.45163395404974e-06\\
0.656	4.05920790712521e-06\\
0.699	4.80242181438907e-06\\
0.742	5.6787130848679e-06\\
0.785	6.63287094609197e-06\\
0.828	7.66461971236856e-06\\
0.871	8.77368638780147e-06\\
0.914	9.96606199417702e-06\\
0.957	1.12369499877087e-05\\
1	1.25851667577985e-05\\
};

\addplot [color=mycolor10, line width=1.1pt, mark size=3.5pt, mark=square, mark options={solid, mycolor10}]
  table[row sep=crcr]{%
0.5	1.90598154429683e-06\\
0.527	1.7666963187653e-06\\
0.57	1.5172472774463e-06\\
0.613	1.81802898725097e-06\\
0.656	2.09900813409153e-06\\
0.699	2.35472929438783e-06\\
0.742	2.58794436571756e-06\\
0.785	2.79780644277079e-06\\
0.828	2.98291833279462e-06\\
0.871	3.14997068991754e-06\\
0.914	3.71400235458216e-06\\
0.957	4.53120657190448e-06\\
1	5.41036538002148e-06\\
};

\end{axis}

\end{tikzpicture}%
  \caption{Solution of problem~\eqref{eq:ad2d} rewritten as
           equation~\eqref{eq:ad2dstab},
           with different first order schemes (left) and second order schemes
           (right), for varying value $\lambda$.
           The number of time steps
           is fixed to $m=2^8$ for each method, while the number of degrees of
           freedom for each spatial variable is $N=2^6$ (top) and $N=2^8$
           (bottom). }
  \label{fig:adr2d_diffl}
\end{figure}

A summary of the values of $\lambda$ obtained in this way for all the
integrators
under consideration is given in Table~\ref{tab:adr2d_lambdaopt}. Remark that,
in this example, it is not always the case that the lower bounds stemming from
the linear stability analysis correspond to the lowest errors, both for the
exponential and for the IMEX schemes (cf. Table~\ref{tab:stab}).
\begin{table}[htb!]
  \centering
  \begin{tabular}{c||cccc|cccc}
    method & \textsc{bfe} & \textsc{ee} & \textsc{le} & \textsc{sle} &
    \textsc{erk2p1} & \textsc{imex2} & \textsc{l2a} & \textsc{sl2}\\
\hline
  \rule{0pt}{12pt} value of $\lambda$ & 0.50 & 0.50 & 0.22 & 0.23 & 0.33 & 0.57 & 0.53 & 0.57
  \end{tabular}
  \caption{Values of $\lambda$,
    for the different schemes that are employed to integrate
    problem~\eqref{eq:ad2d} rewritten as equation~\eqref{eq:ad2dstab}
    (see also Figure~\ref{fig:adr2d_diffl}).}
  \label{tab:adr2d_lambdaopt}
\end{table}

Now, employing for each integrator the corresponding value of $\lambda$ found,
we present the actual performance results using $N=2^8$ degrees of freedom for
every spatial variable.
As a comparison, we consider the original equation~\eqref{eq:ad2d} {\color{black}
semidiscretized in space with spectral finite differences} and
solved numerically with the same temporal integrators. {\color{black} In
addition, we also perform the time evolution with the exponential
Rosenbrock--Euler 
scheme (see the appendix), that we label \textsc{erbe}.
This method is of second order, requires the action of a single $\varphi_1$ 
function and employs as linear operator the Jacobian of the right-hand side of the
equation (i.e., it exploits in the linear part also information coming from the 
reaction term, which may be beneficial)}.

The number of time steps is set to $2^{\ell_1}$, with
$\ell_1=9,\ldots,12$, for the first order integrators, while $2^{\ell_2}$, with
$\ell_2=8,\ldots,11$, for the second order methods. The final simulation time
is again $T=4$.
The results are collected in the CPU diagrams of Figure~\ref{fig:cpudiag2d}.
\begin{figure}[htb!]
  \centering
%
%
\definecolor{mycolor1}{rgb}{0.00000,0.44700,0.74100}%
\definecolor{mycolor2}{rgb}{255,0,0}%
\definecolor{mycolor3}{rgb}{0.92900,0.69400,0.12500}%
\definecolor{mycolor4}{rgb}{0.49400,0.18400,0.55600}%
\definecolor{mycolor5}{rgb}{0.46600,0.67400,0.18800}%
\definecolor{mycolor6}{rgb}{0.30100,0.74500,0.93300}%
\definecolor{mycolor7}{rgb}{0.63500,0.07800,0.18400}%
\definecolor{mycolor8}{rgb}{255,0,205}%
\definecolor{mycolor9}{RGB}{0,101,0}%
\definecolor{mycolor10}{RGB}{255,120,0}%
\definecolor{mycolor11}{RGB}{192,192,192}%
\begin{tikzpicture}

\begin{axis}[%
width=2.0in,
height=2.5in,
at={(0.959in,0.568in)},
scale only axis,
xmode=log,
xmin=1,
xmax=150,
xminorticks=true,
xlabel style={font=\color{white!15!black}},
xlabel={Wall-clock time},
ymode=log,
ymin=7e-6,
ymax=2e-3,
yminorticks=true,
ylabel style={font=\color{white!15!black}},
ylabel={Error},
axis background/.style={fill=white},
title style={font=\bfseries},
legend style={at={(0.02,0.02)},anchor=south west,legend cell align=left, font=\scriptsize, align=left, draw=white!15!black, legend columns=2},
]
\addplot [color=mycolor1, dashed, line width=1.1pt, mark size=3.5pt, mark=o, mark options={solid, mycolor1}]
  table[row sep=crcr]{%
12.098047	0.00110086168057716\\
25.139657	0.000549976305138252\\
50.920886	0.000274876443990114\\
101.886563	0.000137412305099492\\
};
\addlegendentry{\textsc{sle}}

\addplot [color=mycolor2, dashed, line width=1.1pt, mark size=3.5pt, mark=x, mark options={solid, mycolor2}]
  table[row sep=crcr]{%
11.853121	0.0007780910152167\\
23.909601	0.000388943182620642\\
48.260165	0.000194444639994115\\
96.214306	9.72142365224352e-05\\
};
\addlegendentry{\textsc{le}}

\addplot [color=mycolor3, dashed, line width=1.1pt, mark size=3.5pt, mark=+, mark options={solid, mycolor3}]
  table[row sep=crcr]{%
12.588906	0.000580006741533052\\
25.395485	0.000289982334491277\\
50.832936	0.000144987862443679\\
102.500733	7.24950787114666e-05\\
};
\addlegendentry{\textsc{ee}}

\addplot [color=mycolor9, dashed, line width=1.1pt, mark size=3.5pt, mark=asterisk, mark options={solid, mycolor9}]
  table[row sep=crcr]{%
6.744604	0.00111663886099136\\
12.459467	0.000550244480534153\\
20.675822	0.000277872362249588\\
34.044228	0.000137968450409394\\
};
\addlegendentry{\textsc{bfe}}

\addplot [color=mycolor1, line width=1.1pt, mark size=3.5pt, mark=o, mark options={solid, mycolor1}]
  table[row sep=crcr]{%
1.839315	0.000526370130694756\\
3.438333	0.00026308016012716\\
6.808527	0.000131515973432218\\
13.624456	6.57539831737838e-05\\
};
\addlegendentry{\textsc{sle} FFT}

\addplot [color=mycolor2, line width=1.1pt, mark size=3.5pt, mark=x, mark options={solid, mycolor2}]
  table[row sep=crcr]{%
1.843468	0.000556992871400477\\
3.532813	0.000278345630522203\\
6.94337500000001	0.000139134015089943\\
13.884556	6.95560412712643e-05\\
};
\addlegendentry{\textsc{le} FFT}

\addplot [color=mycolor3, line width=1.1pt, mark size=3.5pt, mark=+, mark options={solid, mycolor3}]
  table[row sep=crcr]{%
1.84071	0.000492390890695822\\
3.452812	0.000246160186987895\\
6.874808	0.000123073452257836\\
13.717143	6.15371819302634e-05\\
};
\addlegendentry{\textsc{ee} FFT}

\addplot [color=mycolor9, line width=1.1pt, mark size=3.5pt, mark=asterisk, mark options={solid, mycolor9}]
  table[row sep=crcr]{%
1.787308	0.000923782637668484\\
3.199085	0.000462069648677115\\
6.109236	0.000231081480774508\\
12.843855	0.000115554507864472\\
};
\addlegendentry{\textsc{bfe} FFT}

\end{axis}

\end{tikzpicture}%
%
%
\definecolor{mycolor1}{rgb}{0.00000,0.44700,0.74100}%
\definecolor{mycolor2}{rgb}{255,0,0}%
\definecolor{mycolor3}{rgb}{0.92900,0.69400,0.12500}%
\definecolor{mycolor4}{rgb}{0.49400,0.18400,0.55600}%
\definecolor{mycolor5}{rgb}{0.46600,0.67400,0.18800}%
\definecolor{mycolor6}{rgb}{0.30100,0.74500,0.93300}%
\definecolor{mycolor7}{rgb}{0.63500,0.07800,0.18400}%
\definecolor{mycolor8}{rgb}{255,0,205}%
\definecolor{mycolor9}{RGB}{0,101,0}%
\definecolor{mycolor10}{RGB}{255,120,0}%
\begin{tikzpicture}

\begin{axis}[%
width=2.0in,
height=2.5in,
at={(0.959in,0.568in)},
scale only axis,
xmode=log,
xmin=1,
xmax=200,
xminorticks=true,
xlabel style={font=\color{white!15!black}},
xlabel={Wall-clock time},
ymode=log,
ymin=1e-10,
ymax=1e-5,
yminorticks=true,
ylabel style={font=\color{white!15!black}},
ylabel={Error},
axis background/.style={fill=white},
title style={font=\bfseries},
legend style={at={(0.02,0.02)}, anchor=south west, legend cell align=left, font=\scriptsize, align=left, draw=white!15!black, legend columns=2},
]

\addplot [color=mycolor5, dashed, line width=1.1pt, mark size=3.5pt, mark=triangle, mark options={solid, mycolor5}]
  table[row sep=crcr]{%
13.086028	3.63508110156608e-06\\
25.802046	9.09812411399881e-07\\
54.695422	2.29425647535178e-07\\
108.223447	5.94718474565569e-08\\
};
\addlegendentry{\textsc{sl2}}

\addplot [color=mycolor6, dashed, line width=1.1pt, mark size=3.5pt, mark=triangle, mark options={solid, rotate=270, mycolor6}]
  table[row sep=crcr]{%
12.426066	4.90287505299534e-06\\
25.163217	1.22960593005793e-06\\
52.7485770000001	3.12616136595557e-07\\
105.999093	8.33365496316061e-08\\
};
\addlegendentry{\textsc{erk2p1}}

\addplot [color=mycolor7, dashed, line width=1.1pt, mark size=3.5pt, mark=triangle, mark options={solid, rotate=90, mycolor7}]
  table[row sep=crcr]{%
17.901806	1.02634705493567e-06\\
35.861589	2.60042677132835e-07\\
75.99898	6.77391730775318e-08\\
152.264781	1.96648604766944e-08\\
};
\addlegendentry{\textsc{l2a}}

\addplot [color=teal, dashed, line width=1.1pt, mark size=3.5pt, mark=o, mark options={solid, teal}]
  table[row sep=crcr]{%
6.790854	3.92849203169891e-06\\
13.796782	9.82035337450601e-07\\
27.769881	2.47579765820689e-07\\
54.764825	6.45697995289635e-08\\
};
\addlegendentry{\textsc{erbe}}

\addplot [color=mycolor10, dashed, line width=1.1pt, mark size=3.5pt, mark=square, mark options={solid, mycolor10}]
  table[row sep=crcr]{%
9.381463	3.45691899912649e-06\\
16.414386	7.41084501035014e-07\\
26.026193	1.8256742904577e-07\\
40.267477	4.26714632690844e-08\\
};
\addlegendentry{\textsc{imex2}}

\addplot [color=mycolor5, line width=1.1pt, mark size=3.5pt, mark=triangle, mark options={solid, mycolor5}]
  table[row sep=crcr]{%
1.912917	2.81805321538333e-06\\
3.677364	7.05801714811072e-07\\
7.25856	1.78884518051298e-07\\
14.18976	4.73102521671667e-08\\
};
\addlegendentry{\textsc{sl2} FFT}

\addplot [color=mycolor6, line width=1.1pt, mark size=3.5pt, mark=triangle, mark options={solid, rotate=270, mycolor6}]
  table[row sep=crcr]{%
1.765997	2.0226978162428e-06\\
3.513002	4.95089019900098e-07\\
7.03526000000001	1.20353812951466e-07\\
13.426829	2.77984428279036e-08\\
};
\addlegendentry{\textsc{erk2p1} FFT}

\addplot [color=mycolor7, line width=1.1pt, mark size=3.5pt, mark=triangle, mark options={solid, rotate=90, mycolor7}]
  table[row sep=crcr]{%
1.837949	3.38310862834931e-06\\
3.570315	8.49222977290802e-07\\
6.589088	2.15171994950246e-07\\
13.299486	5.63603195337909e-08\\
};
\addlegendentry{\textsc{l2a} FFT}

\addplot [color=mycolor10, line width=1.1pt, mark size=3.5pt, mark=square, mark options={solid, mycolor10}]
  table[row sep=crcr]{%
1.740329	1.51724727792249e-06\\
3.42626	3.75302161868081e-07\\
6.685533	9.08802211725367e-08\\
13.270069	1.99071882398876e-08\\
};
\addlegendentry{\textsc{imex2} FFT}

\end{axis}

\end{tikzpicture}%
  \caption{Results for the simulations of problem~\eqref{eq:ad2d}
  (dashed lines), rewritten as equation~\eqref{eq:ad2dstab}
  (solid lines), with different integrators of first order (left, number of time
  steps $m=2^{\ell_1}$ with $\ell_1=9,\ldots,12$) and of second order (right,
  number of time steps $m=2^{\ell_2}$ with $\ell_2=8,\ldots,11$).}
  \label{fig:cpudiag2d}
\end{figure}
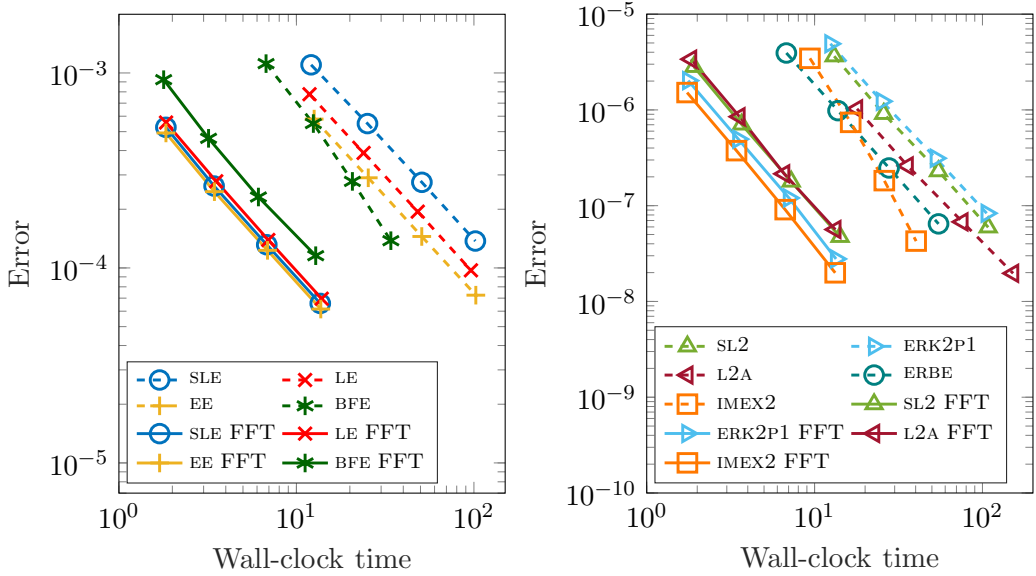
As we can see, overall the proposed approach allows to obtain 
slightly better errors with respect to the results in the original formulation,
{\color{black}with the exception of the Lawson2a scheme.
Indeed, the choice of the parameter $\lambda$ in the 
acceleration technique is determined by stability considerations
only, and not by 
accuracy ones. Hence, a decrease in error is not to be expected, in general}.
The computational
time of the proposed approach is less than the corresponding integration in the
original formulation. Among the first order schemes the exponential
 methods perform equally well, while for
 the second order methods the implicit-explicit2 scheme is the one that
 shows slightly better results. Note also that, when integrating
 equation~\eqref{eq:ad2dstab}
with the proposed approach, the wall-clock time is basically independent of the
chosen time integrators, both for first and second order methods.
This is expected, as the most costly operations in this case are the Fourier
and inverse Fourier transforms. {\color{black}In fact each scheme of order one 
requires 7 transforms per time step, while we need 13 for the second order
methods.
Finally, note that as expected the \textsc{erbe} method requires
less wall-clock time than the \textsc{l2a} and the \textsc{erk2p1} scheme (since
it requires just a single action of $\varphi$ function).
However, overall the computational gain of the proposed approach is still superior.}

\subsection{Three-dimensional advection-diffusion-reaction}\label{sec:adr3d}
We now consider the following three-dimensional
advection-diffusion-reaction equation
  \begin{equation}\label{eq:adr3d}
    \left\{
    \begin{aligned}
      \partial_t u(t,x_1,x_2,x_3)&=a(x_1,x_2,x_3)\Delta u(t,x_1,x_2,x_3)+
      b \sum_{\mu=1}^3 \partial_{x_\mu} u(t,x_1,x_2,x_3) +r(u(t,x_1,x_2,x_3)),\\
      u(0,x_1,x_2,x_3)&=\left(\frac{27}{4}\right)^3x_1x_2x_3(1-x_1)^2(1-x_2)^2(1-x_3)^2,
    \end{aligned}\right.
  \end{equation}
  in the spatial domain $\Omega = (0,1)^3$ and $t\in[0,T]$.
  We equip the problem with homogeneous Dirichlet boundary conditions on the
  set $\{(x_1,x_2,x_3)\in\partial\Omega: x_1x_2x_3=0\}$ and with homogeneous
  Neumann
  boundary conditions elsewhere.
  We set the diffusion coefficient to
  \begin{equation*}
    a(x_1,x_2,x_3) = \frac{1}{10}\ee^{-(x_1-1/2)^2-(x_2-1/2)^2-(x_3-1/2)^2},
  \end{equation*}
  while the reaction is of cubic type $r(u)=u(1+u^2)$.

  For the proposed approach we rewrite equation~\eqref{eq:adr3d} as
\begin{equation}\label{eq:adr3dstab}
\begin{split}
  \partial_t u(t,x_1,x_2,x_3)&=
  \underbrace{(\lambda a^{\mathrm{max}}\Delta +
    b(\partial_{x_1} +\partial_{x_2}+\partial_{x_3}))
    u(t,x_1,x_2,x_3)}_{A u(t,x_1,x_2,x_3)}\\
  &+ \underbrace{(a(x_1,x_2,x_3)-\lambda a^{\mathrm{max}})\Delta u(t,x_1,x_2,x_3) +
    r(u(t,x_1,x_2,x_3))}_{g(x_1,x_2,x_3,u(t,x_1,x_2,x_3))},
\end{split}
\end{equation}
{\color{black}
where
\begin{equation*}
  a^{\mathrm{max}}=\max_{x_1, x_2, x_3}a(x_1,x_2,x_3).
\end{equation*}
}%
We discretize this equation in space  with standard second order centered finite
differences, using $N=N_{x_1}=N_{x_2}=N_{x_3}$ discretization points for each
direction.
By doing so, the linear operator $A$ in equation~\eqref{eq:adr3dstab}
is approximated by a matrix with Kronecker sum structure
$A_3 \oplus A_2 \oplus A_1$,
where $A_\mu\in\RR^{N_{x_\mu} \times N_{x_\mu}}$ is the discretization matrix
of the operator $\lambda a^{\mathrm{max}}\partial_{x_\mu x_\mu}
+ b \partial_{x_\mu}$.
Hence, for those exponential integrators that just require the exponential
function (i.e., the ones of Lawson type), it is possible to employ $\mu$-mode
based techniques in order to efficiently compute the needed actions of the
matrix exponential via Tucker operators 
{\color{black}
exploiting the equivalence
\begin{equation*}
  \ee^{A_3 \oplus A_2 \oplus A_1}\bb v =
  \mathrm{vec}\left(\bb V \times_1 \ee^{A_1} \times_2 \ee^{A_2} \times_3 \ee^{A_3}\right),
  \quad \bb V \in \RR^{N\times N \times N},
  \quad \bb v = \mathrm{vec}(\bb V),
\end{equation*}
see references~\cite{CCEOZ22,CCZ22b} for more details}.

Similarly to the previous example, in order to determine the free parameter
$\lambda$ in $A$, we use the technique described in section~\ref{sec:methods}.
We set $N=2^4$ and perform a simulation
with $m=2^8$ time steps and final simulation time $T=1/4$. The outcome,
for two different choices
of advection parameter ($b=-0.01$ and $b=-1$), is summarized in
Figure~\ref{fig:adr3d_diffl}, and the resulting values of $\lambda$ actually
employed in the experiments are given in
Table~\ref{tab:adr3d_lambdaopt}.
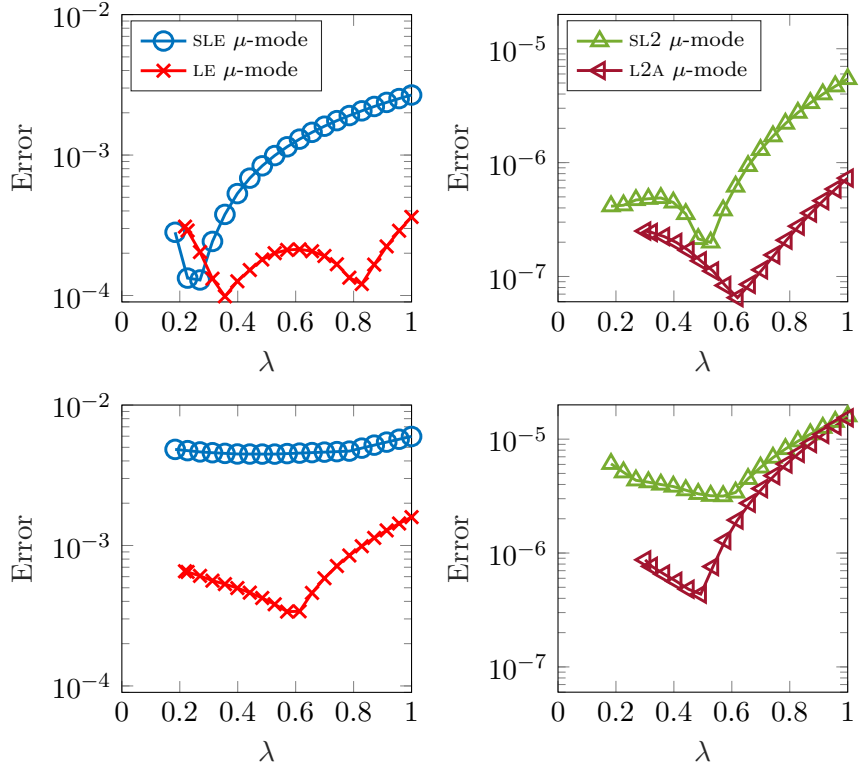
\begin{figure}[htb!]
  \centering
%
%
\definecolor{mycolor1}{rgb}{0.00000,0.44700,0.74100}%
\definecolor{mycolor2}{rgb}{255,0,0}%
\begin{tikzpicture}

\begin{axis}[%
width=1.5in,
height=1.5in,
at={(0.959in,0.568in)},
scale only axis,
xmin=0,
xmax=1,
xminorticks=true,
xlabel style={font=\color{white!15!black}},
xlabel={$\lambda$},
ymode=log,
ymin=9e-5,
ymax=1e-2,
yminorticks=true,
ylabel style={font=\color{white!15!black}},
ylabel={Error},
axis background/.style={fill=white},
legend style={at={(0.03,0.98)}, anchor=north west,legend cell align=left, font=\scriptsize, align=left, draw=white!15!black}
]
\addplot [color=mycolor1, line width=1.1pt, mark size=3.5pt, mark=o, mark options={solid, mycolor1}]
  table[row sep=crcr]{%
0.183939720585721	0.000281897647205952\\
0.226890261607525	0.000133002896836569\\
0.269840802629329	0.000128989375155873\\
0.312791343651134	0.000242653580766222\\
0.355741884672938	0.000378017622800176\\
0.398692425694742	0.000531454240670642\\
0.441642966716546	0.000684871491575686\\
0.48459350773835	0.000838269307485736\\
0.527544048760154	0.000991647620428483\\
0.570494589781958	0.00114500636249268\\
0.613445130803763	0.00129834546582794\\
0.656395671825567	0.00145166486264533\\
0.699346212847371	0.00160496448522356\\
0.742296753869175	0.00175824426590579\\
0.785247294890979	0.00191150413710284\\
0.828197835912783	0.00206474403129615\\
0.871148376934588	0.0022179638810378\\
0.914098917956392	0.0023711636189529\\
0.957049458978196	0.00252434317774059\\
1	0.00267750249017803\\
};
\addlegendentry{\textsc{sle} $\mu$-mode}

\addplot [color=mycolor2, line width=1.1pt, mark size=3.5pt, mark=x, mark options={solid, mycolor2}]
  table[row sep=crcr]{%
0.218	0.000309314170510721\\
0.226890261607525	0.00029016570275568\\
0.269840802629329	0.000203978432914627\\
0.312791343651133	0.000131746481806065\\
0.355741884672937	9.86060796118005e-05\\
0.398692425694742	0.000126258049467122\\
0.441642966716546	0.000151864843011902\\
0.48459350773835	0.000180430452670029\\
0.527544048760154	0.000200184656430159\\
0.570494589781959	0.000211132149076035\\
0.613445130803762	0.000213277789102244\\
0.656395671825566	0.000206626598380186\\
0.699346212847371	0.000191183760991773\\
0.742296753869175	0.000166954622140144\\
0.785247294890979	0.000133944688017173\\
0.828197835912783	0.000120776574307084\\
0.871148376934588	0.000165910170603296\\
0.914098917956392	0.000223677190716389\\
0.957049458978197	0.000289213824174373\\
1	0.000362504615692125\\
};
\addlegendentry{\textsc{le} $\mu$-mode}

\end{axis}
\end{tikzpicture}%
%
%
\definecolor{mycolor5}{rgb}{0.46600,0.67400,0.18800}%
\definecolor{mycolor7}{rgb}{0.63500,0.07800,0.18400}%
\definecolor{mycolor8}{rgb}{255,0,205}%
\begin{tikzpicture}

\begin{axis}[%
width=1.5in,
height=1.5in,
at={(0.959in,0.568in)},
scale only axis,
xmin=0,
xmax=1,
xminorticks=true,
xlabel style={font=\color{white!15!black}},
xlabel={$\lambda$},
ymode=log,
ymin=6e-8,
ymax=2e-5,
yminorticks=true,
ylabel style={font=\color{white!15!black}},
ylabel={Error},
axis background/.style={fill=white},
legend style={at={(0.03,0.98)}, anchor=north west,legend cell align=left, font=\scriptsize, align=left, draw=white!15!black}
]
\addplot [color=mycolor5, line width=1.1pt, mark size=3.5pt, mark=triangle, mark options={solid, mycolor5}]
  table[row sep=crcr]{%
0.183	4.14275960491764e-07\\
0.226	4.22279946637184e-07\\
0.269	4.69614378399604e-07\\
0.312	4.81457970515478e-07\\
0.355	4.89440027142871e-07\\
0.398	4.46936337306608e-07\\
0.441	3.53989843261546e-07\\
0.484	2.13711439724768e-07\\
0.527	1.99318575209476e-07\\
0.57	3.81525987944329e-07\\
0.613	6.16524544469315e-07\\
0.656	9.35044644427919e-07\\
0.699	1.29409866202292e-06\\
0.742	1.70543398121977e-06\\
0.785	2.20038097364852e-06\\
0.828	2.74538577915074e-06\\
0.871	3.34040578660628e-06\\
0.914	3.98539841622257e-06\\
0.957	4.6803211210312e-06\\
1	5.42513138688791e-06\\
};
\addlegendentry{\textsc{sl2} $\mu$-mode}

\addplot [color=mycolor7, line width=1.1pt, mark size=3.5pt, mark=triangle, mark options={solid, rotate=90, mycolor7}]
  table[row sep=crcr]{%
0.301	2.50601176038894e-07\\
0.323789473684211	2.43391326540239e-07\\
0.366052631578948	2.24347819964541e-07\\
0.408315789473684	1.99258425623005e-07\\
0.450578947368421	1.6968757950029e-07\\
0.492842105263158	1.37300610205054e-07\\
0.535105263157894	1.11721492677593e-07\\
0.577368421052631	8.33757739259383e-08\\
0.619631578947369	6.51782856916505e-08\\
0.661894736842105	8.5254407421719e-08\\
0.704157894736841	1.14374482534955e-07\\
0.746421052631579	1.54410934317242e-07\\
0.788684210526315	2.0836528782819e-07\\
0.830947368421052	2.76443224773086e-07\\
0.87321052631579	3.60683988819184e-07\\
0.915473684210525	4.6312192749934e-07\\
0.957736842105263	5.85786432026113e-07\\
1	7.30701844830458e-07\\
};
\addlegendentry{\textsc{l2a} $\mu$-mode}


\end{axis}
\end{tikzpicture}
%
%
\definecolor{mycolor1}{rgb}{0.00000,0.44700,0.74100}%
\definecolor{mycolor2}{rgb}{255,0,0}%
\begin{tikzpicture}

\begin{axis}[%
width=1.5in,
height=1.5in,
at={(0.959in,0.568in)},
scale only axis,
xmin=0,
xmax=1,
xminorticks=true,
xlabel style={font=\color{white!15!black}},
xlabel={$\lambda$},
ymode=log,
ymin=9e-5,
ymax=1e-2,
yminorticks=true,
ylabel style={font=\color{white!15!black}},
ylabel={Error},
axis background/.style={fill=white},
legend style={at={(0.03,0.98)}, anchor=north west,legend cell align=left, font=\scriptsize, align=left, draw=white!15!black}
]
\addplot [color=mycolor1, line width=1.1pt, mark size=3.5pt, mark=o, mark options={solid, mycolor1}]
  table[row sep=crcr]{%
0.183939720585721	0.00483298488501304\\
0.226890261607525	0.00473509745568383\\
0.269840802629329	0.00463741913279213\\
0.312791343651134	0.00457692636330783\\
0.355741884672938	0.00452796362523569\\
0.398692425694742	0.004483658808636\\
0.441642966716546	0.00447938609218973\\
0.48459350773835	0.00447526002308322\\
0.527544048760154	0.00447446486702133\\
0.570494589781958	0.00451110862192536\\
0.613445130803763	0.00454785878349634\\
0.656395671825567	0.00458471498586015\\
0.699346212847371	0.00462167686418354\\
0.742296753869175	0.00465874405467301\\
0.785247294890979	0.00469934838555418\\
0.828197835912783	0.00493347522981767\\
0.871148376934588	0.00518245719661039\\
0.914098917956392	0.0054456190864959\\
0.957049458978196	0.00570894209808483\\
1	0.00597242593687977\\
};

\addplot [color=mycolor2, line width=1.1pt, mark size=3.5pt, mark=x, mark options={solid, mycolor2}]
  table[row sep=crcr]{%
0.218	0.000655604746165376\\
0.226890261607525	0.000648033788685393\\
0.269840802629329	0.000607614176838867\\
0.312791343651134	0.000561788884084264\\
0.355741884672938	0.00053107956647493\\
0.398692425694742	0.000497686001001886\\
0.441642966716546	0.000461611262024785\\
0.48459350773835	0.000422861686683215\\
0.527544048760154	0.000381508780000129\\
0.570494589781958	0.000338161612732082\\
0.613445130803763	0.000339892714680178\\
0.656395671825567	0.000459689428070053\\
0.699346212847371	0.000584513726900978\\
0.742296753869175	0.000714286266398498\\
0.785247294890979	0.000848923913607934\\
0.828197835912783	0.000988339974371481\\
0.871148376934588	0.00113244442558605\\
0.914098917956392	0.00128114415210834\\
0.957049458978196	0.00143434318785452\\
1	0.00159194296027945\\
};

\end{axis}
\end{tikzpicture}%
%
%
\definecolor{mycolor5}{rgb}{0.46600,0.67400,0.18800}%
\definecolor{mycolor7}{rgb}{0.63500,0.07800,0.18400}%
\definecolor{mycolor8}{rgb}{255,0,205}%
\begin{tikzpicture}

\begin{axis}[%
width=1.5in,
height=1.5in,
at={(0.959in,0.568in)},
scale only axis,
xmin=0,
xmax=1,
xminorticks=true,
xlabel style={font=\color{white!15!black}},
xlabel={$\lambda$},
ymode=log,
ymin=6e-8,
ymax=2e-5,
yminorticks=true,
ylabel style={font=\color{white!15!black}},
ylabel={Error},
axis background/.style={fill=white},
legend style={at={(0.03,0.98)}, anchor=north west,legend cell align=left, font=\footnotesize, align=left, draw=white!15!black}
]
\addplot [color=mycolor5, line width=1.1pt, mark size=3.5pt, mark=triangle, mark options={solid, mycolor5}]
  table[row sep=crcr]{%
0.183	6.07286571118913e-06\\
0.226	5.11888255211927e-06\\
0.269	4.35039853935415e-06\\
0.312	4.19163587764883e-06\\
0.355	3.99581497470838e-06\\
0.398	3.83633368120207e-06\\
0.441	3.56691849353103e-06\\
0.484	3.3093847713566e-06\\
0.527	3.18394835541132e-06\\
0.57	3.16196728821978e-06\\
0.613	3.40277077110451e-06\\
0.656	4.48171187077637e-06\\
0.699	5.64375130705773e-06\\
0.742	6.88239902336294e-06\\
0.785	8.19724969099228e-06\\
0.828	9.58790395542801e-06\\
0.871	1.10539683145599e-05\\
0.914	1.25950549982428e-05\\
0.957	1.42107818518016e-05\\
1	1.590077222225e-05\\
};

\addplot [color=mycolor7, line width=1.1pt, mark size=3.5pt, mark=triangle, mark options={solid, rotate=90, mycolor7}]
  table[row sep=crcr]{%
0.301	8.71479117870155e-07\\
0.323789473684211	7.64069072935429e-07\\
0.366052631578948	6.53456520209592e-07\\
0.408315789473684	5.64771672006623e-07\\
0.450578947368421	4.80928150253386e-07\\
0.492842105263158	4.35790914346495e-07\\
0.535105263157894	7.60796017706776e-07\\
0.577368421052631	1.29182591813901e-06\\
0.619631578947369	1.94728295841611e-06\\
0.661894736842105	2.73912253444158e-06\\
0.704157894736841	3.67917803062021e-06\\
0.746421052631579	4.77916005245137e-06\\
0.788684210526315	6.05065572869368e-06\\
0.830947368421052	7.50512776262864e-06\\
0.87321052631579	9.15391355440067e-06\\
0.915473684210525	1.10082240614528e-05\\
0.957736842105263	1.30791426706566e-05\\
1	1.53776239653938e-05\\
};


\end{axis}
\end{tikzpicture}%
  \caption{Solution of problem~\eqref{eq:adr3d} rewritten as
           equation~\eqref{eq:adr3dstab},
           with various first order schemes (left) and second order schemes
           (right), for different values of $\lambda$. The number of time steps
           is fixed to $m=2^8$ for each method, while the number of degrees of
           freedom for each spatial variable is $N=2^4$. The advection
           parameter is either $b=-0.01$ (top) or  $b=-1$ (bottom).}
  \label{fig:adr3d_diffl}
\end{figure}

\begin{table}[htb!]
  \centering
  \begin{tabular}{c||cc|cc|cc|cc}
    & \multicolumn{4}{c|}{$b=-0.01$} & \multicolumn{4}{c}{$b=-1$}\\
    & \textsc{le} & \textsc{sle} & \textsc{l2a} & \textsc{sl2}&
    \textsc{le} & \textsc{sle} & \textsc{l2a} & \textsc{sl2}\\
    \hline
    \rule{0pt}{12pt} value of $\lambda$    &  0.36 & 0.27 & 0.62 & 0.53 &
 0.60 & 0.50 & 0.50 & 0.57
  \end{tabular}
  \caption{Values of $\lambda$ for the different schemes
  that are employed to integrate problem~\eqref{eq:adr3d} rewritten as
  equation~\eqref{eq:adr3dstab} (see also Figure~\ref{fig:adr3d_diffl}).}
  \label{tab:adr3d_lambdaopt}
\end{table}

For the performance results, we set the number of uni-directional degrees of
freedom to $N=60$. As term of comparison, we consider here the same time
integrators but applied to the original equation~\eqref{eq:adr3d}
spatially discretized with second order centered finite differences.
In addition, in this formulation, we also present the results obtained
with the exponential Euler scheme, with {\color{black}the second order 
  exponential Runge--Kutta integrator \textsc{erk2p1}, with the exponential
  Rosenbrock--Euler method \textsc{erbe} and with the two IMEX 
schemes \textsc{bfe} and \textsc{imex2}}.
We perform the simulations with a number of time integration steps equal
to $2^{\ell_1}$, with $\ell_1=9,\ldots,12$, for the first order methods, while
we consider $2^{\ell_2}$, with $\ell_2=5,\ldots,8$, for the second order
schemes.
Again, the final simulation time is set to $T=1/4$.
The outcome of the experiments with advection parameter $b=-0.01$
is collected in a CPU diagram in Figure~\ref{fig:cpudiagadr3d}.
\begin{figure}[htb!]
  \centering
%
%
\definecolor{mycolor1}{rgb}{0.00000,0.44700,0.74100}%
\definecolor{mycolor2}{rgb}{255,0,0}%
\definecolor{mycolor3}{rgb}{0.92900,0.69400,0.12500}%
\definecolor{mycolor9}{RGB}{0,101,0}%

\begin{tikzpicture}

\begin{axis}[%
width=2.2in,
height=2.2in,
at={(0.959in,0.568in)},
scale only axis,
xmode=log,
xmin=0.05,
xmax=300,
xminorticks=true,
xlabel style={font=\color{white!15!black}},
xlabel={Wall-clock time},
ymode=log,
ymin=5e-6,
ymax=2e-3,
yminorticks=true,
ylabel style={font=\color{white!15!black}},
ylabel={Error},
axis background/.style={fill=white},
title style={font=\bfseries},
legend style={at={(0.02,0.98)},legend cell align=left, anchor=north west, font=\scriptsize, align=left, draw=white!15!black}
]
\addplot [color=mycolor1, dashed, line width=1.1pt, mark size=3.5pt, mark=o, mark options={solid, mycolor1}]
  table[row sep=crcr]{%
25.680157	0.00107882594647254\\
51.620715	0.000539212293060429\\
103.908177	0.000269555882383992\\
209.419618	0.000134765483488522\\
};
\addlegendentry{\textsc{sle}}

\addplot [color=mycolor2, dashed, line width=1.1pt, mark size=3.5pt, mark=x, mark options={solid, mycolor2}]
  table[row sep=crcr]{%
23.676131	8.36025235110282e-05\\
46.514127	4.18251645974182e-05\\
93.268766	2.09184785678974e-05\\
190.321012	1.04606305340334e-05\\
};
\addlegendentry{\textsc{le}}

\addplot [color=mycolor3, dashed, line width=1.1pt, mark size=3.5pt, mark=+, mark options={solid, mycolor3}]
  table[row sep=crcr]{%
25.59576	0.000318901204945739\\
51.50993	0.000159392645270882\\
103.580812	7.96819503724432e-05\\
207.290615	3.98374972442656e-05\\
};
\addlegendentry{\textsc{ee}}

\addplot [color=mycolor9, dashed, line width=1.1pt, mark size=3.5pt, mark=asterisk, mark options={solid, mycolor9}]
  table[row sep=crcr]{%
7.926463	0.00107712022909256\\
12.456199	0.00053880902258037\\
24.057095	0.000269458242150756\\
47.316428	0.000134742644838531\\
};
\addlegendentry{\textsc{bfe}}

\addplot [color=mycolor1, line width=1.1pt, mark size=3.5pt, mark=o, mark options={solid, mycolor1}]
  table[row sep=crcr]{%
1.212272	6.37364756220927e-05\\
2.630004	3.1841847933755e-05\\
4.929396	1.59143950657671e-05\\
9.77229899999999	7.95563088396029e-06\\
};
\addlegendentry{\textsc{sle} $\mu$-mode}

\addplot [color=mycolor2, line width=1.1pt, mark size=3.5pt, mark=x, mark options={solid, mycolor2}]
  table[row sep=crcr]{%
0.854913	5.06648788040468e-05\\
1.759183	2.53143317817367e-05\\
2.894102	1.26526978901009e-05\\
6.177308	6.32528882897452e-06\\
};
\addlegendentry{\textsc{le} $\mu$-mode}
\end{axis}
\end{tikzpicture}%
%
%
\definecolor{mycolor5}{rgb}{0.46600,0.67400,0.18800}%
\definecolor{mycolor6}{rgb}{0.30100,0.74500,0.93300}%
\definecolor{mycolor7}{rgb}{0.63500,0.07800,0.18400}%
\definecolor{mycolor8}{rgb}{255,0,205}%
\definecolor{mycolor10}{RGB}{255,120,0}%
\begin{tikzpicture}

\begin{axis}[%
width=2.2in,
height=2.2in,
at={(0.959in,0.568in)},
scale only axis,
xmode=log,
xmin=0.02,
xmax=200,
xminorticks=true,
xlabel style={font=\color{white!15!black}},
xlabel={Wall-clock time},
ymode=log,
ymin=1e-7,
ymax=7e-3,
yminorticks=true,
ylabel style={font=\color{white!15!black}},
ylabel={Error},
axis background/.style={fill=white},
title style={font=\bfseries},
legend style={at={(0.02,0.98)},legend cell align=left, anchor=north west, font=\scriptsize, align=left, draw=white!15!black, legend columns=1}
]
\addplot [color=mycolor5, dashed, line width=1.1pt, mark size=3.5pt, mark=triangle, mark options={solid, mycolor5}]
  table[row sep=crcr]{%
4.779929	0.000199242134051036\\
6.872611	5.00518668768234e-05\\
13.318199	1.25450596162728e-05\\
26.391141	3.1404621423507e-06\\
};
\addlegendentry{\textsc{sl2}}

\addplot [color=mycolor7, dashed, line width=1.1pt, mark size=3.5pt, mark=triangle, mark options={solid, rotate=90, mycolor7}]
  table[row sep=crcr]{%
5.39108800000001	1.23412072778272e-05\\
9.004942	3.11791546298527e-06\\
17.86827	7.83555480015955e-07\\
35.642441	1.96358830350465e-07\\
};
\addlegendentry{\textsc{l2a}}

\addplot [color=mycolor6, dashed, line width=1.1pt, mark size=3.5pt, mark=triangle, mark options={solid, rotate=270, mycolor6}]
  table[row sep=crcr]{%
3.663248	0.000110814104154194\\
6.401514	2.79146867972098e-05\\
12.596095	7.00603719611751e-06\\
25.158589	1.75504646766175e-06\\
};
\addlegendentry{\textsc{erk2p1}}
\addplot [color=teal, dashed, line width=1.1pt, mark size=3.5pt, mark=o, mark options={solid, teal}]
  table[row sep=crcr]{%
2.23658	3.56064295158679e-05\\
3.519931	8.81339412190728e-06\\
6.839474	2.19236128608225e-06\\
13.596233	5.46666027656426e-07\\
};
\addlegendentry{\textsc{erbe}}

\addplot [color=mycolor10, dashed, line width=1.1pt, mark size=3.5pt, mark=square, mark options={solid, mycolor10}]
  table[row sep=crcr]{%
3.814792	3.54625458730456e-05\\
5.078465	8.95316674151588e-06\\
7.42563699999999	2.25003210703449e-06\\
11.696275	5.64049892559223e-07\\
};
\addlegendentry{\textsc{imex2}}

\addplot [color=mycolor5, line width=1.1pt, mark size=3.5pt, mark=triangle, mark options={solid, mycolor5}]
  table[row sep=crcr]{%
0.240523	1.37247433792116e-05\\
0.436755	3.30758972197615e-06\\
0.821155	8.11405815993641e-07\\
1.64709	2.00961677046773e-07\\
};
\addlegendentry{\textsc{sl2} $\mu$-mode}

\addplot [color=mycolor7, line width=1.1pt, mark size=3.5pt, mark=triangle, mark options={solid, rotate=90, mycolor7}]
  table[row sep=crcr]{%
0.150253	9.62946157173456e-06\\
0.279444	3.20471448920401e-06\\
0.589413	9.31242481904633e-07\\
1.064283	2.50525901126287e-07\\
};
\addlegendentry{\textsc{l2a} $\mu$-mode}

\end{axis}
\end{tikzpicture}%
  \caption{Results for the simulations of problem~\eqref{eq:adr3d}
  (dashed lines), rewritten as equation~\eqref{eq:adr3dstab}
    (solid lines), with $b=-0.01$ and different integrators of first order
    schemes (left, number of time
    steps $m=2^{\ell_1}$ with $\ell_1=9,\ldots,12$) and second order schemes
    (right, number of time steps
  $m=2^{\ell_2}$ with $\ell_2=5,\ldots,8$).}
  \label{fig:cpudiagadr3d}
\end{figure}
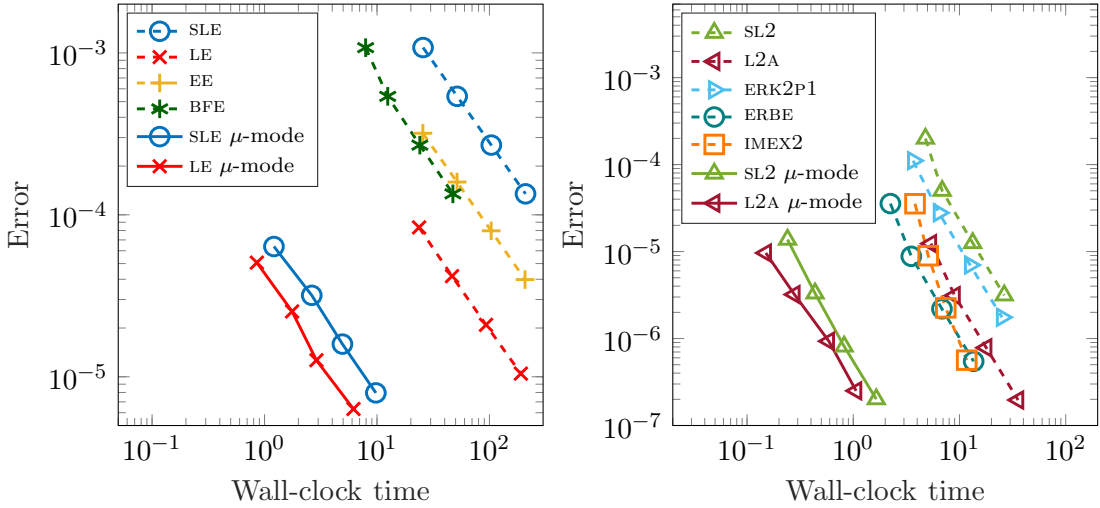

First of all, concerning the schemes of first order, we observe that in the
original formulation (i.e., the dashed lines in the plot)
the Lawson--Euler scheme is the one which performs best. Indeed,
comparing it with the other exponential methods, it is the one which reaches
the smallest error, with basically the same computational time.
The backward-forward Euler method performs slightly better in terms of
wall-clock
time, but the error is larger than both the exponential Euler method and
the Lawson--Euler scheme. Hence, overall, it is not the preferred method.
In any case, we observe that the proposed approach, i.e., solving instead
equation~\eqref{eq:adr3dstab} with $\mu$-mode techniques (solid lines) is
effective. Indeed, in this formulation both the Lawson--Euler and the stabilized
Lawson--Euler methods perform better than the corresponding counterparts,
both in terms of error and of wall-clock time, with a slight
advantage for the former.
Similar considerations can be drawn for the second order schemes. In fact,
using the proposed approach, the Lawson2a
scheme performs best, with a considerable advantage in performance
in the $\mu$-mode realization.

We then repeat the experiment by increasing the magnitude of advection parameter, i.e.,
setting it to $b=-1$. The results are collected in
Figure~\ref{fig:cpudiagadr3dadv}.
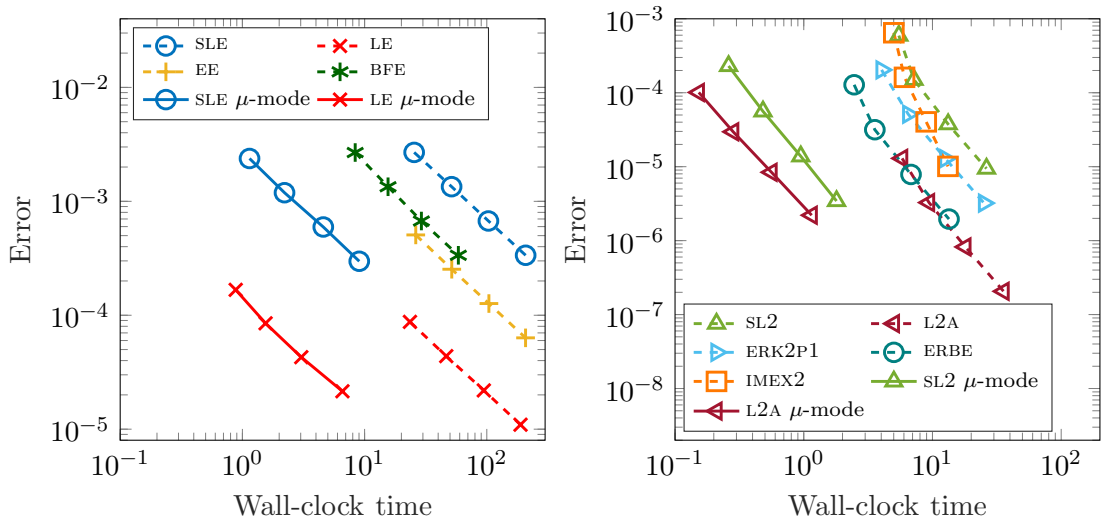
\begin{figure}[htb!]
  \centering
%
%
\definecolor{mycolor1}{rgb}{0.00000,0.44700,0.74100}%
\definecolor{mycolor2}{rgb}{255,0,0}%
\definecolor{mycolor3}{rgb}{0.92900,0.69400,0.12500}%
\definecolor{mycolor9}{RGB}{0,101,0}%
\begin{tikzpicture}

\begin{axis}[%
width=2.2in,
height=2.2in,
at={(0.959in,0.568in)},
scale only axis,
xmode=log,
xmin=0.1,
xmax=300,
xminorticks=true,
xlabel style={font=\color{white!15!black}},
xlabel={Wall-clock time},
ymode=log,
ymin=8e-6,
ymax=4e-2,
yminorticks=true,
ylabel style={font=\color{white!15!black}},
ylabel={Error},
axis background/.style={fill=white},
title style={font=\bfseries},
legend style={at={(0.03,0.98)}, anchor=north west,legend cell align=left, font=\scriptsize, align=left, draw=white!15!black, legend columns=2}
]
\addplot [color=mycolor1, dashed, line width=1.1pt, mark size=3.5pt, mark=o, mark options={solid, mycolor1}]
  table[row sep=crcr]{%
25.438148	0.00268820880068462\\
51.517292	0.00134442741127158\\
103.101919	0.000672286930716923\\
207.865247	0.000336160454507394\\
};
\addlegendentry{\textsc{sle}}

\addplot [color=mycolor2, dashed, line width=1.1pt, mark size=3.5pt, mark=x, mark options={solid, mycolor2}]
  table[row sep=crcr]{%
23.500029	8.74855741255474e-05\\
46.578932	4.37686074984193e-05\\
94.514226	2.18909369086836e-05\\
188.776346	1.09473011414144e-05\\
};
\addlegendentry{\textsc{le}}

\addplot [color=mycolor3, dashed, line width=1.1pt, mark size=3.5pt, mark=+, mark options={solid, mycolor3}]
  table[row sep=crcr]{%
26.217854	0.000507076254462362\\
51.66737	0.000253436274219673\\
104.03661	0.000126692254473531\\
207.545419	6.33392822251279e-05\\
};
\addlegendentry{\textsc{ee}}

\addplot [color=mycolor9, dashed, line width=1.1pt, mark size=3.5pt, mark=asterisk, mark options={solid, mycolor9}]
  table[row sep=crcr]{%
8.37649	0.00268173217211499\\
15.554716	0.00134278411108726\\
29.160018	0.000671872793198899\\
58.51391	0.000336056614389178\\
};
\addlegendentry{\textsc{bfe}}

\addplot [color=mycolor1, line width=1.1pt, mark size=3.5pt, mark=o, mark options={solid, mycolor1}]
  table[row sep=crcr]{%
1.143853	0.00237663440242041\\
2.211812	0.00118992720863273\\
4.591547	0.000595366794252353\\
9.042955	0.000297784405186162\\
};
\addlegendentry{\textsc{sle} $\mu$-mode}

\addplot [color=mycolor2, line width=1.1pt, mark size=3.5pt, mark=x, mark options={solid, mycolor2}]
  table[row sep=crcr]{%
0.881744	0.00016714841187133\\
1.547949	8.4970683175008e-05\\
3.033714	4.28468395634164e-05\\
6.587659	2.15126658317488e-05\\
};
\addlegendentry{\textsc{le} $\mu$-mode}

\end{axis}
\end{tikzpicture}%
%
%
\definecolor{mycolor5}{rgb}{0.46600,0.67400,0.18800}%
\definecolor{mycolor6}{rgb}{0.30100,0.74500,0.93300}%
\definecolor{mycolor7}{rgb}{0.63500,0.07800,0.18400}%
\definecolor{mycolor8}{rgb}{255,0,205}%
\definecolor{mycolor10}{RGB}{255,120,0}%
\begin{tikzpicture}

\begin{axis}[%
width=2.2in,
height=2.2in,
at={(0.959in,0.568in)},
scale only axis,
xmode=log,
xmin=0.1,
xmax=200,
xminorticks=true,
xlabel style={font=\color{white!15!black}},
xlabel={Wall-clock time},
ymode=log,
ymin=2e-09,
ymax=0.001,
yminorticks=true,
ylabel style={font=\color{white!15!black}},
ylabel={Error},
axis background/.style={fill=white},
title style={font=\bfseries},
legend style={at={(0.02,0.02)}, anchor=south west,legend cell align=left, font=\scriptsize, align=left, draw=white!15!black, legend columns=2}
]
\addplot [color=mycolor5, dashed, line width=1.1pt, mark size=3.5pt, mark=triangle, mark options={solid, mycolor5}]
  table[row sep=crcr]{%
5.479206	0.000592709698642253\\
7.189398	0.000149716438861676\\
13.234701	3.76042676665579e-05\\
26.290358	9.42214541359262e-06\\
};
\addlegendentry{\textsc{sl2}}

\addplot [color=mycolor7, dashed, line width=1.1pt, mark size=3.5pt, mark=triangle, mark options={solid, rotate=90, mycolor7}]
  table[row sep=crcr]{%
5.810147	1.29547754885674e-05\\
9.40021100000001	3.27043314795231e-06\\
18.018093	8.22005060875438e-07\\
35.677572	2.06402109147391e-07\\
};
\addlegendentry{\textsc{l2a}}


\addplot [color=mycolor6, dashed, line width=1.1pt, mark size=3.5pt, mark=triangle, mark options={solid, rotate=270, mycolor6}]
  table[row sep=crcr]{%
4.023071	0.000203728635110492\\
6.48122299999999	5.12422236701668e-05\\
12.602713	1.2849458584198e-05\\
25.148889	3.21653923971702e-06\\
};
\addlegendentry{\textsc{erk2p1}}

\addplot [color=teal, dashed, line width=1.1pt, mark size=3.5pt, mark=o, mark options={solid, teal}]
  table[row sep=crcr]{%
2.463399	0.00012812809672794\\
3.566606	3.16198377051595e-05\\
6.812385	7.85318915325016e-06\\
13.439096	1.95741663318208e-06\\
};
\addlegendentry{\textsc{erbe}}

\addplot [color=mycolor10, dashed, line width=1.1pt, mark size=3.5pt, mark=square, mark options={solid, mycolor10}]
  table[row sep=crcr]{%
5.009663	0.000647497687360482\\
6.06461	0.000161632319547973\\
8.94076199999999	4.03864301541777e-05\\
13.1919	1.00939641526294e-05\\
};
\addlegendentry{\textsc{imex2}}

\addplot [color=mycolor5, line width=1.1pt, mark size=3.5pt, mark=triangle, mark options={solid, mycolor5}]
  table[row sep=crcr]{%
0.260655	0.000230076903649126\\
0.482324	5.60027622282653e-05\\
0.948382	1.3822232329729e-05\\
1.797883	3.43456773072877e-06\\
};
\addlegendentry{\textsc{sl2} $\mu$-mode}

\addplot [color=mycolor7, line width=1.1pt, mark size=3.5pt, mark=triangle, mark options={solid, rotate=90, mycolor7}]
  table[row sep=crcr]{%
0.152925	0.000100984715414219\\
0.282913	2.96470083154809e-05\\
0.561737	8.40036139833613e-06\\
1.152559	2.20578505394479e-06\\
};
\addlegendentry{\textsc{l2a} $\mu$-mode}

\end{axis}

\end{tikzpicture}%
  \caption{Results for the simulations of problem~\eqref{eq:adr3d}
  (dashed lines), eventually rewritten as equation~\eqref{eq:adr3dstab}
    (solid lines), with $b=-1$ and different integrators  of first order
    (left, number of time
  steps $m=2^{\ell_1}$ with $\ell_1=9,\ldots,12$) and
  second order (right, number of time steps
  $m=2^{\ell_2}$ with $\ell_2=5,\ldots,8$).}
  \label{fig:cpudiagadr3dadv}
\end{figure}
In this case, we first of all note that the stabilized
schemes applied to equation~\eqref{eq:adr3dstab} show errors which are either
comparable (stabilized Lawson--Euler) or
even lower (stabilized Lawson2) with respect to their counterparts
in the original formulation. This is not true for the Lawson--Euler and the
Lawson2a schemes, which in fact increase the errors when employing the proposed
approach.
Nevertheless, the gain in computational time is so considerable that
overall the $\mu$-mode implementation is the preferred one.
In particular, the methods which perform best are the Lawson--Euler scheme
and the Lawson2a method for first and second order, respectively.

\section{Conclusion}\label{sec:concstab}
{\color{black}In this manuscript, we presented an effective acceleration 
approach to numerically solve semilinear advection-diffusion-reaction equations
in the framework of exponential integrators.
The technique is based on a reformulation of the original equation, driven by a
linear stability analysis of a simpler model, and
allows for the employment of ad-hoc efficient methods for the time integration
(FFT or $\mu$-mode based, for instance).
In this context, we also presented two new schemes of Lawson type which have 
improved unconditional stability bounds compared to methods already available
in the literature, and which appear to perform comparably with respect to well-known
integrators in most of the situations.

We conducted numerical examples and performance comparisons on several
semilinear ad\-vection-diffusion-reaction equations in one, two, and 
three space dimensions. The outcomes show the superiority of 
the proposed technique for exponential integrators, always outperforming
straightforward implementations (up to a factor of roughly 15 in the 
three dimensional case) 
and often obtaining better results with respect to popular IMEX schemes.

As further developments, we plan to deeply investigate alternative
choices for the approximation operator $A$ (making it vary
according to the time evolution of the solution by relating it to
the Jacobian of the equation, for instance) as well as to consider nonlinear
diffusion terms, both from a
theoretical and a practical point of view. Also, we intend
to apply the proposed approach to different classes of PDEs arising from science
and engineering problems.}

\section*{Acknowledgments}
M.~C.~and F.~C.~acknowledge partial support from the Program Ricerca di Base
2019 No.~RBVR199YFL
of the University of Verona entitled ``Geometric Evolution of Multi Agent
Systems''.
L.~E.~acknowledges support from the Austrian Science Fund (FWF) --- project id:
P32143-N32.

\section*{Appendix}
  For the convenience of the reader, we list here the integrators that
  have been mentioned and studied in the paper.
  We suppose that the equation under study is given in the following
  abstract form
  \begin{equation*}
    u'(t) = Au(t) + g(t,u(t)) = F(t,u(t)),
  \end{equation*}
  where $A$ is a generic linear operator and $g$ is a nonlinear function.

  \bigskip
  \noindent\textbf{Lawson type exponential integrators}

  \bigskip
  \noindent The \emph{Lawson--Euler} scheme is given by
  \begin{equation*}
    u^{n+1}=\ee^{\tau A}(u^n + \tau g(t_n,u^n)).
  \end{equation*}
  The \emph{stabilized Lawson--Euler} scheme is given by
  \begin{equation*}
    u^{n+1}=u^n+\tau\ee^{\tau A}F(t_n,u^n).
  \end{equation*}
  The \emph{Lawson2a} scheme is given by
  \begin{equation*}
      U = \ee^{\frac{\tau}{2}A}\left(u^n +
           \frac{\tau}{2}g(t_n,u^n)\right), \quad
      u^{n+1} = \ee^{\tau A}u^n +
                 \tau\ee^{\frac{\tau}{2}A}g\left(t_n+\frac{\tau}{2},U\right).
  \end{equation*}
  The \emph{Lawson2b} scheme is given by
  \begin{equation*}
      U = \ee^{\tau A}\left(u^n + \tau g(t_n,u^n)\right), \quad
      u^{n+1} = \ee^{\tau A}\left(u^n + \frac{\tau}{2}g(t_n,u^n)\right)
      + \frac{\tau}{2}g(t_n+\tau,U).
  \end{equation*}
  The \emph{stabilized Lawson2} scheme is given by
  \begin{equation*}
      U = u^n + \alpha\tau\ee^{\alpha\tau A}F(t_n,u^n), \quad
      u^{n+1} = u^n + \tau\ee^{\frac{\tau}{2} A}F(t_n,u^n) +
                 \frac{\tau}{2\alpha}\ee^{\tau A}
                 (g(t_n+\alpha\tau,U)-g(t_n,u^n)).
  \end{equation*}

  \bigskip
  \noindent\textbf{Classical exponential integrators}

  \bigskip
  \noindent The \emph{exponential Euler} scheme is given by
  \begin{equation*}
    u^{n+1} = u^n + \tau \varphi_1(\tau A)F(t_n,u^n).
  \end{equation*}
  The \emph{exponential Runge--Kutta} scheme of second order involving
  the $\varphi_2$ function is given by
  \begin{equation*}
    \begin{aligned}
      U &= u^n + c_2\tau \varphi_1(c_2\tau A)F(t_n,u^n), \\
      u^{n+1} &= u^n + \tau \varphi_1(\tau A)F(t_n,u^n) +
                 \frac{\tau}{c_2}\varphi_2(\tau A)(g(t_n+c_2\tau,U)-g(t_n,u^n)).
    \end{aligned}
  \end{equation*}
  The \emph{exponential Runge--Kutta} scheme of second order involving the
  $\varphi_1$ function is given by
  \begin{equation*}
    \begin{aligned}
      U &= u^n + c_2\tau \varphi_1(c_2\tau A)F(t_n,u^n), \\
      u^{n+1} &= u^n + \tau \varphi_1(\tau A)F(t_n,u^n) +
                 \frac{\tau}{2c_2}\varphi_1(\tau A)(g(t_n+c_2\tau,U)-g(t_n,u^n)).
    \end{aligned}
  \end{equation*}
      {\color{black}The \emph{exponential Rosenbrock--Euler} scheme
        for autonomous problems with $F(u(t))=Au(t)+g(u(t))$ is given by
  \begin{equation*}
    u^{n+1}=u^n+\tau\varphi_1(\tau J_n)F(u^n), \quad J_n = A+\frac{\partial g}{\partial u}(u^n) .
  \end{equation*}
  }

  \bigskip
  \noindent\textbf{IMEX schemes}

  \bigskip
  \noindent The \emph{backward-forward Euler} scheme is given by
  \begin{equation*}
    (I-\tau A )u^{n+1} = u^n + \tau g(t_n,u^n).
  \end{equation*}
  The second order \emph{implicit-explicit2} scheme proposed
  in reference~\cite{WZWS20} is given by
  \begin{equation*}
      \left(I-\frac{\tau}{2} A\right)U = u^n + \frac{\tau}{2} g(t_n,u^n), \quad
      \left(I-\frac{\tau}{2} A\right)u^{n+1} = u^n + \frac{\tau}{2}A u^n +\tau g\left(t_n+\frac{\tau}{2},U\right).
  \end{equation*}

\bibliographystyle{plain}
\bibliography{bibliostab}
\end{document}